\newcommand{\ip}[2]{\langle #1,\, #2\rangle}	% Pairing of measure and function
\newcommand{\cm}{\bm{\nu}}
\newtheorem{theorem}{Theorem}[section]
\newtheorem{proposition}[theorem]{Proposition}
\newtheorem{corollary}[theorem]{Corollary}
\newtheorem{lemma}[theorem]{Lemma}
\newtheorem*{lemma*}{Lemma}
\newtheorem{assumption}{Assumption}
\theoremstyle{definition}
\newtheorem{definition}[theorem]{Definition}
\newtheorem{notation}[theorem]{Notation}
\newtheorem{remark}[theorem]{Remark}
\numberwithin{equation}{section}
\def\Xint#1{\mathchoice
{\XXint\displaystyle\textstyle{#1}}
{\XXint\textstyle\scriptstyle{#1}}
{\XXint\scriptstyle\scriptscriptstyle{#1}}
{\XXint\scriptscriptstyle\scriptscriptstyle{#1}}
\!\int}
\def\XXint#1#2#3{{\setbox0=\hbox{$#1{#2#3}{\int}$ }
\vcenter{\hbox{$#2#3$ }}\kern-.56\wd0}}
\def\intavg{\Xint-}
\newcommand{\dom}{D}
\renewcommand{\geq}{\geqslant}
\renewcommand{\leq}{\leqslant}
\renewcommand{\epsilon}{\varepsilon}
\newcommand{\eps} {\varepsilon}
\renewcommand{\phi}{\varphi}
\newcommand{\R}{\mathbb{R}}
\newcommand{\N}{\mathbb{N}}
\DeclareMathOperator*{\esssup}{ess\, sup}
\newcommand{\ind}{\mathbbm{1}}
\newcommand{\torus}{\mathbb{T}}
\newcommand{\cyl}{\mathcal{T}_{\text{cyl}}}
\newcommand{\Grad}{\nabla}
\DeclareMathOperator{\Div}{div}
\newcommand{\hdiv}{L^2_{\Div}(\dom;U)}
\newcommand{\V}{H^1_{\Div}(\dom;U)}
\newcommand{\Vd}{H^1_{\Div}(\dom;U)}
\newcommand{\Lvec}{L^2(\dom;U)}
\newcommand{\Hvec}{H^1(\dom;U)}
\newcommand{\Hn}{L^2_{\Div}}
\newcommand{\Caratheodory}{\EuScript{H}}
\renewcommand{\geq}{\geqslant}
\renewcommand{\leq}{\leqslant}
\renewcommand{\phi}{\varphi}
\newcommand{\Meas}{\ensuremath{\mathcal{M}}}
\newcommand{\Ypair}[2]{\bigl\langle #1, #2\bigr\rangle}
\newcommand{\Borel}{\mathcal{B}}
\newcommand{\Prob}{\ensuremath{\mathcal{P}}}
\newcommand{\weakto}{\ensuremath{\rightharpoonup}}
\newcommand{\weaksto}{\ensuremath{\overset{*}{\weakto}}}
\newcommand{\vis}{\eps}
\newcommand{\norm}[1]{\left\| #1 \right\|}
 \newcommand{\Leray}{\mathbb{P}}
\newcommand{\bj}{\underline{j}}
\newcommand{\Corrmeas}{\EuScript{L}}
\begin{document}

\title[Vanishing viscosity limit of statistical solutions]{On the vanishing viscosity limit of statistical solutions of the incompressible Navier--Stokes equations}

\date{\today}
\author[U.S. Fjordholm]{Ulrik Skre Fjordholm} \address[Ulrik Skre Fjordholm]{\newline Department of Mathematics \newline
University of Oslo \newline Postboks 1053 Blindern, 0316 Oslo, Norway}
\email[]{ulriksf@math.uio.no}
\urladdr{https://www.mn.uio.no/math/english/people/aca/ulriksf/}
\author[S. Mishra]{Siddhartha Mishra} \address[Siddhartha Mishra]{\newline
Seminar for Applied Mathematics (SAM) \newline ETH Z\"urich
\newline HG G 57.2, R\"amistrasse 101, Z\"urich, Switzerland.}
\email[]{smishra@sam.math.ethz.ch}
\urladdr{http://www.sam.math.ethz.ch/~smishra}
\author[F. Weber]{Franziska Weber}
\address[Franziska Weber]{\newline Department of Mathematical Sciences \newline Carnegie Mellon University \newline 5000 Forbes Avenue, Pittsburgh, PA 15213, USA.}
\email[]{franzisw@andrew.cmu.edu}

\maketitle
\begin{abstract}
We study statistical solutions of the incompressible Navier--Stokes equation and their vanishing viscosity limit. We show that a formulation using correlation measures, which are probability measures accounting for spatial correlations, and moment equations is equivalent to statistical solutions in the Foia\c{s}--Prodi sense.	Under the assumption of weak scaling, a weaker version of Kolmogorov's self-similarity at small scales hypothesis that allows for intermittency corrections, we show that the limit is a statistical solution of the incompressible Euler equations. To pass to the limit, we derive a K\'arm\'an--Howarth--Monin relation for statistical solutions and combine it with the weak scaling assumption and a compactness theorem for correlation measures.
\end{abstract}
\section{Introduction}

The motion of an incompressible viscous fluid can be described by the Navier--Stokes equations
\begin{equation}\label{eq:ns}
\begin{split}
\partial_t u + \Div\bigl(u\otimes u\bigr) + \nabla p &= \vis\Delta u \\
\Div u &= 0 \\
u\big|_{t=0} &= u_0,
\end{split}
\end{equation}
where $u\colon \dom\to U\coloneqq\R^d$ is the fluid velocity and $p\colon \dom\to \R$, the pressure, acting as a Lagrange multiplier to enforce the divergence constraint $\Div u = 0$, and $u_0$ is the initial condition.
Here, we take the spatial domain $\dom$ to be the $d$-dimensional torus $\dom\coloneqq\mathbb{T}^d$, and we denote the phase space by $U\coloneqq\R^d$. The divergence is defined as $\Div u:=\sum_{i=1}^d \partial_{x_i} u^i$ and $\Grad p:=(\partial_{x_1} p,\dots,\partial_{x_d} p)^\top$ is the spatial gradient. The parameter $\vis>0$ denotes the \emph{viscosity} and is proportional to the reciprocal of the Reynolds number. It is well-known that many flows of interest are characterized by high to very-high Reynolds numbers. Hence, one is interested in studying what happens when viscosity $\vis$ equal to zero. In this formal limit $\vis\to 0$, one obtains the incompressible Euler equations, which are the prototypical models for an ideal fluid.

The question of whether the $\vis\to0$ limit is a good approximation of~\eqref{eq:ns} is of great practical relevance and has received considerable attention, both from a physical as well as mathematical point of view. Furthermore, it plays an essential role in computational fluid dynamics, as many numerical methods for the Euler equations, as well as large eddy simulations (LES) for Navier-Stokes equations, can be viewed as discretizations of~\eqref{eq:ns} with $\vis$ of the order of the discretization parameter. 

While in two spatial dimension, the convergence of a sequence of solutions $\{u^\vis\}_{\vis>0}$ of~\eqref{eq:ns} to a solution of the Euler equations has been proved rigorously for many settings, see e.g.,~\cite{Clopeau1998,Nussenzveig2020,Ciampa2021,Constantin2021}, it turns out to be very challenging in 3D.
Leray~\cite{Leray1934} proved in 1934 the existence of ``Leray--Hopf solutions'' of \eqref{eq:ns}, which are weak solutions of~\eqref{eq:ns}, i.e., they satisfy~\eqref{eq:ns} in the sense of distributions and in addition an energy inequality of the form
\begin{equation}
\label{eq:energydetns}
\int_{\dom} |u(t,x)|^2 dx +\vis \int_0^t\int_{\dom}|\Grad u(s,x)|^2 dxds\leq \int_{\dom} |u_0(x)|^2 dx,\quad t\geq 0.
\end{equation}
(Here and in the remainder, we suppress the dependence of $u^\vis$ on $\vis$ for convenience and write just $u$.)
Hence, if the initial data $u_0$ lies in $\hdiv$, such solutions satisfy $u\in L^\infty\big([0,T];L^2_{\Div}(\dom;\R^d)\big)\cap L^2\big([0,T];H^1_{\Div}(\dom;\R^d)\big)$. (Here, $L^2_{\Div}$ and $H^1_{\Div}$ are the weakly divergence free functions in $L^2$ and $H^1$, respectively.) However, as $\vis\to 0$, the $L^2$-bound on the gradient of $u$, that stems from the energy inequality~\eqref{eq:energydetns}, no longer suffices for deriving sufficient compactness of the sequence $\{u\}_{\vis>0}$ in $L^2$---which would be needed to pass to the limit in the nonlinear terms. It appears that, at least as far as global solutions are concerned, there is currently no means of gaining sufficient compactness through other conserved quantities or bootstrapping; in fact, it is unclear if global solutions of higher regularity than the one given by~\eqref{eq:energydetns} exist in 3D. Closely related to this issue is the lack of stability estimates, i.e., well-posedness of Leray-Hopf solutions~\cite{Fefferman2006,Ladyzhenskaya2003}. The main obstruction to better regularity or stability estimates is caused by the nonlinear convective term $(u\cdot \Grad)u$. The role of the nonlinear term and possible instabilities in the Leray-Hopf solutions are often related to the issue of \emph{turbulence} in fluid flows. 

The theory of mathematical turbulence was initiated in the 1930s and 1940s by Taylor, Richardson, Kolmogorov and others, see~\cite{Frisch} and references therein, and has since influenced fluid mechanics, as well as atmospheric sciences and plasma physics heavily. In his sequence of three papers~\cite{K41a,K41b,K41c}, nowadays referred to as \textit{K41}, Kolmogorov took a probabilistic approach to turbulence and formulated basic hypotheses about fluid flow at high Reynolds numbers and derived predictions based on these. Many of these have later been confirmed by experiments. The idea of studying equations~\eqref{eq:ns} in a probabilistic setting
has since been taken up again in many works, in different frameworks, by adding stochastic forcing terms to~\eqref{eq:ns}, see e.g.~\cite{Flandoli1995,Motyl2012,Brzezniak2013}, or taking uncertain or measure-valued initial data, e.g.~\cite{DiPerna1987}. In the latter case, the solution of~\eqref{eq:ns} may not be a function any more but instead a time-parametrized probability measure on the phase space. Global existence of such measure-valued solutions for incompressible flows has been shown in 3D, and even the passage to the limit $\vis\to 0$ can be made rigorous in this case~\cite{DiPerna1987}. However, measure-valued solutions are generally not unique, which can be shown by counterexample even in the case of Burgers' equation~\cite{Fjordholm2017}. Hence, measure-valued solutions are too broad a solution concept to resolve the problem of non-uniqueness, and more information or constraints need to be added.

To overcome this, in~\cite{FLM17}, it was suggested to take into account the (time) evolution of all possible multi-point spatial correlations. Instead of a single probability measure on the phase space $U$, such a \emph{statistical solution} is a family of probability measures on the phase space and products of the phase space $U^k$, for $k\in\N$, corresponding to the multi-point correlations. Hence, one can interpret the solution as a measure-valued solution augmented with information about higher order spatial correlations.
From a practical point of view, this approach is very natural, as often only averaged quantities of interest of the fluid flow can be observed. Moreover, it is also in line with Kolmogorov's turbulence theory, as this theory studies statistical properties of the fluid and makes predictions about these.
The system of equations that arises for the higher order correlations is also known as the  \emph{Friedman--Keller infinite chain of moment equations}~\cite{Keller1924,Vishik1988} and finite closure relation for this infinite family of equations have been studied for small and large Reynolds numbers in~\cite{Fursikov1990,Fursikov1991,Fursikov1987,Fursikov1994}.

An alternative point of view in this context, is to consider instead probability measures on a space of suitable initial conditions; in the case of~\eqref{eq:ns} this would be $\hdiv$. Equation~\eqref{eq:ns} is then interpreted as a Liouville equation on an infinite dimensional function space and the solution is a mapping assigning to each time $t$ a probability measure on $\hdiv$. This setting was first considered by Prodi~\cite{Prodi1961} and later on extensively studied by Foia\c{s} and collaborators~\cite{Foias1972,Foias1973,FoiasProdi1976,FoiasTemam2001}, see also~\cite{Hopf1952}. A closely related notion of statistical solutions was studied by Vishik and Fursikov~\cite{Vishik1988}. Foia\c{s} and his collaborators proved existence of such solutions in 2D and 3D, uniqueness in 2D, and further properties related to turbulence~\cite{FoiasTemam2001}. The relations between the Foia\c{s} and Prodi notion of statistical solutions and the Vishik-Fursikov version were explored in~\cite{Foias2013,Bronzi2014,Bronzi2016}. The latter work etends the notion of statistical solutions to other relevant PDEs in fluid mechanics. 

Given this plurality of definitions of statistical solutions, it is natural to examine, if and under what conditions, these solution concepts are equivalent. \emph{The first goal of this paper is to prove that both these concepts of statistical solutions of the incompressible Navier-Stokes equations \eqref{eq:ns} are equivalent} as long as a statistical version of the of the energy inequality~\eqref{eq:energydetns} holds. 

\emph{The second and main goal of this paper is to investigate the vanishing viscosity limit of the statistical solutions of incompressible Navier-Stokes equations}. Under an \emph{weak scaling} assumption on the Navier-Stokes statistical solutions, we will use compactness criteria, presented recently in \cite{systemspaper}, to prove that vanishing viscosity limits of the statistical solutions of Navier-Stokes equations are statistical solutions of the incompressible Euler equations. 

Our weak scaling assumption is a significantly weaker version of the scaling hypothesis of Kolmogorov's 1941 theory and allows for \emph{intermittent} corrections. Our main technical tool is a statisical version of the well-known K\'arm\'an--Howarth--Monin relation~\cite{Frisch,deKarman1938,MoninYaglom}, that relates the evolution of 2-point correlations to the longitudinal structure function $S_{\|}^3$, which is, roughly speaking, defined as
\begin{equation*}
S_{\|}^3(|\ell|)\coloneqq\Bigl\langle \Bigl((u(x+\ell)-u(x))\cdot \widehat{\ell}\Bigr)^3\Bigr\rangle,\quad \widehat{\ell}\coloneqq\frac{\ell}{|\ell|}.
\end{equation*}
Here $\langle\cdot\rangle$ is a suitable average of the flow. 

Thus, by characterizing this vanishing viscosity limit, we establish a rigorous relationship between the incompressible Navier-Stokes and Euler equations, while accommodating physically observed facts about turbulent flows in this description.

The remainder of this article is organized as follows: In Section~\ref{sec:prelim}, we introduce the concept of correlation measures and in Section~\ref{sec:statsol} we show the equivalence of statistical solutions as introduced by Foia\c{s} and Prodi with families of correlation measures satisfying the Friedman--Keller chain of moment equations. Then in Section~\ref{sec:VV}, we consider the passage to the limit $\vis\to 0$ and conclude with an appendix with technical results.

\section{Correlation measures}\label{sec:prelim}

In this section, we recall the definition of correlation measures and some important properties of them from~\cite{FLM17,systemspaper}.
We start by introducing the necessary notation.
\subsection{Notation}
For $k\in\N$, $k\geq 1$, we denote the tensor products
\begin{equation*}
D^k=\underbrace{D\times\dots\times D}_{k\, \textrm{times}},\quad %B^k\coloneqq\underbrace{B\times\dots\times B}_{k\, \textrm{times}},\quad
U^k=\underbrace{U\otimes\dots\otimes U}_{k\, \textrm{times}}.
\end{equation*}
If $X$ is a topological space then we let $\Borel(X)$ denote the Borel $\sigma$-algebra on $X$, we let $\Meas(X)$ denote the set of signed Radon measures on $(X,\Borel(X))$, and we let $\Prob(X)\subset\Meas(X)$ denote the set of all probability measures on $(X, \Borel(X))$, i.e.~all $0\leq\mu\in\Meas(X)$ with $\mu(X)=1$ (see e.g.~\cite{ambrosio_gradient_flows,Bil08,Kle}). For $k\in\N$ and a multiindex $\alpha\in \{0,1\}^k$ we write $|\alpha|=\alpha_1+\dots+\alpha_k$ and $\bar{\alpha} = \mathbbm{1}-\alpha=(1-\alpha_1,\dots,1-\alpha_k)$, and we let $x_{\alpha}$ be the vector of length $|\alpha|$ consisting of the elements $x_i$ of $x$ for which $\alpha_i$ is non-zero. For a vector $x=(x_1,\dots,x_k)$ we write $\widehat{x}_i = (x_1,\dots,x_{i-1},x_{i+1},\dots,x_k)$. For a vector $\xi=(\xi_1,\dots,\xi_k)$ we write $|\xi^\alpha| = |\xi_1|^{\alpha_1}\cdots|\xi_k|^{\alpha_k}$ with the convention $0^0=1$.

\subsubsection{Carath\'eodory functions}
If $E$ and $V$ are Euclidean spaces then a measurable function $g\colon E\times V\to\R$ is called a \emph{Carath\'eodory function} if $\xi\mapsto g(x,\xi)$ is continuous for a.e.~$y\in E$ and $y\mapsto g(y,\xi)$ is measurable for every $\xi\in V$ (see e.g.~\cite[Section 4.10]{AliBor}). Given $k\in\N$ and a Carath\'eodory function $g=g(x,\xi)\colon D^k\times U^k\to\R$ we define the functional $L_{g} \colon  L^p(D;U) \to \R$ by
\begin{equation}\label{eq:Lgdef}
L_{g}(u) \coloneqq \int_{D^k} g(x_1,\dots,x_k,u(x_1),\dots,u(x_k))\,dx.
\end{equation}
(It is not obvious that $L_g$ is continuous, or even well-defined; see \cite{FLM17}.) We denote the set of Carath\'eodory functions depending on space and time by $\Caratheodory^k_0([0,T),D;U) \coloneqq L^1([0,T)\times D^k;C_0(U^k))$ and its dual space by $\Caratheodory^{k*}_0([0,T),D;U) \coloneqq L^\infty_w([0,T)\times D^k;\Meas(U^k))$ (see e.g.~\cite{Bal89}).

In the following, we will focus on a specific type of Carath\'eodory functions. In particular, for $p\geq1$ we let $\Caratheodory^{k,p}([0,T],D;U)$ denote the space of Carath\'eodory functions $g\colon [0,T]\times D^k\times U^k \to \R$ satisfying
\begin{equation}\label{eq:gboundedt}
|g(t,x,\xi)| \leq \sum_{\alpha\in\{0,1\}^k} \phi_{|\bar{\alpha}|}(t,x_{\bar{\alpha}})|\xi^\alpha|^p \qquad \forall\ x\in D^k,\ \xi\in U^k
\end{equation}
for nonnegative functions $\phi_i\in L^\infty([0,T]; L^1(D^i))$, $i=0,1,\dots,k$. We let $\Caratheodory^{k,p}_1([0,T],D;U)\subset \Caratheodory^{k,p}([0,T],D;U)$ denote the subspace of functions $g$ satisfying the local Lipschitz condition
\begin{equation}\label{eq:glipschitzt}
\begin{split}
\big|g(t,x,\zeta)-g(t,y,\xi)\big| \leq &\ \psi(t)\sum_{i=1}^k|\zeta_i-\xi_i|\max\big(|\xi_i|,|\zeta_i|\big)^{p-1} h(t,\widehat{x}_i,\widehat{\xi}_i) \\
&+ O(|x-y|)\widetilde{h}(t,x,\xi)
\end{split}
\end{equation}
for every $x\in D^k$, $y\in B_r(x)$ for some $r>0$, for some nonnegative $h \in \Caratheodory^{k-1,p}([0,T],D;U)$ and $0\leq \psi(t)\in L^{\infty}([0,T])$ and some $\widetilde{h}\in\Caratheodory^{k,p}([0,T],D;U)$. (Note that the term $\widetilde{h}$ was not present in \cite[Definition 2.2]{systemspaper}, but one can generalize the results of that paper to include such a term.)
%\todo[inline]{USF: I had to include this extra term $\widetilde{h}$ to make Theorem \ref{thm:conv} work without repeating the whole argument in \cite[Theorem 2.13]{systemspaper}.}

We also denote for a parametrized probability measure $\nu^k\in L^\infty_w([0,T)\times D^k;\Meas(U^k))$ and a Carath\'eodory function $g$ the pairing
\[
\Ypair{\nu^k}{g}_{\Caratheodory^k} = \int_{D^k} \Ypair{\nu^k_{t,x}}{g(t,x)}\,dx
\]
(where $\Ypair{\nu^k_x}{g(t,x)} = \int_{U^k} g(t,x,\xi)\,d\nu^k_{t,x}(\xi)$  is the usual duality pairing between Radon measures $\Meas(U^k)$ and continuous functions $C_0(U^k)$).
\subsection{Definitions}

We are now in a position to define time-dependent correlation measures.

\begin{definition}\label{def:timdeptrunccorrmeas}
	A \emph{time-dependent correlation measure} is a collection $\cm=(\nu^1,\nu^2,\dots)$ of functions $\nu^k\in \Caratheodory^{k*}_0([0,T),D;U)$ such that
	\begin{enumerate}[{\rm (i)}]  %[label=\it(\roman*)]

		\item $\nu^k_{t,x} \in \Prob(U^k)$ for a.e.\ $(t,x)\in [0,T]\times D^k$, and the map $x \mapsto \Ypair{\nu^k_{t,x}}{f}$ is measurable for every $f\in C_b(U^k)$ and almost every $t\in [0,T]$. (In other words, $\nu^k_t$ is a Young measure from $D^k$ to $U^k$.)
		\item \textit{$L^p$ integrability:}
		\begin{equation}\label{eq:corrlpboundtimedep}
		\esssup_{t\in[0,T)}\left(\int_{D} \Ypair{\nu^1_{t,x}}{|\xi|^p}\,dx\right)^{1/p} \leq c< +\infty
		\end{equation}
		\item\textit{Diagonal continuity (DC):}
		\begin{equation}\label{eq:timedepdc}
		\int_0^{T'}\omega_r^p\big(\nu^2_t\big)\,dt \to 0 \qquad \text{ as } r\to0 \text{ for all } T'\in(0,T),
		\end{equation}
		where
		\[
		\omega_r^p(\nu^2_t) \coloneqq \int_D \intavg_{B_r(x)} \Ypair{\nu^2_{t,x,y}}{|\xi_1-\xi_2|^p}\,dy\,dx
		\]
		is called the \emph{modulus of continuity} of $\cm$.
	\end{enumerate}
	We denote the set of all time-dependent correlation measures by $\Corrmeas^{p}([0,T),D;U)$.
\end{definition}

In~\cite{systemspaper} (and see~\cite{FLM17} for a time-independent version), the following equivalence between time-dependent correlation measures and parametrized probability measures on $L^p(D)$ was proved:

\begin{theorem}\label{thm:equivalencemeasures}
	For every time-dependent correlation measure $\cm\in\Corrmeas^p([0,T),D;U)$ there is a unique {\rm(}up to subsets of $[0,T)$ of Lebesgue measure $0${\rm)} map $\mu\colon [0,T)\to\Prob(L^p(D;U))$ such that
	\begin{enumerate}[{\rm (i)}] %[label=\it (\roman*)]
		\item the map
		\begin{equation}\label{eq:mumeasurable}
		t\mapsto\Ypair{\mu_t}{L_g} = \int_{L^p} \int_{D^k}g(x,u(x))\,dx\,d\mu_t(u)
		\end{equation}
		is measurable for all $g\in\Caratheodory^k_0(D;U)$,
		\item $\mu$ is $L^p$-bounded:
		\begin{equation}\label{eq:mulpbound}
		\esssup_{t\in[0,T)}\int_{L^p}\|u\|_{L^p}^p\,d\mu_t(u) \leq c^p < \infty
		\end{equation}
		\item $\mu$ is dual to $\cm$: the identity
		\begin{equation}\label{eq:corrmeastimeduality}
		\int_{D^k}\Ypair{\nu_t^k}{g(x)}\,dx = \int_{L^p} \int_{D^k}g(x,u(x))\,dx\,d\mu_t(u)
		\end{equation}
		holds for a.e.~$t\in[0,T)$, every $g\in\Caratheodory^k_0(D;U)$ and all $k\in\N$.
	\end{enumerate}
	Conversely, for every $\mu\colon [0,T)\to\Prob(L^p(D;U))$
	satisfying (i) and (ii), there is a unique correlation measure $\cm\in\Corrmeas^p([0,T),D;U)$ satisfying (iii).
\end{theorem}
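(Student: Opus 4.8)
The plan is to reduce the time-dependent statement to the time-independent equivalence theorem of \cite{FLM17} by freezing $t$ and applying that result slicewise. For a.e.\ fixed $t\in[0,T)$, the slice $\cm_t\coloneqq(\nu^1_t,\nu^2_t,\dots)$ is a candidate time-independent correlation measure, and the theorem of \cite{FLM17} produces a unique $\mu_t\in\Prob(L^p(D;U))$ dual to $\cm_t$. The remaining work is threefold: (a) confirm that a.e.\ slice is a genuine correlation measure; (b) assemble the pointwise family $t\mapsto\mu_t$ into an object satisfying the measurability and boundedness requirements (i)--(ii); and (c) run the same argument in reverse for the converse.

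For the forward direction, condition \eqref{eq:corrlpboundtimedep} immediately supplies the $L^p$-hypothesis of the time-independent theorem for a.e.\ $t$, with the same constant $c$. The delicate point is diagonal continuity: the time-independent result requires $\omega^p_r(\nu^2_t)\to0$ as $r\to0$ for the individual $t$ at hand, whereas \eqref{eq:timedepdc} only controls the \emph{time integral} of this quantity. I would recover the pointwise statement by selecting a sequence $r_n\downarrow0$ with $\int_0^{T'}\omega^p_{r_n}(\nu^2_t)\,dt\leq 2^{-n}$; monotone convergence then yields $\sum_n\omega^p_{r_n}(\nu^2_t)<\infty$, hence $\omega^p_{r_n}(\nu^2_t)\to0$, for a.e.\ $t$, which suffices to run the reconstruction of \cite{FLM17} along this sequence. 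Once $\mu_t$ is obtained for a.e.\ $t$, property (i) is a consequence of the duality \eqref{eq:corrmeastimeduality}: for any $g\in\Caratheodory^k_0(D;U)$ one has $\Ypair{\mu_t}{L_g}=\int_{D^k}\Ypair{\nu^k_t}{g}\,dx$, and the right-hand side is measurable in $t$ because $\nu^k\in L^\infty_w([0,T)\times D^k;\Meas(U^k))$ is weak-$*$ measurable and the $x$-integration preserves measurability by Tonelli. Property (ii) follows by testing \eqref{eq:corrmeastimeduality} at $k=1$ against the $p$-th moment $\xi\mapsto|\xi|^p$ (after a routine truncation, since $|\xi|^p\notin C_0(U)$), giving $\int_{L^p}\norm{u}_{L^p}^p\,d\mu_t=\int_D\Ypair{\nu^1_t}{|\xi|^p}\,dx$, whose essential supremum in $t$ is at most $c^p$ by \eqref{eq:corrlpboundtimedep}.

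For the converse, given $\mu\colon[0,T)\to\Prob(L^p(D;U))$ satisfying (i)--(ii), I would invoke the converse half of the time-independent theorem at each $t$ to define $\nu^k_t$, and then verify the three defining properties of $\Corrmeas^p([0,T),D;U)$. The Young-measure and weak-$*$ measurability properties and the bound \eqref{eq:corrlpboundtimedep} transfer directly from the slicewise construction together with (i)--(ii). The genuinely new check is the integrated diagonal continuity \eqref{eq:timedepdc}: using the duality one writes $\omega^p_r(\nu^2_t)=\int_D\intavg_{B_r(x)}\int_{L^p}|u(x)-u(y)|^p\,d\mu_t(u)\,dy\,dx$, notes that the $(x,y)$-integral tends to $0$ as $r\to0$ for each fixed $u\in L^p$ by continuity of translation in $L^p$, and passes to the limit by dominated convergence in $(t,u)$ using the envelope $2^p\norm{u}_{L^p}^p$ and the uniform bound (ii). Uniqueness in both directions is inherited from the pointwise uniqueness in \cite{FLM17}, which pins down $\mu_t$ (respectively $\nu^k_t$) for a.e.\ $t$.

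I expect the main obstacle to be the two-way transfer of diagonal continuity between the formulations: extracting the pointwise-in-$t$ hypothesis of \cite{FLM17} from the integrated condition \eqref{eq:timedepdc} in the forward direction (and checking that reconstruction along a single sequence $r_n\to0$ is admissible), and, conversely, promoting pointwise smallness of $\omega^p_r(\nu^2_t)$ back to an integrated limit while justifying the interchange of limit and integration uniformly in $t$. By contrast, the measurability bookkeeping in (i) is routine once the duality identity is available, and the remaining conditions descend directly from their time-independent counterparts.
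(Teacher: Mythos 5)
The paper does not actually prove Theorem~\ref{thm:equivalencemeasures}; it imports it wholesale from \cite{systemspaper}, so there is no in-paper proof to compare against, and your proposal has to be judged on its own merits. Your slicewise reduction to the time-independent equivalence theorem of \cite{FLM17} is the natural strategy and is correct in outline: the duality argument for measurability in (i), the truncation argument for (ii), the dominated-convergence verification of \eqref{eq:timedepdc} in the converse direction, and the inheritance of uniqueness are all handled appropriately.

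Two steps need more than the one line you give them. First, in the forward direction your Borel--Cantelli argument only yields $\omega^p_{r_n}(\nu^2_t)\to0$ along one fixed sequence for a.e.\ $t$, and you leave open whether the reconstruction of \cite{FLM17} can really be run along a single sequence. Rather than auditing that proof, the cleaner fix is to note that for $0<r'\leq r$ one has $\omega^p_{r'}(\nu^2_t)\leq (r/r')^d\,\omega^p_{r}(\nu^2_t)$ (enlarge the averaging ball and use nonnegativity of the integrand), so choosing $r_{n+1}=r_n/2$ gives $\omega^p_r(\nu^2_t)\leq 2^d\,\omega^p_{r_n}(\nu^2_t)$ for $r\in[r_{n+1},r_n]$; sequential convergence then upgrades to $\lim_{r\to0}\omega^p_r(\nu^2_t)=0$ for a.e.\ $t$, which is exactly the hypothesis of the time-independent theorem. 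Second, and more substantively, in the converse direction the claim that weak-$*$ measurability ``transfers directly'' from the slicewise construction is not justified: applying \cite{FLM17} at each fixed $t$ produces, for that $t$, a family $x\mapsto\nu^k_{t,x}$ measurable in $x$, but membership in $\Caratheodory^{k*}_0([0,T),D;U)=L^\infty_w([0,T)\times D^k;\Meas(U^k))$ requires \emph{joint} weak-$*$ measurability in $(t,x)$, which a $t$-by-$t$ construction does not deliver by itself. The standard repair is to define $\nu^k$ globally as the bounded linear functional $g\mapsto\int_0^T\Ypair{\mu_t}{L_{g(t,\cdot)}}\,dt$ on $L^1([0,T)\times D^k;C_0(U^k))$ --- boundedness follows from (i) and (ii) --- and to invoke the duality with $L^\infty_w([0,T)\times D^k;\Meas(U^k))$ to obtain a jointly measurable representative, which one then identifies with the slicewise object for a.e.\ $t$. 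With these two repairs your argument closes.
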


We also have the following ``Compactness'' Theorem for time-dependent correlation measures~\cite[Theorem 2.21]{systemspaper}
\begin{theorem}\label{thm:timedepcmcompactness}
	Let $\cm_n \in \Corrmeas^{p}([0,T),D;U)$ for $n=1,2,\dots$ be a sequence of correlation measures such that
	\begin{align}
	\sup_{n\in\N}\esssup_{t\in[0,T)}\left(\int_{D} \Ypair{\nu^1_{n;t,x}}{|\xi|^p}\,dx\right)^{1/p} \leq c< +\infty	\label{eq:timedepuniformlpbound} \\
	\lim_{r\to0}\limsup_{n\to\infty}\int_0^{T'}\omega_r^p\bigl(\nu^2_{n,t}\bigr)\,dt = 0 \label{eq:timedepuniformdc}
	\end{align}
	for some $c>0$ and all $T'\in[0,T)$. Then there exists a subsequence $(n_j)_{j=1}^\infty$ and some $\cm\in\Corrmeas^{p}([0,T),D;U)$ such that
	\begin{enumerate}[(i)]   %[label=\it (\roman*)]
		\item\label{prop:tdwsconv} $\cm_{n_j} \weaksto \cm$ as $j\to\infty$, that is, $\Ypair{\nu^k_{n_j}}{g}_{\Caratheodory^k} \to \Ypair{\nu^k}{g}_{\Caratheodory^k}$ for every $g\in\Caratheodory^k_0([0,T),D;U)$ and every $k\in\N$
		\item\label{prop:tdlpbound} $\Ypair{\nu^1_t}{|\xi|^p}_{\Caratheodory^1} \leq c^p$ for a.e.~$t\in[0,T)$
		\item\label{prop:tddc} $\int_0^{T'}\omega_r^p\big(\nu^2_t\big)\,dt \leq \liminf_{n\to\infty} \int_0^{T'}\omega_r^p\big(\nu^2_{n,t}\big)\,dt$ for every $r>0$ and $T'\in[0,T)$
		\item\label{prop:tdliminf} for $k\in\N$, let $\phi\in L^1_{\textrm{loc}}([0,T)\times D^k)$ and $\kappa\in C(U^k)$ be nonnegative, and let $g(t,x,\xi) \coloneqq \phi(t,x)\kappa(\xi)$. Then
		\begin{equation}\label{eq:tdnuklimitbound}
		\Ypair{\nu^k}{g}_{\Caratheodory^k} \leq \liminf_{j\to\infty} \Ypair{\nu_{n_j}^k}{g}_{\Caratheodory^k}.
		\end{equation}
		\item \label{prop:strongconv}
		Assume moreover that $D\subset\R^d$ is compact, $T<\infty$ and that $\cm_n$ have uniformly bounded support, in the sense that
		\begin{equation}
		\|u\|_{L^p} \leq R \qquad \text{for $\mu^n_t$-a.e.\ } u\in L^p(D;U) \text{ for every } n\in\N, ~ a.e~t \in (0,T),  \label{eq:tLpLinftybound}
		\end{equation}
		with $\mu_t^n\in \Prob_T(L^p(D;U))$ being dual to $\cm_n$,
		then the following \emph{observables converge strongly:}
		\begin{equation}\label{eq:tnuklimit}
		\lim_{j\to\infty}\int_{D^k} \left|\int_0^T\left(\Ypair{\nu^k_{n_j;t,x}}{g(t,x)} -  \Ypair{\nu^k_{t,x}}{g(t,x)}\right)\,dt\right| \,dx = 0
		\end{equation}
		for every $g \in \Caratheodory^{k,p}_1([0,T],D;U)$.
	\end{enumerate}
\end{theorem}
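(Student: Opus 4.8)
The plan is to extract a weak-$*$ convergent subsequence by a diagonal argument, identify the limit as a correlation measure, deduce the bounds~\ref{prop:tdlpbound}--\ref{prop:tdliminf} from weak-$*$ lower semicontinuity, and finally upgrade weak to strong convergence in~\ref{prop:strongconv} through a Kolmogorov--Riesz compactness argument. For each fixed $k$ the sequence $\nu^k_n$ is bounded in $\Caratheodory^{k*}_0([0,T),D;U)=L^\infty_w([0,T)\times D^k;\Meas(U^k))$: each $\nu^k_{n;t,x}$ is a probability measure, so its total variation equals $1$ and the dual norm is $1$. Since $\Caratheodory^k_0$ is separable, the sequential Banach--Alaoglu theorem gives a weak-$*$ convergent subsequence; applying this successively for $k=1,2,\dots$ and passing to the diagonal produces one subsequence $(n_j)$ along which $\nu^k_{n_j}\weaksto\nu^k$ for all $k$, which is~\ref{prop:tdwsconv}. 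That the limit $\cm=(\nu^1,\nu^2,\dots)$ is again a correlation measure follows because the Young-measure property and the symmetry and consistency relations are linear constraints preserved under weak-$*$ limits, while the uniform $L^p$ bound~\eqref{eq:timedepuniformlpbound} supplies the tightness in $\xi$ that prevents loss of mass and keeps each $\nu^k_{t,x}$ a probability measure.

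The three estimates~\ref{prop:tdlpbound},~\ref{prop:tddc} and~\ref{prop:tdliminf} all reduce to the single lower-semicontinuity statement~\ref{prop:tdliminf}. Given nonnegative $\kappa\in C(U^k)$ and $\phi\in L^1_{\loc}$, one approximates $\kappa$ from below by truncations $\kappa_M\coloneqq\min(\kappa,M)$, cut off to compact support, so that $g_M\coloneqq\phi\kappa_M\in\Caratheodory^k_0$. Weak-$*$ convergence applied to $g_M$, together with $g_M\leq g$ and nonnegativity, gives $\Ypair{\nu^k}{g_M}_{\Caratheodory^k}=\lim_j\Ypair{\nu^k_{n_j}}{g_M}_{\Caratheodory^k}\leq\liminf_j\Ypair{\nu^k_{n_j}}{g}_{\Caratheodory^k}$, and monotone convergence as $M\to\infty$ on the left yields~\eqref{eq:tdnuklimitbound}. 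Specializing to $k=1$, $\phi\equiv1$, $\kappa=|\xi|^p$ gives~\ref{prop:tdlpbound}, and to $k=2$, $\phi(t,x,y)=\tfrac{1}{|B_r(x)|}\ind_{B_r(x)}(y)\ind_{[0,T']}(t)$, $\kappa=|\xi_1-\xi_2|^p$ gives~\ref{prop:tddc}. Combining~\ref{prop:tddc} with the uniform diagonal-continuity hypothesis~\eqref{eq:timedepuniformdc} and letting $r\to0$ shows that $\cm$ itself satisfies diagonal continuity, completing the verification that $\cm\in\Corrmeas^p([0,T),D;U)$.

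The crux is the strong convergence~\ref{prop:strongconv}. Set $\bar h_n(x)\coloneqq\int_0^T\Ypair{\nu^k_{n;t,x}}{g(t,x)}\dd t$ and define $\bar h$ analogously from $\cm$. Weak-$*$ convergence already identifies $\bar h$ as the only possible $L^1(D^k)$-limit of the $\bar h_{n_j}$, so it suffices to show that $\{\bar h_{n_j}\}$ is precompact in $L^1(D^k)$. The support bound~\eqref{eq:tLpLinftybound} and the $p$-growth~\eqref{eq:gboundedt} of $g$ give a uniform $L^\infty$ bound on $\bar h_n$, so on the compact domain $D^k$ the Kolmogorov--Riesz theorem reduces precompactness to a uniform-in-$n$ control of translations. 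Here the local Lipschitz condition~\eqref{eq:glipschitzt} is decisive: translating a coordinate $x_i\mapsto x_i+z$ replaces $u(x_i)$ by $u(x_i+z)$, and the first term of~\eqref{eq:glipschitzt} bounds the resulting increment of $g$ by $\psi(t)\,|u(x_i+z)-u(x_i)|\,\max(|u(x_i)|,|u(x_i+z)|)^{p-1}\,h$, whose integral against $\nu^2_{n,t}$ is controlled precisely by the diagonal-continuity modulus $\omega_{|z|}^p(\nu^2_{n,t})$, while the explicit $O(|x-y|)$ term contributes an $O(|z|)$ error. This produces an estimate
\begin{equation*}
\int_{D^k}\bigl|\bar h_n(x+z)-\bar h_n(x)\bigr|\dd x\ \lesssim\ \int_0^T\omega_{|z|}^p\bigl(\nu^2_{n,t}\bigr)\dd t+|z|,
\end{equation*}
uniform in $n$, and the hypothesis~\eqref{eq:timedepuniformdc} forces the right-hand side to $0$ as $|z|\to0$ uniformly in $n$. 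This is exactly the equicontinuity of translations required by Kolmogorov--Riesz, so $\{\bar h_{n_j}\}$ is precompact; since every $L^1$-limit point equals $\bar h$, the whole subsequence converges strongly, which is~\eqref{eq:tnuklimit}. The main obstacle is precisely this last step: one must engineer the translation estimate so that the Lipschitz structure~\eqref{eq:glipschitzt} of $g$ feeds exactly into the two-point modulus $\omega_{|z|}^p(\nu^2)$, so that the single uniform diagonal-continuity hypothesis~\eqref{eq:timedepuniformdc} is strong enough to close the criterion; the first three parts, by contrast, are soft consequences of weak-$*$ compactness and lower semicontinuity.
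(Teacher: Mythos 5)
The paper does not prove this theorem itself; it imports it verbatim from \cite[Theorem 2.21]{systemspaper}, and your outline follows the same strategy as that reference: Banach--Alaoglu plus a diagonal argument for (\ref{prop:tdwsconv}), truncation and monotone convergence for (\ref{prop:tdliminf}) with (\ref{prop:tdlpbound}) and (\ref{prop:tddc}) as specializations, and a translation-compactness argument for (\ref{prop:strongconv}). Two points deserve attention. First, a minor one: taking $\phi\equiv 1$ in (\ref{prop:tdliminf}) only yields the time-integrated bound $\int_0^T\Ypair{\nu^1_t}{|\xi|^p}_{\Caratheodory^1}\,dt\leq c^pT$; to get the essential-supremum statement (\ref{prop:tdlpbound}) you must localize in time, e.g.\ take $\phi=|I|^{-1}\ind_I$ for small intervals $I$ and apply Lebesgue differentiation.

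Second, and more substantively, the displayed translation estimate in your treatment of (\ref{prop:strongconv}) is not correct as written. The modulus $\omega_r^p(\nu^2_{n,t})$ is an \emph{average} over offsets $y\in B_r(x)$, whereas the left-hand side $\int_{D^k}|\bar h_n(x+z)-\bar h_n(x)|\,dx$ involves a single fixed offset $z$; the fixed-offset structure function $\int_D\Ypair{\nu^2_{n;t,x,x+z}}{|\xi_1-\xi_2|^p}\,dx$ is not controlled by $\omega^p_{|z|}(\nu^2_{n,t})$ in general, since one bad direction can be washed out in the average. What the Lipschitz condition \eqref{eq:glipschitzt} together with H\"older actually gives you, cleanly, is the \emph{averaged} bound
\begin{equation*}
\fint_{B_r(0)}\int_{D^k}\bigl|\bar h_n(x+z)-\bar h_n(x)\bigr|\,dx\,dz\ \lesssim\ \Bigl(\int_0^{T}\omega_{r}^p\bigl(\nu^2_{n,t}\bigr)\,dt\Bigr)^{1/p}+r,
\end{equation*}
(note also the exponent $1/p$ from H\"older, which your display omits). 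This is still enough to conclude, but you must then invoke the averaged-translation variant of the Fr\'echet--Kolmogorov criterion --- e.g.\ compare $\bar h_n$ with its mollification $\bar h_n*\rho_r$, whose $L^1$ distance from $\bar h_n$ is bounded by exactly this averaged quantity, and use the uniform $L^\infty$ bound to get compactness of the mollified family --- rather than the standard fixed-translation version. With that repair, and the observation that identifying the limit as $\bar h$ requires extending the weak-$*$ convergence from $\Caratheodory^k_0$ to test functions of critical $p$-growth (which the uniform support bound \eqref{eq:tLpLinftybound} licenses, since it bounds all higher moments), your argument closes.
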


\section{Statistical solutions}\label{sec:statsol}
The goal of this section is to show that the statistical solutions of Navier--Stokes as introduced by Foia\c{s} and Prodi~\cite{Foias1972,Foias1973,FoiasTemam2001,FoiasProdi1976,Prodi1961} are equivalent to families of correlation measures as introduced in~\cite{FLM17} that satisfy the Friedman--Keller system of moment equations. For the sake of simplicity we will assume that the support of the initial measure $\mu_0$ lies in a bounded set $B\subset \Lvec$, that is,
\begin{equation}\label{eq:initboundedsupp}
\text{supp}(\mu_0)\subset B\subset \Lvec.
\end{equation}

\subsection{The Leray projector}\label{sec:lerayproj}
We recall first that the \emph{Helmholtz--Leray projector}, or simply \emph{Leray projector}, is the linear map $\Leray\colon \Lvec\to \hdiv\coloneqq\{v\in \Lvec : \Div v = 0,\,\int v dx =0\}$  that projects a vector field $f\in \Lvec$ to its divergence free component, that is $f=\Leray f +\Grad \psi_f$ with $\Div (\Leray f)=0$ and $\Leray f, \Grad\psi_f\in \Lvec$. One can show that $\Grad\psi_f$ is orthogonal in $\Lvec$ to any function $u\in \hdiv$,
\begin{equation*}
\int_{\dom}u \cdot\Grad\psi_f\,dx = 0.
\end{equation*}
For functions in the tensor product space $L^2(D^k;U^k)$ we let $\Leray_{x_i}$ denote the Leray projector in the $i$th component, i.e., $\varphi = \Leray_{x_i}\varphi+\Grad_{x_i}\psi_{\varphi,i}$ where $\Div_{x_i}(\Leray_{x_i}\varphi) = 0$, and $\psi_{\varphi,i}\in L^2(D^k; U^{k-1})$ with $\Grad_{x_i}\psi_{\varphi,i}\in L^2(D^k;U^k)$.

\subsection{Definitions}
We will start by recalling the different definitions of statistical solutions introduced in \cite{FoiasTemam2001,Foias2013,FLM17}.

\begin{definition}[Definition 3.2 in \cite{FLM17}]\label{def:fksys} Let $\vis \geq 0$. The \emph{Friedman--Keller system of moment equations}, defined for time-dependent correlation measures $\cm\in\Corrmeas^2([0,T),D;U)$, is the hierarchy of equations
	\begin{equation}\label{eq:FKsys}
	\begin{split}
	&\int_0^T\!\!\int_{\dom^k}\!\!\int_{U^k}\!\!(\xi_1\otimes \cdots\otimes \xi_k):\frac{\partial \varphi}{\partial t}(t,x)\,d\nu_{t,x}^k(\xi) \,dx \,dt \\
	&+\int_{\dom^k}\!\!\int_{U^k}\!\!(\xi_1\otimes \cdots\otimes \xi_k):\varphi(0,x)\,d\nu_{0,x}^k(\xi) \,dx \\
	&+\sum_{i=1}^k\int_0^T\!\!\int_{\dom^k}\!\!\int_{U^k}(\xi_1\otimes\cdots\otimes(\xi_i\otimes\xi_i)\otimes\cdots\xi_k):\Grad_{x_i}\varphi(t,x)\,d\nu_{t,x}^k(\xi) \,dx \,dt\\
	=&\ -\vis\sum_{i=1}^k\int_0^T\!\!\int_{\dom^k}\!\!\int_{U^k}\!\!(\xi_1\otimes \cdots\otimes \xi_k):\Delta_{x_i}\varphi(t,x)\,d\nu_{t,x}^k(\xi) \,dx \,dt%\\
	%-\sum_{i=1}^k\int_0^T\!\!\int_{\dom^k}\!\!\int_{(\R^N)^k}\!\!(\xi_1\otimes\cdots\otimes f(t,x_i)\otimes \cdots\otimes \xi_k):\varphi(t,x)\,d\nu_{t,x}^k(\xi) \,dx \,dt
	\end{split}
	\end{equation}
	for all $k\in\N$, for all $\varphi\in C^2_c([0,T)\times \dom^k;U^k)$ with $\Div_{x_i}\varphi=0$ for all $i=1,\dots, k$,
	along with the divergence constraint
	\begin{equation}
	\label{eq:divfreeconst}
	\int_{\dom^{k}}\int_{U^{k}} \xi_1\otimes\dots\otimes\xi_{\ell}\otimes\alpha_{\ell+1}(\xi_{\ell+1})\otimes\dots \otimes \alpha_k(\xi_k) \,d\nu_{t,x}^k(\xi)\cdot\Grad_{x_1,\dots, x_\ell}\varphi(x) \,dx = 0,
	\end{equation}
	where $\Grad_{x_1,\dots,x_\ell}=(\Grad_{x_{1}},\dots,\Grad_{x_\ell})^\top$, $1\leq \ell\leq k\in\N$, for all $\varphi\in H^1(\dom^k;U^{k-\ell})$, $\alpha_j\in C(U;U)$, with $\alpha_j(v)\leq C(1+|v|^2)$ for all $j=1,\dots, k$. 

	If $\cm$ solves the Friedman--Keller system of moments equations and in addition satisfies the energy inequality
	\begin{equation}\label{eq:energycm}
	\begin{aligned}
	&\sum_{k=0}^K a_k\int_{\dom^k}\int_{U^k}|\xi_1|^2\dots|\xi_k|^2 \,d\nu_{t,x}^k(\xi) \,dx\\
	&+ 2\vis \sum_{k=0}^K a_k\sum_{i=1}^k\sum_{j=1}^d \lim_{h\rightarrow 0}\frac{1}{h^2}\int_0^t\!\!\int_{\dom^{k}}\!\int_{U^{k+1}} \!\!\!|\xi_1|^2\dots |\xi_i - \xi_{k+1}|^2\dots |\xi_k|^2\,d\nu_{t,(x,x_i+h\mathbf{e}_j)}^{k+1}(\xi,\xi_{k+1})\,dx \,ds\\
	&\leq 
	\sum_{k=0}^K a_k\int_{\dom^k}\!\!\int_{U^{k}}|\xi_1|^2\dots|\xi_k|^2 \,d\nu_{0,x}^k(\xi)\,dx
	\end{aligned}
	\end{equation}
	for all $K\in\N$ and $a_k\in\R$, $k=0,\dots, K$ such that $p_K(s)=\sum_{k=0}^K a_k s^k$ is a nonnegative, nondecreasing polynomial for $s\in [0,R]$ for $R$ sufficiently large related to the support of the correlation measure (see~\eqref{eq:tLpLinftybound}),
	then we call $\cm$ a \emph{Friedman--Keller statistical solution} of the Navier--Stokes (when $\eps>0$) or Euler (when $\eps=0$) equations.
\end{definition}
\begin{remark}
	{Friedman--Keller statistical solutions} are analogous to the definition of statistical solutions for hyperbolic systems of conservation laws as introduced in~\cite{FLM17,systemspaper} for compressible flows.
\end{remark}
\begin{remark}\label{rem:weakcont}
By a standard argument for weak solutions to continuity equations, the map $t \mapsto \ip{\nu^k_{t,\cdot}}{\xi_1\otimes\cdots\otimes\xi_k}$ is weakly continuous for every $k\in\N$; see e.g.~\cite[Remark 2.2]{AmbrosioCrippa2008}.
\end{remark}
\begin{remark}[Formulation of~\eqref{eq:FKsys} with non-divergence free test functions]\label{rem:lerayprojection}
Denoting $\Leray_k\coloneqq\Leray_{x_1}\dots\Leray_{x_k}$ (cf.~Section \ref{sec:lerayproj}), we can replace the divergence-free test function $\varphi\in C^2_c([0,T]\times \dom^k;U^k)$ in~\eqref{eq:FKsys} by $\Leray_k\varphi$ for an arbitrary $\varphi\in C^2_c([0,T]\times \dom^k;U^k)$. Using the fact that the Leray projection is self-adjoint and that, by \eqref{eq:divfreeconst},
	\begin{equation*}
	\Div_{x_i}\int_{U^k}\!\!(\xi_1\otimes \cdots\otimes \xi_k) \,d\nu_{t,x}^k(\xi) = 0 \qquad \forall\  i=1,\dots, k,\quad \text{a.e. }\, (t,x)\in [0,T]\times\dom^k
	\end{equation*}
	we observe that
	\begin{equation*}
	\Leray_k \int_{U^k}\!\!(\xi_1\otimes \cdots\otimes \xi_k) \,d\nu_{t,x}^k(\xi) = \int_{U^k}\!\!(\xi_1\otimes \cdots\otimes \xi_k) \,d\nu_{t,x}^k(\xi), \quad \text{a.e. }\, t\in [0,T],\,x\in \dom^k.
	\end{equation*}
	Therefore,
	\begin{align*}
	&\int_0^T\!\!\int_{\dom^k}\!\!\int_{U^k}\!\!(\xi_1\otimes \cdots\otimes \xi_k):\frac{\partial \Leray_k\varphi}{\partial t}(t,x)\,d\nu_{t,x}^k(\xi) \,dx \,dt\\
	= 	&\int_0^T\!\!\int_{\dom^k}\!\!\Leray_k\left(\int_{U^k}\!\!(\xi_1\otimes \cdots\otimes \xi_k)\,d\nu_{t,x}^k(\xi)\right):\frac{\partial \varphi}{\partial t}(t,x) \,dx \,dt \\
	= &\int_0^T\!\!\int_{\dom^k}\!\!\int_{U^k}\!\!(\xi_1\otimes \cdots\otimes \xi_k)\,d\nu_{t,x}^k(\xi):\frac{\partial \varphi}{\partial t}(t,x) \,dx \,dt.
	\end{align*}
	Similarly,
	\begin{equation*}
	\int_{\dom^k}\!\!\int_{U^k}\!\!(\xi_1\otimes \cdots\otimes \xi_k):\Leray_k\varphi(0,x)\,d\nu_{0,x}^k(\xi) \,dx = \int_{\dom^k}\!\!\int_{U^k}\!\!(\xi_1\otimes \cdots\otimes \xi_k):\varphi(0,x)\,d\nu_{0,x}^k(\xi) \,dx,
	\end{equation*}
	and
	\begin{align*}
	&\sum_{i=1}^k\int_0^T\!\!\int_{\dom^k}\!\!\int_{U^k}\!\!(\xi_1\otimes \cdots\otimes \xi_k):\Delta_{x_i}\Leray_k\varphi(t,x)\,d\nu_{t,x}^k(\xi) \,dx \,dt\\
	= &\sum_{i=1}^k\int_0^T\!\!\int_{\dom^k}\!\!\int_{U^k}\!\!(\xi_1\otimes \cdots\otimes \xi_k):\Delta_{x_i}\varphi(t,x)\,d\nu_{t,x}^k(\xi) \,dx \,dt,
	\end{align*}
	the last one being true due to the fact that the Laplacian and the Leray projection commute on the torus.
	Hence, the weak formulation~\eqref{eq:FKsys} can be rewritten as
	\begin{equation}
	\label{eq:FKsyspressure}
	\begin{split}
	&\int_0^T\!\!\int_{\dom^k}\!\!\int_{U^k}\!\!(\xi_1\otimes \cdots\otimes \xi_k):\frac{\partial \varphi}{\partial t}(t,x)\,d\nu_{t,x}^k(\xi) \,dx \,dt+\int_{\dom^k}\!\!\int_{U^k}\!\!(\xi_1\otimes \cdots\otimes \xi_k):\varphi(0,x)\,d\nu_{0,x}^k(\xi) \,dx \\
	&+\sum_{i=1}^k\int_0^T\!\!\int_{\dom^k}\!\!\int_{U^k}(\xi_1\otimes\cdots\otimes(\xi_i\otimes\xi_i)\otimes\cdots\xi_k):\Grad_{x_i}\varphi(t,x)\,d\nu_{t,x}^k(\xi) \,dx \,dt\\
	=&-\vis\sum_{i=1}^k\int_0^T\!\!\int_{\dom^k}\!\!\int_{U^k}\!\!(\xi_1\otimes \cdots\otimes \xi_k):\Delta_{x_i}\varphi(t,x)\,d\nu_{t,x}^k(\xi) \,dx \,dt\\
	&+\sum_{i=1}^k\int_0^T\!\!\int_{\dom^k}\!\!\int_{U^k}(\xi_1\otimes\cdots\otimes(\xi_i\otimes\xi_i)\otimes\cdots\xi_k):\Grad_{x_i}\Grad_{x_i}\psi_{\varphi,i}(t,x)\,d\nu_{t,x}^k(\xi) \,dx \,dt
	\end{split}
	\end{equation}
	where $\varphi\in C^2_c([0,T]\times\dom^k;U^k)$. The terms
	\begin{equation}
	\sum_{i=1}^k\int_0^T\!\!\int_{\dom^k}\!\!\int_{U^k}(\xi_1\otimes\cdots\otimes(\xi_i\otimes\xi_i)\otimes\cdots\xi_k):\Grad_{x_i}\Grad_{x_i}\psi_{\varphi,i}(t,x) \,d\nu_{t,x}^k(\xi) \,dx \,dt
	\end{equation}
	correspond to the pressure in the deterministic setting.

\end{remark}

To define statistical solutions in the sense of Foia\c{s} and Prodi, we need to introduce some notation.
We denote by $\hdiv$ the space of divergence free $L^2(\dom;U)$-vector fields and by $\V$ the space of divergence free functions in $\Hvec$ (these can be obtained as the closures of $C^\infty(\dom;U)\cap \{\Div u=0\}$ in $\Lvec$ and $\Hvec$, respectively, with suitable integral conditions:
\begin{equation*}
\begin{split}
\hdiv& = \left\{u\in L^2(\torus^d)\ :\  \Div u = 0,\  \int_{\torus^d} u(x)\,dx=0\right\},\\
\V &=\left\{u\in H^1(\torus^d)\ :\  \Div u = 0,\  \int_{\torus^d} u(x)\,dx=0\right\},
\end{split}
\end{equation*}
for periodic boundary conditions.
We denote the $L^2$-inner product by
\begin{equation*}
(u,v) = \int_{\dom} u(x) v(x) \,dx,
\end{equation*}
and for $\vis>0$ the $H^1$-inner product by
\begin{equation*}
a(u,v)= \vis\sum_{i=1}^d\int_{\dom}\frac{\partial u}{\partial x^i}\cdot\frac{\partial u}{\partial x^i} \,dx,
\end{equation*}
Define the Stokes operator $A$ by
\begin{equation*}
\begin{split}
Au &= -\Leray \Delta u, \quad \text{for all }u\in D(A)= \V\cap H^2(\dom;U),\\
\vis(Au,v)&= a(u,v),\quad \text{for all } u,v\in D(A^{1/2}),
\end{split}
\end{equation*}
where $\Leray$ is the Leray projector, and the skew-symmetric trilinear form $b$ by
\begin{equation}\label{eq:defb}
\begin{split}
b(u,v,w) &\coloneqq \int_{\dom} (u\cdot \Grad) v\cdot w \,dx = (B(u,v),w), \quad u,v,w\in D(A^{1/2})\\
B(u)&\coloneqq B(u,u).
\end{split}
\end{equation}
We can then write the Navier--Stokes equations in the functional formulation: Let $T>0$, $u_0\in \hdiv$, find $u\in L^\infty([0,T];\hdiv)\cap L^2([0,T];\V)$ with $u'\coloneqq\frac{d}{dt}u\in L^1([0,T];D(A^{-1/2}))$ such that
\begin{equation}
\label{eq:NSEoperatorform}
u'+ \vis Au+B(u)= 0
\end{equation}
and $u(0)= u_0$ in a suitable sense. This corresponds to the weak formulation
\begin{equation}
\label{eq:weakform}
\frac{d}{dt}(u,v) +a(u,v)+b(u,u,v) =0 \qquad \text{for all } v\in \V.
\end{equation}
If we denote
\begin{equation}
\label{eq:F}
F(t,u)\coloneqq-\vis A u -B(u),
\end{equation}
the functional formulation becomes
\begin{equation}
\label{eq:NSEfunctional}
u'(t)=F(t,u(t)).
\end{equation}
We need the following class of test functions:

\begin{notation}\cite{FoiasTemam2001}
	Let $\cyl$ denote	%\todof{change macro to something reasonable}
	the class of cylindrical test functions consisting of the real-valued functionals $\Phi=\Phi(u)$ that depend on a finite number $k\in \N$ of components of $u$, that is,
	\begin{equation*}
	\Phi(u)= \phi\bigl((u,g_1),\dots, (u,g_k)\bigr),
	\end{equation*}
	where $\phi\in C^1_c(\R^k)$ and $g_1,\dots, g_k\in \Hvec$.
	%\todof{originially it is $\V$ here, check everything goes through! do we need $H^1_0$?}
	Let $\cyl^0$ denote the subset of such functions which satisfy $g_1,\dots, g_k\in \V$.
	We denote by $\Phi'$ the differential of $\Phi$ in $\hdiv$, which can be expressed as
	\begin{equation*}
	\Phi'(u)= \sum_{j=1}^k \partial_j \phi\bigl((u,g_1),\dots,(u,g_k)\bigr) g_j,
	\end{equation*}
	where $\partial_j \phi$ is the derivative of $\phi$ with respect to its $j$th component.
\end{notation}
We can now define statistical solutions in the sense of Foia\c{s} and Prodi. We will use the definition as it stated in their newer work~\cite[Def. 3.2]{Foias2013}:

\begin{definition}[Foia\c{s}--Prodi \cite{FoiasTemam2001,Foias1972,FoiasProdi1976,Foias2013}]\label{def:foiastemamstatsol}

	A family of probability measures $(\mu_t)_{0\leq t\leq T}$ on $\hdiv$ is a \emph{Foia\c{s}--Prodi statistical solution} of the Navier--Stokes equations on $\hdiv$ with initial data $\mu_0$
	if
	\begin{enumerate}[{\bf (a)}]
		\item The function
		\begin{equation}\label{eq:measurabilitymu}
		t\mapsto \int_{L^2_{\Div}} \varphi(u)\,d\mu_t(u),
		\end{equation}
		is measurable on $[0,T]$ for every $\varphi\in C_b(\hdiv)$;
		\item $\mu$ satisfies the weak formulation
	\begin{equation}\label{eq:statsolFT}
		\int_{L^2_{\Div}} \Phi(u) \,d\mu_t(u)=\int_{L^2_{\Div}}\Phi(u) \,d\mu_0(u) + \int_0^t\int_{L^2_{\Div}} (F(s,u),\Phi'(u)) \,d\mu_s(u) \,ds
		\end{equation}
		for all $t\in[0,T]$ and all cylindrical test functions $\Phi\in \cyl^0$, where $F$ is given in \eqref{eq:F}.
		\item $\mu$ satisfies the strengthened mean energy inequality:
		 For any $\psi\in C^1(\R,\R)$ nonnegative, nondecreasing with bounded derivative and $t\in [0,T]$, the inequality
		\begin{multline}
		\label{eq:energystatstrong}
	%	\label{eq:energystat}
		\int_{L^2_{\Div}} \psi(\|u\|_{L^2(\dom)}^2) \,d\mu_t(u) + 2\vis \int_0^t\int_{L^2_{\Div}} \psi'(\|u\|_{L^2(\dom)}^2)  |u|_{H^1(\dom)}^2 \,d\mu_s(u)\\
		\leq
		\int_{L^2_{\Div}} \psi(\|u\|_{L^2(\dom)}^2 )\,d\mu_{0}(u)
		\end{multline}
		holds. 
		\item The function
		\begin{equation}
		\label{eq:continuityatzero}
		t\mapsto \int_{L^2_{\Div}}\psi(\norm{u}_{L^2(\dom)}^2)\, d\mu_t(u)
		\end{equation}
		is continuous at $t=0$ from the right, for any function $\psi\in C^1(\R,\R)$ nonnegative, nondecreasing with bounded derivative.
	\end{enumerate}
\end{definition}
\begin{remark}\label{rem:integrability}
Note that, as a consequence of the energy inequality \eqref{eq:energystatstrong} for $\psi(s)=s$, the function
\begin{equation*}
t\mapsto\int_{L^2_{\Div}}\|u\|_{L^2(\dom)}^2 \,d\mu_t(u),
\end{equation*}
belongs to $L^\infty([0,T])$ and the function
\begin{equation}\label{eq:h1linf}
t\mapsto\int_{L^2_{\Div}}|u|_{H^1(\dom)}^2 \,d\mu_t(u),
\end{equation}
belongs to $L^1([0,T])$. Notice also that~\eqref{eq:statsolFT} implies that
\begin{equation*}
t\mapsto \int_{L^2_{\Div}}\Phi(u)d\mu_t(u)
\end{equation*}
for $\Phi(u)$ a cylindrical test function, is continuous since
\begin{equation*}
\int_{L^2_{\Div}} (F(s,u),\Phi'(u)) \,d\mu_t(u)
\end{equation*}
is locally integrable. Combining this fact with condition~{\bf (c)}, conditionn~{\bf (d)} follows directly.
\end{remark}
\subsection{Equivalence between the solution concepts}
Next, we show that the Friedman--Keller statistical solutions in Definition~\ref{def:fksys} and the Foia\c{s}--Prodi statistical solutions in Definition~\ref{def:foiastemamstatsol} are in fact the same.
\begin{theorem}[Foia\c{s}--Temam statistical solutions satisfy the Friedman--Keller system]\label{thm:FT2FKsys_s}
	Let $\mu$ be a Foia\c{s}--Prodi statistical solution such that the initial condition $\mu_0$ has bounded support,
\[\text{supp}(\mu_0)\subset B\subset \hdiv, \qquad B\subset \big\{u\in \Lvec\ :\ \|u\|_{L^2(\dom)}\leq R\big\}
\]
 for some $R>0$. Then $\mu$ corresponds (cf.~Theorem~\ref{thm:equivalencemeasures}) to a correlation measure $\cm_t=(\nu^1,\nu^2,\dots)$ that is a statistical solution in the Friedman--Keller sense (cf.~Definition~\ref{def:fksys}). 
\end{theorem}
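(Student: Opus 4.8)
The plan is to first use Theorem~\ref{thm:equivalencemeasures} (with $p=2$) to associate to the Foia\c{s}--Prodi solution $\mu=(\mu_t)$ a correlation measure $\cm$, and then to verify the three ingredients of Definition~\ref{def:fksys}: the divergence constraint \eqref{eq:divfreeconst}, the moment hierarchy \eqref{eq:FKsys}, and the energy inequality \eqref{eq:energycm}. To invoke Theorem~\ref{thm:equivalencemeasures} I would check its hypotheses: measurability (i) follows from property {\bf (a)} of Definition~\ref{def:foiastemamstatsol} (after approximating the functionals $L_g$ by bounded continuous cylindrical functionals), and the $L^2$-bound \eqref{eq:mulpbound} from \eqref{eq:energystatstrong} with $\psi(s)=s$. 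I would also record that the support stays bounded in time: inserting into \eqref{eq:energystatstrong} a nonnegative, nondecreasing $\psi$ vanishing on $[0,R^2]$ forces $\int\psi(\norm{u}_{L^2(\dom)}^2)\,d\mu_t=0$, so $\mu_t$ is carried by $\{\norm{u}_{L^2(\dom)}\le R\}$ for a.e.\ $t$; this uniform bound later lets polynomial integrands be replaced by compactly supported test functions. The divergence constraint \eqref{eq:divfreeconst} is then immediate from the fact that each $\mu_t$ lives on $\hdiv$: rewriting its left-hand side via the duality \eqref{eq:corrmeastimeduality} and integrating by parts in $x_1,\dots,x_\ell$ moves each $\Grad_{x_i}$ onto the linear factor $u(x_i)$, producing $\Div_{x_i}u(x_i)=0$ for $\mu_t$-a.e.\ $u$, while the growth bound $|\alpha_j(v)|\le C(1+|v|^2)$ and $\norm{u}_{L^2(\dom)}\le R$ guarantee integrability of all factors.

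The heart of the matter is the moment hierarchy \eqref{eq:FKsys}, which I would extract from the weak formulation \eqref{eq:statsolFT} by feeding it tensor-product cylindrical test functions. For divergence-free $g_1,\dots,g_k\in\V$ put $\Phi(u)=\prod_{j=1}^k(u,g_j)$, cut off outside $\{\norm{u}_{L^2(\dom)}\le R\}$ so that $\Phi\in\cyl^0$ while $\Phi$ and $\Phi'$ are unchanged on the support of every $\mu_t$. Then $\Phi'(u)=\sum_j\big(\prod_{m\neq j}(u,g_m)\big)g_j$, and using \eqref{eq:F}, \eqref{eq:defb}, the identity $\vis(Au,g_j)=a(u,g_j)$, and integration by parts on the torus, one finds that $(F(s,u),\Phi'(u))$ equals $\sum_j\big(\prod_{m\neq j}(u,g_m)\big)\big[\vis(u,\Delta g_j)+(u\otimes u,\Grad g_j)\big]$, where the convective term used $\Div u=0$. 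Under the duality \eqref{eq:corrmeastimeduality} these two pieces are precisely the Laplacian and convective integrands of \eqref{eq:FKsys} for the tensor test function $\varphi=g_1\otimes\cdots\otimes g_k$. By Remark~\ref{rem:integrability} the map $t\mapsto\int\Phi\,d\mu_t$ is absolutely continuous with integrable derivative $\int(F,\Phi')\,d\mu_t$; multiplying the resulting a.e.\ identity by a temporal cutoff $\theta\in C^1_c([0,T))$ and integrating by parts in $t$ then yields \eqref{eq:FKsys} for $\varphi(t,x)=\theta(t)\,g_1\otimes\cdots\otimes g_k$, the boundary contribution at $t=0$ furnishing the initial-data term. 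By linearity of \eqref{eq:statsolFT} the identity extends to finite sums of such time-dependent tensor products.

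I expect the remaining density step to be the main obstacle: \eqref{eq:FKsys} must be shown for \emph{every} divergence-free $\varphi\in C^2_c([0,T)\times\dom^k;U^k)$, not only for tensor-product sums. The natural route is to approximate an arbitrary $\varphi$ by finite sums of (time-dependent) tensor products in a high Sobolev norm on the torus (via Fourier series) and then apply the componentwise Leray projector $\Leray_k=\Leray_{x_1}\cdots\Leray_{x_k}$ of Section~\ref{sec:lerayproj}: being a bounded Fourier multiplier, $\Leray_k$ preserves Sobolev convergence (hence, by embedding, $C^2$ convergence), it sends each $g_1\otimes\cdots\otimes g_k$ to the divergence-free tensor product $(\Leray g_1)\otimes\cdots\otimes(\Leray g_k)$, and it fixes the already divergence-free target $\varphi$; this supplies the desired approximation by divergence-free tensor-product sums. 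One then checks that each term of \eqref{eq:FKsys}---the convective integrand being quadratic, the others linear, in the derivatives of the test function---is continuous along this convergence, where the uniform bound $\norm{u}_{L^2(\dom)}\le R$ and the integrability \eqref{eq:h1linf} of $|u|_{H^1(\dom)}^2$ are used to pass to the limit.

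Finally, the energy inequality \eqref{eq:energycm} reduces to \eqref{eq:energystatstrong}. By duality and Fubini the zeroth-order moment $\int_{\dom^k}\ip{\nu^k_{t,x}}{|\xi_1|^2\cdots|\xi_k|^2}\,dx$ equals $\int_{\hdiv}\norm{u}_{L^2(\dom)}^{2k}\,d\mu_t(u)$, so $\sum_k a_k$ of these equals $\int p_K(\norm{u}_{L^2(\dom)}^2)\,d\mu_t$, and similarly for the data term. For the dissipation term I would recognize $\tfrac1{h^2}|\xi_i-\xi_{k+1}|^2$ at the shifted point $x_i+h\mathbf{e}_j$ as the difference quotient $\tfrac1{h^2}|u(x_i)-u(x_i+h\mathbf{e}_j)|^2$; summing over $j$ and letting $h\to0$ gives $|\Grad u(x_i)|^2$, and summing over $i$ and integrating yields $k\norm{u}_{L^2(\dom)}^{2(k-1)}|u|_{H^1(\dom)}^2$, hence $p_K'(\norm{u}_{L^2(\dom)}^2)|u|_{H^1(\dom)}^2$ after summing in $k$. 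The interchange of $\lim_{h\to0}$ with the $s$- and $\mu_s$-integrations is justified by dominated convergence, using the elementary bound $\tfrac1{h^2}\norm{u(\cdot+h\mathbf{e}_j)-u}_{L^2(\dom)}^2\le|u|_{H^1(\dom)}^2$ together with \eqref{eq:h1linf}. After these identifications \eqref{eq:energycm} is exactly \eqref{eq:energystatstrong} with $\psi=p_K$; since $p_K$ has unbounded derivative, I would apply \eqref{eq:energystatstrong} to a $C^1$ modification of $p_K$ that is nonnegative, nondecreasing, has bounded derivative and agrees with $p_K$ on $[0,R^2]$, which by the support bound leaves every integral unchanged.
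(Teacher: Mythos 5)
Your proposal is correct and follows essentially the same route as the paper's proof in Appendix~\ref{app:equivalence}: polynomial/tensor-product cylindrical test functions in the weak formulation (the paper invokes the space--time identity \eqref{eq:3.13i} from Foia\c{s}' work where you re-derive it from \eqref{eq:statsolFT} plus a temporal cutoff), a density argument to reach general divergence-free $\varphi$, polynomial approximation of $\psi$ in the energy inequality, and the divergence constraint from the support of $\mu_t$ on $\hdiv$. The only point where you are lighter than the paper is the identification of the dissipation term with $\nu^{k+1}$ evaluated along the shifted diagonal $y=x_i+h\mathbf{e}_j$ (a Lebesgue-null subset of $\dom^{k+1}$, so not directly covered by the duality \eqref{eq:corrmeastimeduality}), which the paper justifies through the mollification and diagonal-continuity argument of Lemmas~\ref{lem:gradientrepresentation}--\ref{lem:gradientrep2}; otherwise the argument is complete.
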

Conversely, we have:
\begin{theorem}[Friedman--Keller solutions are Foia\c{s}--Prodi statistical solutions]\label{thm:FKsys2FT_s}
		Let $\cm=(\cm_t)_{0\leq t\leq T}$ be a Friedman--Keller statistical solution of Navier--Stokes (cf.\ Definition \ref{def:fksys}) with bounded support, i.e.,
	\begin{equation}\label{eq:bdmoments}
	\int_{\dom^k}\int_{U^{k}} |\xi_1|^2\dots|\xi_k|^2 \,d\nu_{t,x}^k(\xi) \,dx\leq R^k<\infty,
	\end{equation}
	for some $0<R<\infty$, every $k\in \N$, and almost every $t\in [0,T]$. Then $\cm$ corresponds to a probability measure $\mu=(\mu_t)_{0\leq t\leq T}$ on a bounded set of $\hdiv$ which is a Foia\c{s}--Prodi statistical solution of the Navier--Stokes equations (cf.~Definition~\ref{def:foiastemamstatsol}).
\end{theorem}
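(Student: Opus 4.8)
\subsection*{Proof strategy for Theorem~\ref{thm:FKsys2FT_s}}

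The plan is to produce the candidate family $\mu=(\mu_t)_{0\le t\le T}$ directly from Theorem~\ref{thm:equivalencemeasures} and then verify the four conditions {\bf (a)}--{\bf (d)} of Definition~\ref{def:foiastemamstatsol}. The moment bound \eqref{eq:bdmoments} reads, after the duality \eqref{eq:corrmeastimeduality} applied to $g(x,\xi)=|\xi_1|^2\cdots|\xi_k|^2$, as $\int_{L^2}\|u\|_{L^2}^{2k}\,d\mu_t\le R^k$ for every $k$, and a Chebyshev argument then forces $\|u\|_{L^2}^2\le R$ for $\mu_t$-a.e.\ $u$, i.e.\ uniformly bounded support. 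To place this support inside $\hdiv$ I would first test the divergence constraint \eqref{eq:divfreeconst} at $k=2$, $\ell=1$, $\alpha_2=\mathrm{id}$, with $\varphi(x_1,x_2)=\chi(x_1)\Grad\chi(x_2)$ for smooth scalar $\chi$; via \eqref{eq:corrmeastimeduality} this becomes $\int_{L^2}(u,\Grad\chi)^2\,d\mu_t=0$, and ranging $\chi$ over a countable dense set yields $\Div u=0$ $\mu_t$-a.s. The zero-mean normalisation follows similarly: testing \eqref{eq:FKsys} at $k=2$ with the (divergence-free) constant-in-space function $\theta(t)\,c_1\otimes c_2$ annihilates the convective and viscous terms, so $t\mapsto\int_{L^2}(\bar u\cdot c_1)(\bar u\cdot c_2)\,d\mu_t$ (with $\bar u=\int_D u\,dx$) is constant; it vanishes at $t=0$ by the hypothesis $\supp(\mu_0)\subset\hdiv$, hence for all $t$, giving $\bar u=0$ $\mu_t$-a.s. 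Condition {\bf (a)} then follows from Theorem~\ref{thm:equivalencemeasures}(i): truncating to reduce linear observables to $C_0$ functionals and a monotone-class argument extend the measurability of $t\mapsto\Ypair{\mu_t}{L_g}$ to all $\varphi\in C_b(\hdiv)$, since on the bounded support the cylindrical functionals generate the Borel $\sigma$-algebra.

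The heart is condition {\bf (b)}. I would first establish \eqref{eq:statsolFT} for monomials $\Phi(u)=\prod_{j=1}^k(u,g_j)$ with $g_j\in\V$, by testing the Friedman--Keller hierarchy \eqref{eq:FKsys} at level $k$ with the tensor product $\varphi(t,x)=\theta(t)\,g_1(x_1)\otimes\cdots\otimes g_k(x_k)$, which is divergence-free in each slot precisely because each $g_j$ is. Under \eqref{eq:corrmeastimeduality} the time and initial terms become $\int_0^T\theta'\int_{L^2}\Phi\,d\mu_t\,dt$ and $\theta(0)\int_{L^2}\Phi\,d\mu_0$; the convective term contracts $(\xi_i\otimes\xi_i):\Grad_{x_i}g_i$ into $b(u,g_i,u)=-b(u,u,g_i)$ by the skew-symmetry built into \eqref{eq:defb}; and the viscous term, after an integration by parts on the torus, becomes $-a(u,g_i)$. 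Summing over $i$ reproduces exactly $\int_0^t\int_{L^2}(F(s,u),\Phi'(u))\,d\mu_s\,ds$ with $F$ as in \eqref{eq:F}, so the weak-in-time Foia\c{s}--Prodi identity and the Friedman--Keller identity coincide. Since the admissible $\varphi$ in \eqref{eq:FKsys} are $C^2_c$, a preliminary density step approximating each $g_j\in\V$ by smooth divergence-free fields (uniformly in the available bounds) is required.

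By linearity this gives \eqref{eq:statsolFT} for all polynomial $\phi$, and I would pass to arbitrary $\phi\in C^1_c(\R^k)$ by a simultaneous Stone--Weierstrass approximation $\phi_n\to\phi$, $\Grad\phi_n\to\Grad\phi$ uniformly on the compact image $\{((u,g_1),\dots,(u,g_k)):\|u\|_{L^2}^2\le R\}\subset\R^k$. The limit passes because $\int_{L^2}\Phi_n\,d\mu_t\to\int_{L^2}\Phi\,d\mu_t$ uniformly and, crucially, $\int_{L^2}\sum_j|(F(s,u),g_j)|\,d\mu_s$ is time-integrable: the nonlinear part is controlled through the skew-symmetric form by $|b(u,u,g_j)|=|b(u,g_j,u)|\le\|\Grad g_j\|_{\infty}\|u\|_{L^2}^2\le \|\Grad g_j\|_{\infty}R$, and the viscous part by $|a(u,g_j)|\le \vis\|g_j\|_{H^1}\|u\|_{H^1}$ together with the $L^1_t$-bound on $\int_{L^2}|u|_{H^1}^2\,d\mu_t$ noted in Remark~\ref{rem:integrability}. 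Finally, \eqref{eq:FKsys} delivers the identity only in the time-integrated (distributional) sense with the initial datum built in; to obtain \eqref{eq:statsolFT} for \emph{every} $t$ I would use that the right-hand side $\int_0^t\int_{L^2}(F,\Phi')\,d\mu_s\,ds$ is absolutely continuous and that $t\mapsto\int_{L^2}\Phi\,d\mu_t$ is continuous by the weak continuity of $t\mapsto\ip{\nu^k_{t,\cdot}}{\xi_1\otimes\cdots\otimes\xi_k}$ (Remark~\ref{rem:weakcont}), so two continuous representatives that agree as distributions agree pointwise.

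For condition {\bf (c)}, the duality \eqref{eq:corrmeastimeduality} converts the first term of \eqref{eq:energycm} into $\int_{L^2}p_K(\|u\|_{L^2}^2)\,d\mu_t$ with $p_K(s)=\sum_k a_k s^k$, while in the finite-difference viscous term the quotient $|\xi_i-\xi_{k+1}|^2/h^2$ evaluated at the shifted point $x_i+h\mathbf{e}_j$ converges to $|\partial_{x^j}u(x_i)|^2$; summing over $i$ and $j$ yields $2\vis\int_0^t\int_{L^2}p_K'(\|u\|_{L^2}^2)\,|u|_{H^1}^2\,d\mu_s\,ds$. Thus \eqref{eq:energycm} is precisely \eqref{eq:energystatstrong} for $\psi=p_K$, and approximating a general nonnegative nondecreasing $\psi$ with bounded derivative by such polynomials on $[0,R]$ (e.g.\ approximating $\psi'\ge0$ by nonnegative polynomials and integrating) gives {\bf (c)}; condition {\bf (d)} then follows from {\bf (b)}, {\bf (c)} and Remark~\ref{rem:integrability}. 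I expect the main obstacle to be the machinery of the previous paragraph --- simultaneously carrying out the density passage in $g_j$ and in $\phi$ while upgrading the time-integrated identity to a pointwise-in-time one --- since this is where the weak continuity, the absolute continuity of the nonlinear contribution, and the uniform moment and energy bounds must all be reconciled. By contrast, once the duality dictionary of \eqref{eq:corrmeastimeduality} is in place, the algebraic matching of the convective and viscous terms and the verification of the energy inequality are essentially bookkeeping.
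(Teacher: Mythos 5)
Your proposal follows essentially the same route as the paper's proof: construct $\mu_t$ via the duality of Theorem~\ref{thm:equivalencemeasures}, deduce bounded support and (via the quadratic test of the divergence constraint plus Chebyshev) support in $\hdiv$, verify measurability by a monotone-class argument, obtain the Foia\c{s}--Prodi identity first for monomial cylinder functionals by testing \eqref{eq:FKsys} with tensor-product test functions and then for general $\Phi\in\cyl^0$ by Weierstrass approximation on the compact support, upgrade the time-distributional identity to a pointwise one by absolute continuity, and recover the strengthened energy inequality from \eqref{eq:energycm} via the finite-difference representation of the gradient and approximation of $\psi$ by nondecreasing polynomials. The only differences are cosmetic (your explicit treatment of the zero-mean normalisation and of the density step for $g_j\in\V$, and the order in which {\bf (b)} and {\bf (c)} are presented --- noting that the $H^1$-support information from {\bf (c)} is still needed inside {\bf (b)}), so the argument is correct and matches the paper.
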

The proofs of these two results are given in Appendix~\ref{app:equivalence}.

Foia\c{s} et al.\ have shown existence of Foia\c{s}--Prodi statistical solutions for the (forced) Navier--Stokes equations, see e.g.~\cite{Foias1972,Foias1973,FoiasTemam2001}. Using these equivalence theorems, this implies existence of statistical solutions via correlation measures as in Definition~\ref{def:fksys}.
\begin{remark}
	The equivalence theorems~\ref{thm:FT2FKsys_s} and~\ref{thm:FKsys2FT_s} are restricted to probability measures with bounded support. It should be possible to extend these results to probability measures having sufficiently fast decay near infinity; however, the proofs would become significantly more technical. We have therefore decided to restrict ourselves to probability measures with bounded support.
\end{remark}

\section{Vanishing viscosity limit of statistical solutions of Navier--Stokes}\label{sec:VV}
The goal of this section is to pass to the inviscid limit $\vis\to 0$ under the assumption of \emph{weak statistical scaling} (c.f. Section~\ref{sec:scaling}, Assumption~\ref{ass:scaling}). We will first prove a rigorous result on the longitudinal third order structure function
\begin{equation}\label{eq:S3def}
S_\|^3(\tau,r)\coloneqq\int_0^\tau\int_{\Hn}\!\!\fint_{\mathbb{S}^2}\int_{\dom}\big((u(x+r n)-u(x))\cdot n\big)^3 \,dxdS(n) \,d\mu_t(u)\,dt,
\end{equation}
and then relate it to the similarly defined second order structure function using the weak scaling assumption. Together with  weak statistical anisotropy, this yields diagonal continuity of the correlation measures $\cm^\vis$ that is needed to apply the compactness theorem~\ref{thm:timedepcmcompactness} and pass to the limit. The proof of the scaling estimate for the third order structure function~\eqref{eq:S3def} in Lemma~\ref{lem:S30} and ~\ref{lem:boundS3parallel} largely follows the proof of a similar result for martingale solutions of stochastic Navier--Stokes equations in~\cite{Bedrossian2019}. To simplify notation, we will omit writing the dependence of $\cm$ and $\mu$ on $\vis$ in the following sections.

\subsection{K\'arm\'an--Howarth--Monin relation}\label{sec:KHM}
The key to deriving an estimate on the behavior of the  third order structure function~\eqref{eq:S3def} is the so-called K\'arm\'an--Howarth--Monin (KHM) relation~\cite{deKarman1938} that describes the evolution of the second correlation marginal. Similar relations have been derived before for various settings (stochastic, forced, etc.), see~\cite{Frisch,Nie1999,deKarman1938,MoninYaglom,Bedrossian2019,Eyink_2002,DR00}. For statistical solutions we derive:
\begin{proposition}
	\label{prop:khm}
	Let $\cm$ be a Friedman--Keller statistical solution of the Navier--Stokes equations. Then the second correlation marginal $\nu^{2}$ satisfies the K\'arm\'an--Howarth--Monin relation for correlation measures:
	\begin{equation}	\label{eq:khm3}
	\begin{split}
	&\quad\sum_{ij}\int_\dom\!\int_\dom\!\int_{U^2}\!\!\xi_1^i \xi_2^j\,d\nu_{\tau,x,x+h}^2(\xi) \,dx\, \sigma^{ij}(h) \,dh
	-\sum_{ij}\int_{\dom}\!\int_\dom\!\int_{U^2}\!\!\xi_1^i \xi_2^j\,d\nu_{0,x,x+h}^2(\xi) \,dx \,\sigma^{ij}(h) \,dh \\
	&\quad+\frac12\sum_{ijk}\int_0^\tau\!\!\int_\dom\!\int_\dom\!\int_{U^2}(\xi_2^i-\xi^i_1)(\xi_2^j-\xi_1^j)(\xi_2^k-\xi^k_1)\,d\nu_{t,x,x+h}^2(\xi) \,dx \,\partial_{h^k}\sigma^{ij}(h)\,dh \,dt\\
	&=-\vis\sum_{ij}\int_0^\tau\!\!\int_\dom\!\int_\dom\!\int_{U^2}\!\!(\xi_1^i-\xi_2^i)(\xi_1^j- \xi_2^j)\,d\nu_{t,x,x+h}^2(\xi) \,dx\,\Delta_h\sigma^{ij}(h) \,dh \,dt.
	\end{split}
	\end{equation}
	for any $\tau>0$, where $\sigma=(\sigma_{ij})_{i,j=1}^3$ is any smooth, compactly supported, isotropic rank 2 tensor -- that is, any $\sigma\in C^2_c(\R^d, \R^{d\times d})$ of the form
		\begin{equation}\label{eq:sigma}
	\sigma(h)= \left(\omega_1(|h|) \mathbf{I}+ \omega_2(|h|)\hat{h}\otimes \hat{h}\right) \qquad \text{where } \hat{h}=\begin{cases}h/|h| & h\neq0 \\ 0 & h=0\end{cases}
	\end{equation}
	for $\omega_1,\omega_2\in C^2_c(\R)$.
\end{proposition}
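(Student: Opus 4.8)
The plan is to obtain \eqref{eq:khm3} by inserting a single, carefully chosen test function into the $k=2$ equation of the Friedman--Keller hierarchy. Since the isotropic tensor $\sigma$ of \eqref{eq:sigma} is not divergence free, I would use the pressure formulation \eqref{eq:FKsyspressure} (valid for arbitrary $\varphi\in C^2_c$) with $\varphi(t,x_1,x_2)=\theta(t)\,\sigma(x_2-x_1)$, where $\theta\in C^2_c([0,T))$ approximates $\mathbf 1_{[0,\tau]}$ with $\theta(0)=1$. Then $(\xi_1\otimes\xi_2):\varphi=\sum_{ij}\xi_1^i\xi_2^j\,\sigma^{ij}(x_2-x_1)$, and after the toroidal change of variables $x=x_1$, $h=x_2-x_1$ (so that $\partial_{x_1^k}\sigma(x_2-x_1)=-(\partial_{h^k}\sigma)(h)$, $\partial_{x_2^k}\sigma=+(\partial_{h^k}\sigma)(h)$ and $\Delta_{x_1}\sigma=\Delta_{x_2}\sigma=\Delta_h\sigma$) the time-derivative and initial terms collapse, as $\theta\to\mathbf 1_{[0,\tau]}$, onto the first two terms of \eqref{eq:khm3}. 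The limit in $\theta$ is legitimate because $t\mapsto\sum_{ij}\int_\dom\int_{U^2}\xi_1^i\xi_2^j\,d\nu^2_{t,x,x+h}\,\sigma^{ij}\,dh\,dx$ is continuous by Remark~\ref{rem:weakcont}, while the time-integrated terms converge by dominated convergence using the bounded support of $\cm$.

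The viscous term equals $-2\vis\sum_{ij}\int_0^\tau\!\int_\dom\!\int_{U^2}\xi_1^i\xi_2^j\,d\nu^2\,\Delta_h\sigma^{ij}$ since $\sum_{i=1}^2\Delta_{x_i}\sigma=2\Delta_h\sigma$; using $\int_{\R^d}\Delta_h\sigma\,dh=0$ (compact support) and $\sigma^{ij}=\sigma^{ji}$, this equals $\vis\sum_{ij}\int_0^\tau\!\int\!\int(\xi_2^i-\xi_1^i)(\xi_2^j-\xi_1^j)\,d\nu^2\,\Delta_h\sigma^{ij}$, which after the overall multiplication by $-1$ performed at the end becomes the right-hand side of \eqref{eq:khm3}. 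The two transport terms combine (using $\xi_1^i\xi_1^k\xi_2^j=\xi_1^i\xi_2^j\xi_1^k$) into $\sum_{ijk}\int_0^\tau\!\int_\dom\!\int_{U^2}\xi_1^i\xi_2^j(\xi_2^k-\xi_1^k)\,d\nu^2_{t,x,x+h}\,\partial_{h^k}\sigma^{ij}$.

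The core of the argument is to show that this mixed moment equals $-\tfrac12\sum_{ijk}\int\!\int(\xi_2^i-\xi_1^i)(\xi_2^j-\xi_1^j)(\xi_2^k-\xi_1^k)\,d\nu^2\,\partial_{h^k}\sigma^{ij}$. I would expand both expressions into the six $x$-averaged triple correlations (the purely one-point third moments appearing in the increment expansion cancel by homogeneity) and exploit three facts: (i) the symmetry $\sigma^{ij}=\sigma^{ji}$; (ii) the point-swap/homogeneity identity for the $x$-averaged correlation measure, which together with the oddness of $h\mapsto\partial_{h^k}\sigma^{ij}(h)$ pairs each triple correlation evaluated at $h$ with its reflection at $-h$; and (iii) the incompressibility constraint \eqref{eq:divfreeconst}, which gives $\sum_k\partial_{h^k}\int_\dom\int_{U^2}\xi_1^i\xi_1^j\xi_2^k\,d\nu^2_{t,x,x+h}\,dx=0$ (a single point-$2$ velocity in the divergence slot), so that the corresponding correlation vanishes after integration by parts in $h$. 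Combining (i)--(iii) collapses the mixed third moments onto exactly one half of the pure increment moment, with the correct sign.

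It remains to show that the pressure terms of \eqref{eq:FKsyspressure} vanish. For the isotropic tensor \eqref{eq:sigma} one computes $\sum_i\partial_{h^i}\sigma^{ij}=g(|h|)\,\widehat h^{\,j}$ for a scalar $g$, i.e.\ $\Div_h\sigma$ is radial and hence a gradient; on the other hand, incompressibility forces the relevant pressure--velocity correlations (equivalently, the divergence structure captured by the potentials $\psi_{\varphi,i}$ of Remark~\ref{rem:lerayprojection}) to be solenoidal in $h$. Pairing a gradient against a solenoidal field and integrating by parts in $h$ gives zero, so the pressure contribution drops out, and assembling the four terms and multiplying through by $-1$ reproduces \eqref{eq:khm3}. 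I expect the transport reduction of the third paragraph to be the main obstacle: the index bookkeeping that makes the mixed triple correlations combine---via homogeneity, the symmetry of $\sigma$, and the divergence-free constraint---into precisely $\tfrac12$ of the third-order structure tensor is delicate, and one must simultaneously verify the cancellation of the one-point third moments. A secondary difficulty is making the pressure cancellation rigorous inside the potential formulation \eqref{eq:FKsyspressure}, i.e.\ translating the observation that $\Div_h\sigma$ is radial into the vanishing of the Hessian-of-potential terms paired against the flux marginals.
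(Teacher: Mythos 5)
Your proposal follows essentially the same route as the paper's proof: the same test function $\varphi=\theta(t)\sigma(x_2-x_1)$ in the pressure formulation \eqref{eq:FKsyspressure} for $k=2$, the same change of variables to $(x,h)$, the same reduction of the mixed cubic moment to $-\tfrac12$ of the increment moment via translation invariance of the $x$-average, the divergence constraint \eqref{eq:divfreeconst} and the symmetry of $\sigma$ and $\nu^2$ (this is the paper's identity \eqref{eq:stupididentity}), the same symmetrization of the viscous term using $\int\Delta_h\sigma\,dh=0$, and the same isotropy argument ($\Div_h\sigma$ radial, paired against a field that is solenoidal by incompressibility) to kill the pressure terms. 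All the key cancellations you flag as the delicate points are exactly the ones the paper verifies, so the plan is correct and matches the paper's argument.
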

\begin{proof}
	We consider equation~\eqref{eq:FKsyspressure} for $k=2$ with the test function $\varphi(t,x,y)= \eta(t,y-x)$ %, where $\theta\in C^2([0,T])$
	 (for simplicity replacing $x_1$ and $x_2$ by $x$ and $y$ and writing in component form $\eta = (\eta^{ij})_{ij}$),
\begin{equation}\label{eq:marginal2}
\begin{split}
&\sum_{ij}\int_0^T\!\!\int_\dom\!\int_\dom\int_{U^2}\xi_1^i \xi_2^j\frac{\partial \eta^{ij}}{\partial t}(t,y-x)\,d\nu_{t,x,y}^2(\xi) \,dx\,dy \,dt\\
&\quad+\sum_{ij}\int_\dom\!\int_\dom\int_{U^2}\!\!\xi_1^i \xi_2^j\eta^{ij}(0,y-x)\,d\nu_{0,x,y}^2(\xi) \,dx \,dy \\
&\quad+\sum_{ijk}\int_0^T\!\!\int_\dom\!\int_\dom\int_{U^2}\xi_1^i\xi_1^k\xi_2^j\partial_{x^k}\eta^{ij}(t,y-x)\,d\nu_{t,x,y}^2(\xi) \,dx\,dy \,dt\\
&\quad+\sum_{ijk}\int_0^T\!\!\int_\dom\!\int_\dom\int_{U^2}\xi_1^i\xi_2^j\xi_2^k\partial_{y^k}\eta^{ij}(t,y-x)\,d\nu_{t,x,y}^2(\xi) \,dx\,dy \,dt\\
&=-\vis\sum_{ij}\int_0^T\!\!\int_\dom\!\int_\dom\int_{U^2}\!\!\xi_1^i \xi_2^j(\Delta_{x}+\Delta_y)\eta^{ij}(t,y-x)\,d\nu_{t,x,y}^2(\xi) \,dx \,dy \,dt\\
&\quad+\sum_{ijk}\int_0^T\!\!\int_\dom\!\int_\dom\int_{U^2}\xi_1^i\xi_1^k\xi_2^j\partial_{x^k}\partial_{x^i}\psi^j_{\eta,1}(t,y-x)\,d\nu_{t,x,y}^2(\xi) \,dx \,dy \,dt\\
&\quad+\sum_{ijk}\int_0^T\!\!\int_\dom\!\int_\dom\int_{U^2}\xi_1^i\xi_2^j\xi_2^k\partial_{y^k}\partial_{y^j}\psi^i_{\eta,2}(t,y-x)\,d\nu_{t,x,y}^2(\xi) \,dx \,dy \,dt
\end{split}
\end{equation}
Since $\psi_{\eta,1}$ solves $\Delta_x\psi_{\eta,1}=\Div_x\eta$ and $\psi_{\eta,2}$ solves $\Delta_y\psi_{\eta,2}=\Div_y\eta=-\Div_x\eta$, we have $\psi_{\eta,1}= (\Delta_x)^{-1}\Div_x\eta$ and $\psi_{\eta,2}=-(\Delta_y)^{-1}\Div_x\eta=-(\Delta_x)^{-1}\Div_x\eta$ and so $\psi_\eta\coloneqq\psi_{\eta,1}=-\psi_{\eta,2}$ (up to additive constants). Using this and changing the integration variables to $x$ and $h\coloneqq y-x$, we obtain
\begin{equation}\label{eq:marginal2h}
\begin{split}
&\sum_{ij}\int_0^T\int_\dom\!\!\int_\dom\int_{U^2}\xi_1^i \xi_2^j\frac{\partial\eta^{ij}}{\partial t}(t,h)\,d\nu_{t,x,x+h}^2(\xi) \,dx\,dh \,dt\\
&\quad+\sum_{ij}\int_\dom\!\!\int_\dom\int_{U^2}\!\!\xi_1^i \xi_2^j\eta^{ij}(0,h)\,d\nu_{0,x,x+h}^2(\xi) \,dx \,dh \\
&\quad-\sum_{ijk}\int_0^T\!\!\int_\dom\!\!\int_\dom\int_{U^2}\xi_1^i\xi_1^k\xi_2^j\partial_{h^k}\eta^{ij}(t,h)\,d\nu_{t,x,x+h}^2(\xi) \,dx\,dh \,dt\\
&\quad+\sum_{ijk}\int_0^T\!\!\int_\dom\!\!\int_\dom\int_{U^2}\xi_1^i\xi_2^j\xi_2^k\partial_{h^k}\eta^{ij}(t,h)\,d\nu_{t,x,x+h}^2(\xi) \,dx\,dh \,dt\\
&=-2\vis\sum_{ij}\int_0^T\!\!\int_\dom\!\!\int_\dom\int_{U^2}\!\!\xi_1^i \xi_2^j\Delta_h\eta^{ij}(t,h)\,d\nu_{t,x,x+h}^2(\xi) \,dx \,dh \,dt\\
&\quad+\sum_{ijk}\int_0^T\!\!\int_\dom\!\!\int_\dom\int_{U^2}\xi_1^i\xi_1^k\xi_2^j\partial_{h^k}\partial_{h^i}\psi^j_{\eta}(t,h)\,d\nu_{t,x,x+h}^2(\xi) \,dx \,dh \,dt\\
&\quad-\sum_{ijk}\int_0^T\!\!\int_\dom\!\!\int_\dom\int_{U^2}\xi_1^i\xi_2^j\xi_2^k\partial_{h^k}\partial_{h^j}\psi^i_{\eta}(t,h)\,d\nu_{t,x,x+h}^2(\xi) \,dx \,dh \,dt
\end{split}
\end{equation}
where $\psi_\eta = \psi_{\eta,1}$.
The cubic terms can be rewritten using the following simple fact (which can also be found in Frisch~\cite[equation (6.13)]{Frisch}, and in a similar, weak form in~\cite{Bedrossian2019}):
\begin{equation}\label{eq:stupididentity}
\begin{split}
	&\quad-2\sum_{ijk}\int_0^T\!\!\int_\dom\!\!\int_\dom\int_{U^2}\xi_1^i\xi_2^j(\xi_1^k-\xi^k_2)\partial_{h^k}\eta^{ij}(t,h)\,d\nu_{t,x,x+h}^2(\xi) \,dx\,dh \,dt\\
	&=\sum_{ijk}\int_0^T\!\!\int_\dom\!\!\int_\dom\int_{U^2}(\xi_1^i-\xi^i_2)(\xi_1^j-\xi_2^j)(\xi_1^k-\xi^k_2)\partial_{h^k}\eta^{ij}(t,h)\,d\nu_{t,x,x+h}^2(\xi) \,dx\,dh \,dt.
\end{split}
\end{equation}
The proof of this is postponed to the end of this proof. Using this, we can rewrite~\eqref{eq:marginal2h} as
\begin{equation}\label{eq:khm}%\tag{KHM}
\begin{split}
&\quad\sum_{ij}\int_0^T\!\!\int_\dom\!\!\int_\dom\int_{U^2}\!\!\xi_1^i \xi_2^j\frac{\partial \eta^{ij}}{\partial t}(t,h)\,d\nu_{t,x,x+h}^2(\xi) \,dx\,dh \,dt\\
&\quad+\sum_{ij}\int_\dom\!\!\int_\dom\int_{U^2}\!\!\xi_1^i \xi_2^j\eta^{ij}(0,h)\,d\nu_{0,x,x+h}^2(\xi) \,dx \,dh \\
&\quad-\frac12\sum_{ijk}\int_0^T\!\!\int_\dom\!\!\int_\dom\int_{U^2}(\xi_1^i-\xi^i_2)(\xi_1^j-\xi_2^j)(\xi_1^k-\xi^k_2)\partial_{h^k}\eta^{ij}(t,h)\,d\nu_{t,x,x+h}^2(\xi) \,dx\,dh \,dt\\
&=-2\vis\sum_{ij}\int_0^T\!\!\int_\dom\!\!\int_\dom\int_{U^2}\!\!\xi_1^i \xi_2^j\Delta_h\eta^{ij}(t,h)\,d\nu_{t,x,x+h}^2(\xi) \,dx \,dh \,dt\\
&\quad+\sum_{ijk}\int_0^T\!\!\int_\dom\!\!\int_\dom\int_{U^2}\xi_1^i\xi_1^k\xi_2^j\partial_{h^k}\partial_{h^i}\psi^j_{\eta}(t,h)\,d\nu_{t,x,x+h}^2(\xi) \,dx \,dh \,dt\\
&\quad-\sum_{ijk}\int_0^T\!\!\int_\dom\!\!\int_\dom\int_{U^2}\xi_1^i\xi_2^j\xi_2^k\partial_{h^k}\partial_{h^j}\psi^i_{\eta}(t,h)\,d\nu_{t,x,x+h}^2(\xi) \,dx \,dh \,dt.
\end{split}
\end{equation}
Since
\begin{align*}
&\quad\sum_{ij}\int_0^T\!\!\int_\dom\!\!\int_\dom\int_{U^2}\!\!\xi_1^i \xi_1^j\Delta_h\eta^{ij}(t,h)\,d\nu_{t,x,x+h}^2(\xi) \,dx \,dh \,dt\\
&= \sum_{ij}\int_0^T\!\!\int_{\dom}\!\int_{U}\!\!\xi_1^i \xi_1^j \,d\nu_{t,x}^1(\xi) \,dx \int_{\dom} \Delta_h\eta^{ij}(t,h)\,dh \,dt=0,
\end{align*}
we can rewrite
\begin{align*}
&\quad-2\vis\sum_{ij}\int_0^T\!\!\int_\dom\!\!\int_\dom\int_{U^2}\!\!\xi_1^i \xi_2^j\Delta_h\eta^{ij}(t,h)\,d\nu_{t,x,x+h}^2(\xi) \,dx \,dh \,dt\\
&=\vis\sum_{ij}\int_0^T\!\!\int_\dom\!\!\int_\dom\int_{U^2}\!\!(\xi_1^i-\xi_2^i)(\xi_1^j- \xi_2^j)\Delta_h\eta^{ij}(t,h)\,d\nu_{t,x,x+h}^2(\xi) \,dx \,dh \,dt.
\end{align*}
Moreover, we have for symmetric, smooth and compactly supported rank 2 tensors $\eta$ of the form
\begin{equation}\label{eq:eta1}
\eta(t,h)= \omega_1(t,|h|) \mathbf{I}+ \omega_2(t,|h|)\hat{h}\otimes \hat{h},
\end{equation}
with $\psi^i_\eta = -(\Delta_h)^{-1}\Div_h\eta^{i,\cdot}$,
\begin{equation}\label{eq:stupididentity2}
\begin{split}
	&\sum_{ijk}\int_0^T\!\!\int_\dom\!\!\int_\dom\int_{U^2}\xi_1^i\xi_1^k\xi_2^j\partial_{h^k}\partial_{h^i}\psi^j_{\eta}(t,h)\,d\nu_{t,x,x+h}^2(\xi) \,dx \,dh \,dt = 0,\\
	&\sum_{ijk}\int_0^T\!\!\int_\dom\!\!\int_\dom\int_{U^2}\xi_1^i\xi_2^j\xi_2^k\partial_{h^k}\partial_{h^j}\psi^i_{\eta}(t,h)\,d\nu_{t,x,x+h}^2(\xi) \,dx \,dh \,dt=0,
\end{split}
\end{equation}
whose proof is postponed to the end of this proof. Using this,~\eqref{eq:khm} becomes
\begin{equation}\label{eq:khm2}%\tag{KHM}
\begin{split}
&\sum_{ij}\int_0^T\!\!\int_\dom\!\!\int_\dom\int_{U^2}\!\!\xi_1^i \xi_2^j\frac{\partial \eta^{ij}}{\partial t}(t,h)\,d\nu_{t,x,x+h}^2(\xi) \,dx\,dh \,dt\\
&\quad+\sum_{ij}\int_\dom\!\!\int_\dom\int_{U^2}\!\!\xi_1^i \xi_2^j\eta^{ij}(0,h)\,d\nu_{0,x,x+h}^2(\xi) \,dx \,dh \\
&\quad-\frac12\sum_{ijk}\int_0^T\!\!\int_\dom\!\!\int_\dom\int_{U^2}(\xi_1^i-\xi^i_2)(\xi_1^j-\xi_2^j)(\xi_1^k-\xi^k_2)\partial_{h^k}\eta^{ij}(t,h)\,d\nu_{t,x,x+h}^2(\xi) \,dx\,dh \,dt\\
&=\vis\sum_{ij}\int_0^T\!\!\int_\dom\!\!\int_\dom\int_{U^2}\!\!(\xi_1^i-\xi_2^i)(\xi_1^j- \xi_2^j)\Delta_h\eta^{ij}(t,h)\,d\nu_{t,x,x+h}^2(\xi) \,dx \,dh \,dt.
\end{split}
\end{equation}
Let $\theta_\delta$ be a sequence of smooth, uniformly bounded functions with the property that $\theta_\delta\to \mathbf{1}_{(0,\tau]}(t)$ for every $t$ as $\delta\to 0$. If we now use a test function
\begin{equation}\label{eq:eta2}
\eta(t,h)= \sigma(h)\theta_\delta(t),
\end{equation}
where $\sigma$ is of the form \eqref{eq:sigma}, then we can use the weak continuity in time of the moments $\int_{U^k}\xi_1\otimes\dots\otimes \xi_k \,d\nu^k_{t,x}(\xi)$ to obtain for any $\tau>0$, as $\delta\to 0$,
\begin{equation}%\label{eq:khm3}%\tag{KHM}
\begin{split}
&\quad-\sum_{ij}\int_\dom\!\!\int_\dom\int_{U^2}\!\!\xi_1^i \xi_2^j\,d\nu_{\tau,x,x+h}^2(\xi) \,dx  \sigma^{ij}(h) \,dh +\sum_{ij}\int_\dom\!\!\int_\dom\int_{U^2}\!\!\xi_1^i \xi_2^j\,d\nu_{0,x,x+h}^2(\xi) \,dx \sigma^{ij}(h) \,dh \\
&\quad-\frac12\sum_{ijk}\int_0^\tau\!\!\int_\dom\!\!\int_\dom\int_{U^2}(\xi_1^i-\xi^i_2)(\xi_1^j-\xi_2^j)(\xi_1^k-\xi^k_2)\,d\nu_{t,x,x+h}^2(\xi) \,dx \partial_{h^k}\sigma^{ij}(h)\,dh \,dt\\
&=\vis\sum_{ij}\int_0^\tau\!\!\int_\dom\!\!\int_\dom\int_{U^2}\!\!(\xi_1^i-\xi_2^i)(\xi_1^j- \xi_2^j)\,d\nu_{t,x,x+h}^2(\xi) \,dx\Delta_h\sigma^{ij}(h) \,dh \,dt.\qedhere
\end{split}
\end{equation}
\end{proof}
\begin{proof}[Proof of \eqref{eq:stupididentity}]
	We expand the right hand side:
	\begin{align*}
	&\sum_{ijk}\int_0^T\!\!\int_\dom\!\!\int_\dom\int_{U^2}(\xi_1^i-\xi^i_2)(\xi_1^j-\xi_2^j)(\xi_1^k-\xi^k_2)\partial_{h^k}\eta^{ij}(t,h)\,d\nu_{t,x,x+h}^2(\xi) \,dx\,dh \,dt\\
	&= \sum_{ijk}\int_0^T\!\!\int_\dom\!\!\int_\dom\int_{U^2}(\xi_1^i\xi_1^j\xi_1^k-\xi_2^i\xi_2^j\xi_2^k)\,d\nu_{t,x,x+h}^2(\xi) \,dx \,\partial_{h^k}\eta^{ij}(t,h)\,dh \,dt\\
	&\quad +
	\sum_{ijk}\int_0^T\!\!\int_\dom\!\!\int_\dom\int_{U^2}(\xi_2^i\xi_2^j\xi_1^k-\xi_1^i\xi_1^j\xi_2^k)\partial_{h^k}\eta^{ij}(t,h)\,d\nu_{t,x,x+h}^2(\xi) \,dx\,dh \,dt\\
	&\quad -
	\sum_{ijk}\int_0^T\!\!\int_\dom\!\!\int_\dom\int_{U^2}(\xi_1^i\xi_2^j\xi_1^k-\xi_2^i\xi_1^j\xi_2^k)\partial_{h^k}\eta^{ij}(t,h)\,d\nu_{t,x,x+h}^2(\xi) \,dx\,dh \,dt\\
	&\quad - \sum_{ijk}\int_0^T\!\!\int_\dom\!\!\int_\dom\int_{U^2}(\xi_2^i\xi_1^j\xi_1^k-\xi_1^i\xi_2^j\xi_2^k)\partial_{h^k}\eta^{ij}(t,h)\,d\nu_{t,x,x+h}^2(\xi) \,dx\,dh \,dt.
	\end{align*}
	The first term on the right hand side is zero since $\eta$ is compactly supported (after changing the integration variable from $x$ to $x-h$ in one of the terms). The second term on the right hand side vanishes using the divergence constraint~\eqref{eq:divfreeconst}. Using that $\eta$ and $\nu$ are symmetric, the last two terms are identical and so
	\begin{multline*}
	\sum_{ijk}\int_0^T\!\!\int_\dom\!\!\int_\dom\int_{U^2}(\xi_1^i-\xi^i_2)(\xi_1^j-\xi_2^j)(\xi_1^k-\xi^k_2)\partial_{h^k}\eta^{ij}(t,h)\,d\nu_{t,x,x+h}^2(\xi) \,dx\,dh \,dt\\
	= -2\sum_{ijk}\int_0^T\!\!\int_\dom\!\!\int_\dom\int_{U^2}(\xi_2^i\xi_1^j\xi_1^k-\xi_1^i\xi_2^j\xi_2^k)\partial_{h^k}\eta^{ij}(t,h)\,d\nu_{t,x,x+h}^2(\xi) \,dx\,dh \,dt,
	\end{multline*}
	which proves the claim.
\end{proof}
\begin{proof}[Proof of \eqref{eq:stupididentity2}]
	We consider the second expression,
assume $\eta$ is of the form
	\begin{equation}\label{eq:eta1}
	\eta(t,h)= \omega_1(t,|h|) \mathbf{I}+ \omega_2(t,|h|)\hat{h}\otimes \hat{h},
	\end{equation}
	where $\omega_i$, $i=1,2$ are compactly supported in the torus. Then using that $\psi^i_\eta = -(\Delta_h)^{-1}\Div_h\eta^{i,\cdot}$ (the first term is treated in a similar way)
	\begin{align*}
	E &\coloneqq -\sum_{ijk}\int_0^T\!\!\int_\dom\!\!\int_\dom\int_{U^2}\xi_1^i\xi_2^j\xi_2^k\partial_{h^k}\partial_{h^j}\psi^i_{\eta}(t,h)\,d\nu_{t,x,x+h}^2(\xi) \,dx \,dh \,dt\\
	&= \sum_{ijk\ell}\int_0^T\!\!\int_\dom\!\!\int_\dom\int_{U^2}\xi_1^i\xi_2^j\xi_2^k\,d\nu_{t,x,x+h}^2(\xi) \,dx\,\partial_{h^k}\partial_{h^j}\Delta^{-1}\left(\partial_{h^\ell}\eta^{i\ell}(t,h)\right) \,dh \,dt\\
	&= \sum_{ijk\ell}\int_0^T\!\!\int_{\dom}\!\partial_{h^k}\partial_{h^j}\Delta^{-1}\left(\int_\dom\int_{U^2}\xi_1^i\xi_2^j\xi_2^k\,d\nu_{t,x,x+h}^2(\xi) \,dx\right)\partial_{h^\ell}\eta^{i\ell}(t,h)dh \,dt.
	\end{align*}
	We note that since $\omega_i$ have compact support, we can write in polar coordinates
	\begin{equation*}
	\sum_\ell \partial_{h^\ell} \eta^{i\ell}(t,h) = \biggl(\underbrace{\omega'_1(t,|h|)+\omega_2'(t,|h|)+2\frac{\omega_2(t,|h|)}{|h|}}_{\eqqcolon\,G(t,|h|)}\biggr)\hat{h}^i
	\end{equation*}
	and so
	\begin{equation*}
	\begin{split}
	E&=\sum_{ijk\ell}\int_0^T\!\!\int_{\dom}\!\partial_{h^k}\partial_{h^j}\Delta^{-1}\left(\int_\dom\int_{U^2}\xi_1^i\xi_2^j\xi_2^k\,d\nu_{t,x,x+h}^2(\xi) \partial_{h^\ell}\eta^{i\ell}(t,h)\,dx\right)dh \,dt\\
	&= \sum_{ijk}\int_0^T\!\!\int_{\dom}\!\partial_{h^k}\partial_{h^j}\Delta^{-1}\left(\int_\dom\int_{U^2}\xi_1^i\xi_2^j\xi_2^k\,d\nu_{t,x,x+h}^2(\xi) \,dx\right) G(t,|h|)\hat{h}^i \,dh \,dt\\
	&= \sum_{jk}\int_0^T\!\!\int_0^\infty\int_{|h|=r}\!\partial_{h^k}\partial_{h^j}\Delta^{-1}\left(\int_\dom\!\int_{U^2}\xi_1^i\xi_2^j\xi_2^k\,d\nu_{t,x,x+h}^2(\xi) \,dx\right) \hat{h}^i dS(h)\, G(t,r)\, dr \,dt\\
	&= \sum_{ijk}\int_0^T\!\!\int_0^\infty\int_{|h|\leq r}\!\partial_{h^i}\partial_{h^k}\partial_{h^j}\Delta^{-1}\left(\int_\dom\!\int_{U^2}\xi_1^i\xi_2^j\xi_2^k\,d\nu_{t,x,x+h}^2(\xi) \,dx\right)dh\, G(t,r)\, dr \,dt\\
	&= \sum_{jk}\int_0^T\!\!\int_0^\infty\int_{|h|\leq r}\!\partial_{h^k}\partial_{h^j}\Delta^{-1}\Div_h\left(\int_\dom\!\int_{U^2}\xi_1\xi_2^j\xi_2^k\,d\nu_{t,x,x+h}^2(\xi) \,dx\right)dh\, G(t,r)\, dr \,dt=0,
	\end{split}
	\end{equation*}
	where we used the divergence theorem in the second to last identity and the divergence constraint~\eqref{eq:divfreeconst} for the last identity.
\end{proof}
\subsection{Scaling of third order structure functions}\label{sec:ODEs}
Next, we use the KHM-relation~\eqref{eq:khm3} to derive a scaling relation for the averaged third order structure function in terms of the measure $\mu_t$,
\begin{equation}\label{eq:s03}
%\begin{split}
S^{3}_{0}(\tau,r)=\int_0^\tau\int_{\Hn}\!\fint_{\mathbb{S}^2}\int_{\dom}\big|u(x)-u(x+r n)\big|^2\big(u(x+r n)-u(x)\big)\cdot n  \,dx\, dS(n) \,d\mu_t(u)\,dt,
%\end{split}
\end{equation}
which will be more convenient to work with for this purpose. We have:
\begin{lemma}
	\label{lem:S30}
	Let $\mu_t$ be a Foia\c{s}--Prodi statistical solution of the Navier--Stokes equations (cf.\ Definition~\ref{def:foiastemamstatsol}). Then
	\begin{equation}
	\left|\frac{S^{3}_{0}(\tau,r)}{r}\right|\leq 2 E_0,
	\end{equation}
	where $E_0$ is the initial energy,
	\begin{equation}\label{eq:E0definition}
	E_0\coloneqq \int_{\hdiv}\norm{u(x)}^2_{L^2(\dom)} \,d\mu_0(u).
	\end{equation}
\end{lemma}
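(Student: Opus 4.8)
The plan is to specialise the K\'arm\'an--Howarth--Monin relation \eqref{eq:khm3} to a scalar isotropic test tensor and then localise it to the sphere of radius $r$. Since $\mu$ is a Foia\c s--Prodi solution with bounded support, Theorem~\ref{thm:FT2FKsys_s} furnishes the dual correlation measure $\cm=(\nu^1,\nu^2,\dots)$, a Friedman--Keller solution, so that Proposition~\ref{prop:khm} applies to $\nu^2$. In \eqref{eq:sigma} I would take the purely scalar choice $\sigma(h)=\phi(|h|)\mathbf I$ (i.e.\ $\omega_1=\phi$, $\omega_2=0$) with $\phi\in C^2_c(\R)$ equal to $1$ near the origin. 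Then $\partial_{h^k}\sigma^{ij}=\phi'(|h|)\widehat h^{\,k}\delta^{ij}$ and $\Delta_h\sigma^{ij}=\Delta_h\phi(|h|)\,\delta^{ij}$, so with $w=\xi_2-\xi_1$ the cubic contraction in \eqref{eq:khm3} collapses to $\phi'(|h|)\,|w|^2(w\cdot\widehat h)$, which is exactly the integrand defining $S^3_0$ in \eqref{eq:s03}. Introducing the space-averaged two-point correlations
\[
\Gamma_t(h):=\int_{\Hn}\!\int_{\dom}u(x)\cdot u(x+h)\,dx\,d\mu_t(u),\qquad R_t(h):=\int_{\Hn}\!\int_{\dom}\Grad u(x):\Grad u(x+h)\,dx\,d\mu_t(u),
\]
and passing to spherical coordinates $h=r'n$, the boundary terms of \eqref{eq:khm3} become $\int_{\dom}(\Gamma_\tau-\Gamma_0)\phi\,dh$, the cubic term becomes $2\pi\int_0^\infty (r')^2\phi'(r')\,S^3_0(\tau,r')\,dr'$, and the viscous term is $-\vis\int_0^\tau\!\int_{\dom}D_t(h)\,\Delta_h\phi\,dh\,dt$ with $D_t(h)=2\mathcal E(t)-2\Gamma_t(h)$ and $\mathcal E(t):=\int_{\Hn}\|u\|_{L^2(\dom)}^2\,d\mu_t(u)$.

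I would then take $\phi=\phi_\varepsilon\downarrow\mathbf 1_{[0,r]}$, i.e.\ $\phi_\varepsilon\equiv1$ on $[0,r]$ and supported in $[0,r+\varepsilon]$, so that $-\phi_\varepsilon'\to\delta_r$. In the limit the cubic term tends to $-2\pi r^2 S^3_0(\tau,r)$; the boundary terms tend to $\int_{B_r}(\Gamma_\tau-\Gamma_0)\,dh$; and, after integrating by parts twice in $h$, using that the constant $2\mathcal E(t)$ is annihilated by $\Delta_h$ and that $\Delta_h\Gamma_t=-R_t$ (one integration by parts in $x$ on the torus), the viscous term tends to $-2\vis\int_0^\tau\!\int_{B_r}R_t(h)\,dh\,dt$. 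Rearranging \eqref{eq:khm3} then yields the exact identity
\[
2\pi r^2\,S^3_0(\tau,r)=\int_{B_r}\bigl(\Gamma_\tau(h)-\Gamma_0(h)\bigr)\,dh+2\vis\int_0^\tau\!\!\int_{B_r}R_t(h)\,dh\,dt.
\]

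It remains to bound the right-hand side by $E_0$ with the sharp constant. Writing $\int_{B_r}\Gamma_t(h)\,dh=\int_{\Hn}\int_{\dom} u\cdot(u*\mathbf 1_{B_r})\,dx\,d\mu_t$ and applying Young's convolution inequality gives $\bigl|\int_{B_r}\Gamma_t(h)\,dh\bigr|\le|B_r|\,\mathcal E(t)$; since $\mathcal E(\tau)\le\mathcal E(0)=E_0$ (see \eqref{eq:E0definition}), the first term is bounded by $2|B_r|E_0$. For the viscous term, Cauchy--Schwarz in $x$ together with translation invariance gives $|R_t(h)|\le R_t(0)=\int_{\Hn}\|\Grad u\|_{L^2(\dom)}^2\,d\mu_t$, whence the term is bounded by $|B_r|\cdot 2\vis\int_0^\tau R_t(0)\,dt$; the strengthened mean energy inequality \eqref{eq:energystatstrong} with $\psi(s)=s$ gives $2\vis\int_0^\tau R_t(0)\,dt\le E_0-\mathcal E(\tau)\le E_0$, so this contribution is bounded by $|B_r|E_0$. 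Adding the two, $|2\pi r^2 S^3_0(\tau,r)|\le 3|B_r|E_0=4\pi r^3E_0$, and dividing by $2\pi r^3$ yields $|S^3_0(\tau,r)/r|\le 2E_0$.

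The obstacles are technical rather than conceptual. First, one must justify the limit $\phi_\varepsilon\to\mathbf 1_{B_r}$: the boundary and viscous terms pass by dominated convergence (using $|\Gamma_t|\le\mathcal E(t)$, $|R_t|\le R_t(0)$ and the $L^1_t$ bound \eqref{eq:h1linf}), but the cubic term requires evaluating $S^3_0(\tau,\cdot)$ at the single radius $r$, hence continuity of $r'\mapsto S^3_0(\tau,r')$; this follows from continuity of translations in $L^2$ and $L^6$ combined with the interpolation $\|u\|_{L^3}\lesssim\|u\|_{L^2}^{1/2}\|u\|_{H^1}^{1/2}$, the embedding $H^1(\mathbb T^3)\hookrightarrow L^6$, and the integrability noted in Remark~\ref{rem:integrability}, which simultaneously guarantee that $S^3_0$ is well defined. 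Second, the integration by parts producing $R_t$ should be justified at $H^1$ regularity, which is cleanest through the convolution structure of $\Gamma_t$, whose distributional $h$-Laplacian is the continuous bounded function $-R_t$. Finally, the compact support of $\sigma$ is compatible with the torus only for $r$ below the injectivity radius, so for larger $r$ one periodises $\phi$ or invokes the trivial moment bound. The sharp constant $2E_0$ depends on grouping the terms precisely as above: bounding $\int_{B_r}\Gamma_t$ directly by $|B_r|\mathcal E(t)$, rather than splitting it through $\mathcal E$ and $D_t$ separately, is what prevents a lossy constant.
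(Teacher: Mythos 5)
Your proposal is correct and follows essentially the same route as the paper: the same isotropic scalar test tensor $\sigma(h)=\omega(|h|)\mathbf{I}$ in the K\'arm\'an--Howarth--Monin relation, the same resulting identity expressing $r^2 S^{3}_{0}(\tau,r)$ as a ball average of the two-point correlations plus the viscous contribution, and the same termwise Cauchy--Schwarz and energy-inequality bounds producing the constant $2E_0$. The only cosmetic difference is that you localise by letting $\phi_\varepsilon\downarrow\mathbf{1}_{[0,r]}$ directly, whereas the paper first recasts the relation as a distributional ODE in $r$ and then integrates it; the two manoeuvres are equivalent.
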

\begin{proof}
	We take a test function of the form $\sigma(h)=\omega(|h|)\mathbf{I}$ in the KHM-relation~\eqref{eq:khm3} with $\omega$ having compact support in $[0,0.5)$. A little bit of algebra yields (denoting $\hat{h}_{k}=h^k/|h|$)
	\begin{equation*}
	%\begin{split}
	%&
	\partial_{h^k}\omega(|h|)=\omega'(|h|)\hat{h}^k,%\quad  \Delta_h\omega(|h|) =\omega''(|h|)+\frac{d-1}{|h|}\omega'(|h|),
	\end{equation*}
	and so~\eqref{eq:khm3} for this particular test function reads
	\begin{equation}\label{eq:khmid}%\tag{KHM}
	\begin{split}
	&\int_\dom\int_\dom\!\int_{U^2}\!\!\xi_1\cdot\xi_2 \,d\nu_{\tau,x,x+h}^2(\xi) \,dx  \omega(|h|) \,dh
	-\int_\dom\int_\dom\!\int_{U^2}\!\!\xi_1\cdot \xi_2\,d\nu_{0,x,x+h}^2(\xi) \,dx \omega(|h|) \,dh \\
	&+\frac12\int_0^\tau\!\!\int_\dom\int_\dom\!\int_{U^2}|\xi_1-\xi_2|^2(\xi_2-\xi_1)\cdot \hat{h}\,d\nu_{t,x,x+h}^2(\xi) \,dx \omega'(|h|)\,dh \,dt\\
	=&-\vis\int_0^\tau\!\!\int_\dom\int_\dom\!\int_{U^2}\!\!|\xi_1-\xi_2|^2 \,d\nu_{t,x,x+h}^2(\xi) \,dx\Delta_h\omega(|h|) \,dh \,dt.
	\end{split}
	\end{equation}
	In terms of the statistical solution $(\mu_t)_{t>0}$ this is
	\begin{equation}	\label{eq:khmidu}
	\begin{split}
	&\quad\int_{\Hn}\!\int_\dom\int_\dom\!\! u(x)\cdot u(x+h) \,dx  \omega(|h|) \,dh \,d\mu_\tau(u)
	-\int_{\Hn}\int_\dom\int_\dom\! u(x)u(x+h) \,dx\, \omega(|h|) \,dh \,d\mu_0(u) \\
	&\quad+\frac12\int_0^\tau\!\!\int_{\Hn}\int_\dom\int_\dom\!|u(x)-u(x+h)|^2(u(x+h)-u(x))\cdot \hat{h}\, \,dx \omega'(|h|)\,dh \,d\mu_t(u) \,dt\\
	&=-\vis\int_0^\tau\!\!\int_{\Hn}\int_\dom\int_\dom\!|u(x)-u(x+h)|^2 \,dx\Delta_h\omega(|h|) \,dh \,d\mu_t(u) \,dt.
	\end{split}
	\end{equation}
	The last term can also be written as
	\begin{equation}\label{eq:diffusioncomputation}
	\begin{split}
	&\vis\int_0^\tau\!\!\int_{\Hn}\int_\dom\int_\dom\!|u(x)-u(x+h)|^2 \,dx\Delta_h\omega(|h|) \,dh \,d\mu_t(u) \,dt\\
	& = 2\vis\int_0^\tau\!\!\int_{\Hn}\int_\dom\int_\dom\!\Grad_h u(x+h)\cdot(u(x)-u(x+h)) \,dx\Grad_h\omega(|h|) \,dh \,d\mu_t(u) \,dt\\
	& = 2\vis\int_0^\tau\!\!\int_{\Hn}\int_\dom\int_\dom\!\Grad_x u(x+h)\cdot(u(x)-u(x+h)) \,dx\Grad_h\omega(|h|) \,dh \,d\mu_t(u) \,dt\\
	& = 2\vis\int_0^\tau\!\!\int_{\Hn}\int_\dom\int_\dom\!\Grad_x u(x)\cdot(u(x-h)-u(x)) \,dx\Grad_h\omega(|h|) \,dh \,d\mu_t(u) \,dt\\
	& = -2\vis\int_0^\tau\!\!\int_{\Hn}\int_\dom\int_\dom\!\Grad_x u(x): \Grad_h u(x-h) \,dx \omega(|h|) \,dh \,d\mu_t(u) \,dt\\
	& = 2\vis\int_0^\tau\!\!\int_{\Hn}\int_\dom\int_\dom\!\Grad_x u(x+h): \Grad_x u(x) \,dx \omega(|h|) \,dh \,d\mu_t(u) \,dt
	\end{split}
	\end{equation}
Changing to spherical coordinates %\todo{USF: Here we would need $D=\R^d$, or $\omega$ to have sufficiently small support.}\
 and using the definition of $S^{3}_0$,~\eqref{eq:s03}, we obtain
\begin{align*}
&\frac12\int_0^\infty S_0^3(\tau,r) r^2\omega'(r)dr\\
&=-2\vis\int_0^\infty\int_0^\tau\!\!\int_{\Hn}\fint_{\mathbb{S}^2}\int_{\dom}\!\Grad_x u(x):\Grad_x u(x+r n) \,dxdS(n)\,d\mu_t(u) \,dt r^2\omega(r) dr \\
&\quad-\int_0^\infty\int_{\Hn}\!\fint_{\mathbb{S}^2}\int_{\dom}\! u(x)\cdot u(x+r n) \,dxdS(n) \,d\mu_\tau(u)r^2\omega(r) dr \\
&\quad+\int_0^\infty\int_{\Hn}\fint_{\mathbb{S}^2}\int_{\dom}\! u(x)\cdot u(x+r n) \,dx dS(n) \,d\mu_0(u) r^2\omega(r) dr.
\end{align*}
We denote
\begin{align*}
m_2(\tau,r)&\coloneqq \int_{\Hn}\!\fint_{\mathbb{S}^2}\int_{\dom}\! u(x)\cdot u(x+r n) \,dxdS(n) \,d\mu_\tau(u)\\
v(\tau,r)&\coloneqq \int_0^\tau\!\!\int_{\Hn}\fint_{\mathbb{S}^2}\int_{\dom}\!\Grad_x u(x):\Grad_x u(x+r n) \,dxdS(n)\,d\mu_t(u) \,dt.
\end{align*}
Since $(\mu_t)_{t>0}$ is supported on functions in $L^\infty([0,\infty);\hdiv)\cap L^2([0,\infty);\V)$, $S_0^3$ is a continuous function. Moreover, notice that due to the \emph{a priori} bounds following from the energy inequality~\eqref{eq:energystatstrong}, both $m_2$ and $v$ are uniformly bounded and continuous in $r$ and $\tau$ (for the continuity in $\tau$ of the first quantity, one needs weak time continuity of the moments which follows the fact that they satisfy the equations~\eqref{eq:khm3} where all the terms are integrable). We obtain
\begin{align*}
\frac12\int_0^\infty S_0^3(\tau,r) r^2\omega'(r)dr &=-2\vis\int_0^\infty v(\tau,r)r^2\omega(r) dr \\
&\quad-\int_0^\infty m_2(\tau,r)r^2\omega(r) dr +\int_0^\infty m_2(0,r) r^2\omega(r) dr,
\end{align*}
which is an ODE in the sense of distributions for $S_0^3(\tau,\cdot)$, and because the right hand side is uniformly bounded and continuous, we can consider it in the strong sense (note that boundary terms when integrating the $S_0^3$ term by parts vanish):
\begin{equation*}
\frac{1}{r^2}\partial_r(r^2 S_0^3(r)) = 4\eps v(\tau,r)+2 m_2(\tau,r)-2 m_2(0,r),
\end{equation*}%\todo{signs messed up?}
or
\begin{equation*}
\frac{S_0^3(r)}{r} = \frac{2}{r^3}\int_0^r s^2\big(2\eps v(\tau,s)+m_2(\tau,s)-m_2(0,s)\big) \,ds.
\end{equation*}
The energy inequality~\eqref{eq:energystatstrong} and the Cauchy--Schwarz inequality imply that $2\vis v(\tau,s)$ and $m_2(\tau,s)$ are both bounded by $E_0$ (defined in \eqref{eq:E0definition}), uniformly in $\tau,s$. Hence,
\begin{equation}\label{eq:S_0bound}
\left|\frac{S_0^3(r)}{r}\right|\leq \frac{2}{r}\int_0^r   \left(\frac{s}{r}\right)^2\big(2\eps|v(\tau,s)|+|m_2(\tau,s)|+|m_2(0,s)|\big) \,ds\leq 2E_0.
\end{equation}
(See also~\cite[Proposition 1.9]{Bedrossian2019} for a related result.)
\end{proof}

Using this lemma, we can derive a scaling relation for the averaged longitudinal structure function $S^3_\|$, where
\begin{equation}
\label{eq:Spparallel}
S_\|^p(\tau,r)=\int_0^\tau\int_{\Hn}\!\!\fint_{\mathbb{S}^2}\int_{\dom}\big((u(x+r n)-u(x))\cdot n\big)^p \,dxdS(n) \,d\mu_t(u)\,dt.
\end{equation}
\begin{lemma}
	\label{lem:boundS3parallel}
		Let $\mu_t$ be a Foia\c{s}--Prodi statistical solution of the Navier--Stokes equations (cf.\ Definition~\ref{def:foiastemamstatsol}). Then
	\begin{equation}
	\left|\frac{S^{3}_\|(\tau,r)}{r}\right|\leq 2 E_0,
	\end{equation}
	where $C>0$ is some constant independent of $\eps$ and $E_0$ is the initial energy,
	\begin{equation}
	E_0\coloneqq \int_{\hdiv}\norm{u(x)}^2_{L^2(\dom)} \,d\mu_0(u)
	\end{equation}
\end{lemma}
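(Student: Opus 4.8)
The plan is to mirror the proof of Lemma~\ref{lem:S30}, but to feed the KHM relation~\eqref{eq:khm3} with the \emph{longitudinal} isotropic test tensor
\[
\sigma(h) = \omega(|h|)\,\hat h\otimes\hat h, \qquad \omega\in C^2_c(\R),
\]
i.e. $\omega_1\equiv 0$, $\omega_2=\omega$ in~\eqref{eq:sigma}, rather than the ``trace'' tensor $\omega\mathbf{I}$ used to extract $S^3_0$. This choice forces the longitudinal cube $((\xi_2-\xi_1)\cdot\hat h)^3$, and hence $S^3_\|$, to appear in the cubic term of~\eqref{eq:khm3}. Note that no isotropy of the solution is needed: the sphere average is produced entirely by the radial test function, so the argument is compatible with weak anisotropy. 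Since the cubic term will also produce a multiple of $S^3_0$, the bound from Lemma~\ref{lem:S30} is exactly what closes the estimate.

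The key computation is the cubic term. Using $\partial_{h^k}\hat h^i = (\delta_{ik}-\hat h^i\hat h^k)/|h|$ one finds
\[
\partial_{h^k}\bigl(\omega(|h|)\hat h^i\hat h^j\bigr) = \omega'(|h|)\hat h^i\hat h^j\hat h^k + \frac{\omega(|h|)}{|h|}\bigl(\delta_{ik}\hat h^j+\delta_{jk}\hat h^i-2\hat h^i\hat h^j\hat h^k\bigr),
\]
so contracting with $(\xi_2-\xi_1)^i(\xi_2-\xi_1)^j(\xi_2-\xi_1)^k$ gives $\omega'\,((\xi_2-\xi_1)\cdot\hat h)^3 + \tfrac{2\omega}{|h|}\bigl(|\xi_2-\xi_1|^2(\xi_2-\xi_1)\cdot\hat h - ((\xi_2-\xi_1)\cdot\hat h)^3\bigr)$. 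Passing to spherical coordinates $h=rn$ and using~\eqref{eq:s03} and~\eqref{eq:Spparallel}, the cubic term becomes $2\pi\int_0^\infty\bigl[\omega'S^3_\|+\tfrac{2\omega}{r}(S^3_0-S^3_\|)\bigr]r^2\,dr$. The boundary-in-time terms of~\eqref{eq:khm3} reduce to the sphere-averaged longitudinal correlation $\bar R_\|(t,r)=\fint_{\mathbb{S}^2}\int_{\Hn}\int_\dom (u(x)\cdot n)(u(x+rn)\cdot n)\,dx\,d\mu_t(u)\,dS(n)$, which satisfies $|\bar R_\|(t,r)|\le E_0$ by Cauchy--Schwarz together with the energy inequality~\eqref{eq:energystatstrong} for $\psi(s)=s$; the viscous term, after the same integration by parts as in~\eqref{eq:diffusioncomputation}, is controlled by the mean dissipation and is likewise $O(E_0)$.

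Testing against all admissible $\omega$ yields a distributional identity for $S^3_\|$. After one integration by parts the longitudinal cubic contribution combines as $\partial_r(r^2S^3_\|)+2rS^3_\| = r^{-2}\partial_r(r^4 S^3_\|)$, so the natural object is $r^4S^3_\|$. Exactly as in Lemma~\ref{lem:S30}, the right-hand side is bounded and continuous, so the identity holds strongly and may be integrated from $0$ (the boundary term at $r=0$ vanishing because $S^3_\|\to0$ there), giving
\[
r^4S^3_\|(\tau,r) = \int_0^r\Bigl(2\bigl(\bar R_\|(\tau,s)-\bar R_\|(0,s)\bigr)s^4 + 2s^3S^3_0(\tau,s) - \text{(viscous)}\Bigr)\,ds.
\]
Bounding $|\bar R_\||\le E_0$, inserting $|S^3_0(\tau,s)|\le 2E_0 s$ from Lemma~\ref{lem:S30}, and estimating the viscous contribution by $E_0$ via the energy inequality, the geometric factor $r^{-5}\int_0^r s^4\,ds=\tfrac15$ then yields $|S^3_\|(\tau,r)/r|\le 2E_0$.

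The main obstacle is the viscous term: one must compute $\Delta_h(\omega\,\hat h\otimes\hat h)$ and show, via the manipulations in~\eqref{eq:diffusioncomputation} and the energy inequality, that its contraction with the second increment moment $\int_\dom\int_{U^2}(\xi_1-\xi_2)\otimes(\xi_1-\xi_2)\,d\nu^2_{t,x,x+h}\,dx$ is bounded by a fixed multiple of $E_0$, uniformly in $\vis$ and $r$. Carefully tracking this bound, together with the regularity of $\sigma$ at the origin (which forces $\omega(r)=O(r^2)$ so that $\sigma\in C^2_c$ and all boundary terms in the integrations by parts vanish), is what pins down the final constant $2E_0$.
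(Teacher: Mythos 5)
Your proposal follows essentially the same route as the paper's proof: the same longitudinal test tensor $\sigma(h)=\omega(|h|)\hat h\otimes\hat h$ in the K\'arm\'an--Howarth--Monin relation, the same computation of $\partial_{h^k}(\omega(|h|)\hat h^i\hat h^j)$ splitting the cubic term into an $S^3_\|$ part and an $S^3_0$ part, the same reduction to a first-order ODE for $r^4S^3_\|(\tau,\cdot)$ integrated from $r=0$, and the same closing of the estimate via Lemma~\ref{lem:S30} together with the energy bounds on the quadratic and viscous terms. The argument is correct (and your remark that $C^2$ regularity of $\sigma$ at the origin requires $\omega$ to vanish there is a careful touch the paper leaves implicit).
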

\begin{proof}
	Again, we start with the KHM relation~\eqref{eq:khm3}. This time we use the test function $\sigma(h) = \omega(|h|)\hat{h}\otimes \hat{h}$ where $\omega\in C_c^\infty(\R)$ is an even function. We have
	\begin{equation*}
	\begin{split}
	\partial_{h^k} (\omega(|h|)\hat{h}^i \hat{h}^j)& = \omega'(|h|)\hat{h}^i\hat{h}^j\hat{h}^k + \frac{1}{|h|}\left(\delta_{ik}\hat{h}^j+\delta_{jk}\hat{h}^i-2\hat{h}^i\hat{h}^j\hat{h}^k\right)\omega(|h|)\\
	& =\left(\omega'(|h|)-2\frac{\omega(|h|)}{|h|}\right)\hat{h}^i\hat{h}^j\hat{h}^k + \frac{1}{|h|}\left(\delta_{ik}\hat{h}^j+\delta_{jk}\hat{h}^i\right)\omega(|h|)
	\end{split}
	\end{equation*}
	Therefore,~\eqref{eq:khm3} becomes
	\begin{equation}\label{eq:khmhat}%\tag{KHM}
	\begin{split}
	&-\int_\dom\int_\dom\!\int_{U^2}\!\!\xi_1\cdot\hat{h}\, \xi_2\cdot \hat{h} \,d\nu_{\tau,x,x+h}^2(\xi) \,dx  \omega(|h|) \,dh\\
	&	+\int_\dom\int_\dom\!\int_{U^2}\!\!\xi_1\cdot \hat{h} \,\xi_2 \cdot \hat{h} \, \,d\nu_{0,x,x+h}^2(\xi) \,dx \omega(|h|) \,dh \\
	&-\frac12\int_0^\tau\!\!\int_\dom\int_\dom\!\int_{U^2}\big((\xi_2-\xi_1)\cdot \hat{h}\big)^3\,d\nu_{t,x,x+h}^2(\xi) \,dx\left(\omega'(|h|)-2|h|^{-1}\omega(|h|)\right) \,dh \,dt\\
	&+\int_0^\tau\!\!\int_\dom\int_\dom\!\int_{U^2}|\xi_1-\xi_2|^2(\xi_1-\xi_2)\cdot \hat{h} \,d\nu_{t,x,x+h}^2(\xi) \,dx |h|^{-1}\omega(|h|) \,dh \,dt\\
	=&\ \vis\sum_{ij}\int_0^\tau\!\!\int_\dom\int_\dom\!\int_{U^2}\!\!(\xi_1^i-\xi_2^i)(\xi_1^j- \xi_2^j)\,d\nu_{t,x,x+h}^2(\xi) \,dx\Delta_h\sigma^{ij}(h) \,dh \,dt.
	\end{split}
	\end{equation}
	Again, in terms of $(\mu_t)_{t>0}$, this means
	\begin{equation}\label{eq:khmhatu}%\tag{KHM}
	\begin{split}
	&\quad-\int_{\Hn}\int_\dom\int_\dom\! u(x)\cdot\hat{h}\, u(x+h)\cdot \hat{h} \, \,dx  \omega(|h|) \,dh \,d\mu_\tau(u)\\
	&\quad+\int_{\Hn}\int_\dom\int_\dom\!u(x)\cdot \hat{h} \,u(x+h) \cdot \hat{h} \,dx \omega(|h|) \,dh \,d\mu_0(u) \\
	&\quad-\frac12\int_0^\tau\int_\dom\int_{\Hn}\!\!\int_\dom\big((u(x+h)-u(x))\cdot \hat{h}\big)^3 \,dx \,d\mu_t(u)\left(\omega'(|h|)-2|h|^{-1}\omega(|h|)\right) \,dh \,dt\\
	&\quad+\int_0^\tau\int_\dom\int_{\Hn}\!\!\int_\dom|u(x)-u(x+h)|^2\big(u(x)-u(x+h)\big)\cdot \hat{h}  \,dx \,d\mu_t(u) |h|^{-1}\omega(|h|) \,dh \,dt\\
	&=\vis\sum_{ij}\int_0^\tau\int_\dom\int_{\Hn}\!\!\int_\dom\!\big(u^i(x)-u^i(x+h)\big)\big(u^j(x)- u^j(x+h)\big) \,dx \,d\mu_t(u)\Delta_h\sigma^{ij}(h) \,dh \,dt.
	\end{split}
	\end{equation}
	Similar to the computation in \eqref{eq:diffusioncomputation}, we have
	\begin{align*}
	&\quad\vis\sum_{ij}\int_0^\tau\int_\dom\int_{\Hn}\!\!\int_\dom\!\big(u^i(x)-u^i(x+h)\big)\big(u^j(x)- u^j(x+h)\big) \,dx \,d\mu_t(u)\Delta_h\sigma^{ij}(h) \,dh \,dt\\
	&=2\vis\sum_{ij}\int_0^\tau\int_\dom\int_{\Hn}\!\!\int_\dom\!\Grad_xu^i(x+h)\Grad_x u^j(x) \,dx \,d\mu_t(u)\sigma^{ij}(h) \,dh \,dt\\
	&=2\vis\int_0^\tau\int_\dom\int_{\Hn}\!\!\int_\dom\!\big(\Grad_x u(x+h)\cdot\hat{h}\big)\cdot\big(\Grad_x u(x)\cdot\hat{h}\big) \,dx \,d\mu_t(u)\omega(|h|) \,dh \,dt,
	\end{align*}
	so equation~\eqref{eq:khmhatu} becomes (after switching to polar coordinates)
	\begin{equation}\label{eq:lala}
	\begin{split}
	&\quad-\int_0^\infty\int_{\Hn}\fint_{\mathbb{S}^2}\int_{\dom}\! u(x)\cdot n\, u(x+r n)\cdot n \, \,dx\,dS(n)  \,d\mu_\tau(u)r^2\omega(r) \,dr\\
	&\quad+\int_0^\infty\int_{\Hn}\fint_{\mathbb{S}^2}\int_{\dom}\! u(x)\cdot n \,u(x+r n) \cdot n \,dx \,dS(n) \,d\mu_0(u)r^2\omega(r) \,dr\\
	&\quad-\frac12\int_0^\infty\int_0^\tau\int_{\Hn}\!\!\fint_{\mathbb{S}^2}\int_{\dom}\big(\big(u(x+r n)-u(x)\big)\cdot n\big)^3 \,dx\,dS(n) \,d\mu_t(u)\,dt\left(r^2\omega'(r)-2r\omega(r)\right) \,dr \\
	&\quad+\int_0^\infty\int_0^\tau\int_{\Hn}\!\!\fint_{\mathbb{S}^2}\int_{\dom}|u(x)-u(x+r n)|^2\big(u(x)-u(x+r n)\big)\cdot n  \,dx \,dS(n) \,d\mu_t(u)\,dt r\omega(r) \,dr\\
	&=2\vis\int_0^\infty\int_0^\tau\int_{\Hn}\!\!\fint_{\mathbb{S}^2}\int_{\dom}\!\big(\Grad_x u(x+r n)\cdot n\big)\cdot\big(\Grad_x u(x)\cdot n\big) \,dx \,dS(n) \,d\mu_t(u)\,dt r^2\omega(r) \,dr
	\end{split}
	\end{equation}
Denote
\begin{align*}
\widetilde{m}_2(\tau,r)&\coloneqq\int_{\Hn}\fint_{\mathbb{S}^2}\int_{\dom}\! u(x)\cdot n\, u(x+r n)\cdot n \, \,dx \,dS(n)  \,d\mu_\tau(u)\\
\widetilde{v}(\tau,r)&\coloneqq \int_0^\tau\int_{\Hn}\!\!\fint_{\mathbb{S}^2}\int_{\dom}\!\big(\Grad_x u(x+r n)\cdot n\big)\cdot\big(\Grad_x u(x)\cdot n\big) \,dx \,dS(n) \,d\mu_t(u)\,dt.
\end{align*}
Writing $\omega'(r)-2r^{-1}\omega(r) = r^2 \frac{d}{dr}\big(\omega(r)r^{-2}\big)$, \eqref{eq:lala} becomes
\begin{align*}
&\quad\frac12\int_0^\infty r^4 S_\|^3(\tau,r)\partial_r\big(r^{-2}\omega(r)\big) \,dr +\int_0^\infty S_0^3(\tau,r) r\omega(r) \,dr\\
&=-\int_0^\infty\widetilde{m}_2(\tau,r)r^2\omega(r) \,dr
+\int_0^\infty\widetilde{m}_2(0,r)r^2\omega(r) \,dr
-2\vis\int_0^\infty\widetilde{v}(\tau,r)r^2\omega(r) \,dr.
\end{align*}
Again, we note that due to the estimates from the energy inequality~\eqref{eq:energystatstrong}, $S^3_\|$, $\widetilde{m}$ and $\widetilde{v}$ are continuous and bounded quantities in $\tau$ and $r$. And so we can consider this ODE in the sense of distributions as an ODE in the strong sense,
\begin{equation*}
%\partial_r S_\|^3(\tau,r) = 2\left(\frac{S_0^3(\tau,r)}{r} +\widetilde{m}_2(\tau,r)-\widetilde{m}_2(0,r)+2\vis\widetilde{v}(\tau,r)\right),
\partial_r \big(r^4S_\|^3(\tau,r)\big) = 2r^4\left(\frac{S_0^3(\tau,r)}{r} +\widetilde{m}_2(\tau,r)-\widetilde{m}_2(0,r)+2\vis\widetilde{v}(\tau,r)\right),
\end{equation*}
or
\begin{equation}
S_\|^3(\tau,r) = 2\int_0^r \frac{s^4}{r^4}\left(\frac{S_0^3(\tau,s)}{s} +\widetilde{m}_2(\tau,s)-\widetilde{m}_2(0,s)+2\vis\widetilde{v}(\tau,s)\right)\,ds.
\end{equation}
By the energy bound and Cauchy--Schwarz inequality, $\widetilde{m}_2$ and $\vis\widetilde{v}$ are uniformly bounded in $\vis$ for all $s,\tau>0$. Moreover from Lemma~\ref{lem:S30}, we have that $s^{-1}S_0^3(s,\tau)$ is uniformly bounded in $\vis$ by $2 E_0$. Hence,
\begin{equation}
\label{eq:Sparallelbound}
\left|\frac{S_\|^3(\tau,r)}{r}\right|\leq 2 E_0
\end{equation}
for some $C>0$ independent of $\epsilon$.
\end{proof}
\begin{remark}
	Combining~\eqref{eq:S_0bound} and~\eqref{eq:Sparallelbound}, we also obtain a uniform bound on $r^{-1} S^3_\perp(\tau,r)$, where $S^3_\perp$ is the transversal structure function
	\begin{equation}
	S^3_\perp(\tau,r)\coloneqq\int_0^\tau\int_{\Hn}\!\!\fint_{\mathbb{S}^2}\int_{\dom}|\delta^\perp_{r n}u(x)|^2\delta_{r n} u(x)\cdot n  \,dx \,dS(n) \,d\mu_t(u)\,dt = S^3_0(\tau,r)-S^3_{\|}(\tau,r),
	\end{equation}
	where
	\begin{equation}
\delta_h u(x) =u(x+h)-u(x),\quad  	\delta^\perp_{h}u = (\mathbf{I}-\hat{h}\otimes\hat{h})\delta_h u, \quad \hat{h}\coloneqq \frac{h}{|h|}.
	\end{equation}
	None of these quantities has a sign and therefore the previously derived bounds do not imply compactness without further assumptions.
\end{remark}

\subsection{Scaling assumption}\label{sec:scaling}
In order to pass to the limit $\vis\to 0$, we need to an additional assumption about the behavior of structure functions. Specifically, we need
\begin{assumption}[Weak statistical scaling]\label{ass:scaling}
	For any $\vis>0$, let $\mu^\vis$ be a Foia\c{s}--Prodi statistical solution of the incompressible Navier--Stokes equations. We assume that for $r\ll 1$, the second and third order longitudinal structure functions \eqref{eq:Spparallel} are related by
	\begin{equation}
	\left|S^2_{\|}(\tau,r)\right|\leq C \left|S^3_{\|}(\tau,r)\right|^{\alpha},
	\end{equation}
	where $C$ is a constant independent of $\vis$ and $\alpha>0$.
\end{assumption}

\begin{remark}[Weak statistical scaling]\label{rem:scaling}
Assumption \ref{ass:scaling} is inspired by the following stronger scaling assumption often encountered in turbulence theory: For any $p,q$ with $q\geq p\geq 1$, the $p$-th and $q$-th order longitudinal structure functions \eqref{eq:Spparallel} are related by
	\begin{equation}\label{eq:scaling2}
	\left|S^p_{\|}(\tau,r)\right|\leq C \left|S^q_{\|}(\tau,r)\right|^{\frac{\lambda(p)}{\lambda(q)}},
	\end{equation}
	where $C$ is a constant independent of $\vis$ and $\lambda(p)>0$ for $p\leq p_0$ where $3\leq p_0\in \R\cup\{\infty\}$. In Kolmogorov's 1941 (``K41'') theory~\cite{K41a,K41b,K41c}, $\lambda(p)=p$. However, this cannot be confirmed with physical experiments~\cite{Anselmet1984,Saw2018}. Various physicists therefore suggested intermittency corrections to account for the deviation from Kolmogorov's original theory, among others, Kolmogorov himself in 1962~\cite{Kolmogorov1962} in his refined theory of turbulence, Frisch et al. the $\beta$-model~\cite{Frisch1978}, as well as Novikov and Stewart~\cite{Novikov1964}. Assumption~\eqref{eq:scaling2} can also accommodate the frequently used model by She and Leveque~\cite{SheLeveque1994} who suggested
	\begin{equation}
	\lambda(p)= \frac{p}{9}+2\left(1-\left(\frac23\right)^{p/3}\right).
	\end{equation}
\end{remark}

%\end{remark}
\begin{remark}
	Combining the bound on the third order structure function in Lemma~\ref{lem:boundS3parallel} with Assumption~\ref{ass:scaling}, we obtain
	\begin{equation*}
	\left|S^2_{\|}(\tau,r)\right|\leq C r^{\alpha}.
	\end{equation*}
\end{remark}

We will combine Assumption \ref{ass:scaling} with the following lemma, which is Lemma 1 by Drivas~\cite{Drivas2021}, translated to the setting of statistical solutions. The proof is given in Appendix~\ref{sec:appendixC}:
\begin{lemma}[Weak anisotropy]\label{lem:Drivas}
	Let $\mu_t$ be a statistical solution of the Navier--Stokes equation. Then $\mu$ satisfies
	\begin{equation}\label{eq:WA}
	\begin{split}
	&3\int_0^T\int_{\dom}\int_{\hdiv}\fint_{\partial B_r(0)} (\delta_{rn} u\cdot n)^2dS(n)\,dx \,d\mu_t(u)\,dt\\
	&=\int_0^T\int_{\dom}\int_{\hdiv}\fint_{B_r(0)} |\delta_\ell u(x)|^2 d\ell \,dx \,d\mu_t(u)\,dt .
	\end{split}
	\end{equation}
\end{lemma}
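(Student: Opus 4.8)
The plan is to collapse both sides of \eqref{eq:WA} onto a single object — the direction-resolved second-order structure-function tensor — and then produce the identity from one application of the divergence theorem, with the factor $3$ emerging as the spatial dimension $d=3$. First I would define, for a separation vector $\ell$ in a coordinate ball around the origin of $\torus^d$,
\[
\Sigma^{ij}(\ell) \coloneqq \int_0^T\!\int_\dom\!\int_{\Hn} \delta_\ell u^i(x)\,\delta_\ell u^j(x)\,d\mu_t(u)\,dx\,dt, \qquad \delta_\ell u = u(\,\cdot\,+\ell)-u(\,\cdot\,).
\]
Since $\widehat{\ell}_i\widehat{\ell}_j$ depends only on $\ell$, the left-hand integrand of \eqref{eq:WA} integrates (over $u,x,t$) to the longitudinal component $\sum_{ij}\widehat{\ell}_i\widehat{\ell}_j\,\Sigma^{ij}(\ell)$ evaluated on $\partial B_r(0)$, while the right-hand integrand integrates to the trace $\sum_i\Sigma^{ii}(\ell)$. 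Thus it suffices to prove $3\fint_{\partial B_r(0)}\sum_{ij}\widehat{\ell}_i\widehat{\ell}_j\Sigma^{ij}\,dS = \fint_{B_r(0)}\sum_i\Sigma^{ii}\,d\ell$.

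The key step is that $\Sigma^{ij}$ is \emph{divergence free in the separation variable}: $\sum_i\partial_{\ell_i}\Sigma^{ij}(\ell)=0$ for every $j$. To establish this I would expand, using translation invariance of the $x$-integral on the torus,
\[
\Sigma^{ij}(\ell) = 2R^{ij}(0) - R^{ij}(\ell) - R^{ji}(\ell), \qquad R^{ij}(\ell)\coloneqq \int_0^T\!\int_\dom\!\int_{\Hn} u^i(x+\ell)\,u^j(x)\,d\mu_t(u)\,dx\,dt,
\]
and note that $\sum_i\partial_{\ell_i}R^{ij}(\ell) = \int_0^T\!\int_\dom\!\int_{\Hn} (\Div u)(x+\ell)\,u^j(x)\,d\mu_t\,dx\,dt = 0$, because $\mu_t$ is supported on divergence-free fields. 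Combined with the elementary relation $R^{ji}(\ell)=R^{ij}(-\ell)$ (again from translation invariance), this yields $\sum_i\partial_{\ell_i}\Sigma^{ij}=0$. (Equivalently, one may differentiate $\Sigma^{ij}$ directly and integrate by parts in $x$, with the convective and transport terms vanishing on the periodic domain.)

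With divergence-freeness in hand, the identity is immediate. Setting $V_i(\ell)\coloneqq \sum_j \ell_j\,\Sigma^{ij}(\ell)$, the product rule gives $\Div_\ell V = \sum_i\Sigma^{ii} + \sum_j\ell_j\sum_i\partial_{\ell_i}\Sigma^{ij} = \sum_i\Sigma^{ii}$, so that the divergence theorem on $B_r(0)$, together with $\ell_j = r\widehat{\ell}_j$ on $\partial B_r(0)$, yields
\[
\int_{B_r(0)} \sum_i\Sigma^{ii}\,d\ell = \int_{\partial B_r(0)} V\cdot\widehat{\ell}\,dS = \int_{\partial B_r(0)} \sum_{ij}\ell_j\,\Sigma^{ij}\,\widehat{\ell}_i\,dS = r\int_{\partial B_r(0)} \sum_{ij}\widehat{\ell}_i\widehat{\ell}_j\,\Sigma^{ij}\,dS.
\]
Dividing by $|B_r(0)|$ and using the geometric fact $r\,|\partial B_r(0)|/|B_r(0)| = d = 3$ converts this into the normalized identity above, which is precisely \eqref{eq:WA}.

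The only real obstacle is a regularity/justification issue: the measures $\mu_t$ are supported merely on $H^1$-in-space divergence-free fields, so the differentiation of $\Sigma^{ij}$ in $\ell$, the integration by parts establishing divergence-freeness, and the divergence theorem all require justification in a weak sense. I would dispatch this using the energy bound $\int_0^T\!\int_{\Hn}|u|_{H^1(\dom)}^2\,d\mu_t\,dt<\infty$ coming from \eqref{eq:energystatstrong} (cf.\ Remark~\ref{rem:integrability}): by Cauchy--Schwarz and continuity of translation in $L^2$, this makes $\Sigma^{ij}$ a $C^1$ (indeed Lipschitz) function of $\ell$, with $\partial_{\ell_k}\Sigma^{ij}$ bounded, so the divergence theorem applies classically. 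If one prefers, the whole computation can first be carried out for spatial mollifications $u_\delta$ (still divergence free), where every step is elementary, and then passed to the limit $\delta\to0$ using the same uniform $H^1$ bound. Finally, restricting to a coordinate ball $B_r(0)\subset\torus^d$ with $r<\tfrac12$ keeps the geometry Euclidean, so no toroidal boundary corrections arise.
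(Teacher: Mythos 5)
Your proof is correct, and it reaches \eqref{eq:WA} by a genuinely more direct route than the paper. The paper's proof starts from the $k=1$ moment equation \eqref{eq:FKsyspressure} tested against a mollified shift $\ell\,\omega_\eta(x-y)\theta(t)$, subtracts the resulting identity at $x$ and at $x+\ell$, integrates over $x$, and then checks term by term that every contribution coming from the time derivative, the viscous term, and the pressure vanishes by periodicity and divergence-freeness; only the convective remainder $E_2$ survives, and its decomposition plus the divergence theorem in $\ell$ yields the identity after sending $\eta\to0$. You instead observe that the statement is purely kinematic: it only uses that $\mu_t$-a.e.\ $u$ is divergence free, translation invariance of the $x$-integral on the torus, and enough regularity to differentiate in the separation variable. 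Your key lemma --- that the structure-function tensor $\Sigma^{ij}(\ell)=2R^{ij}(0)-R^{ij}(\ell)-R^{ij}(-\ell)$ is solenoidal in $\ell$ because $\sum_i\partial_{\ell_i}R^{ij}(\ell)=\int(\Div u)(x+\ell)\,u^j(x)=0$ --- replaces the paper's cancellation of $E_1,E_3,\dots,E_7$ and of $E_{2,1}$, and the single application of the divergence theorem to $V_i=\sum_j\ell_j\Sigma^{ij}$ reproduces exactly the paper's treatment of $E_{2,2}+E_{2,3}$, with the factor $3$ arising in both cases from $r\,|\partial B_r(0)|/|B_r(0)|=d$. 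Your regularity justification is also adequate: the $L^2_tH^1_x$ bound \eqref{eq:h1linf} from the energy inequality, combined with Cauchy--Schwarz and continuity of translations in $L^2$, makes $\Sigma^{ij}$ a $C^1$ function of $\ell$, so the divergence theorem applies classically and no mollification limit is needed (though your fallback of mollifying first, as the paper does, works equally well). What your approach buys is economy and transparency --- it makes clear that the weak anisotropy relation is a property of statistically homogeneous solenoidal fields rather than of the Navier--Stokes dynamics; what the paper's approach buys is that it stays entirely inside the weak (moment-equation) formulation, so it never needs to invoke translation invariance of the law beyond what the $x$-integration already provides.
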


Under Assumption \ref{ass:scaling}, we obtain
\begin{equation}
\int_0^T\int_{\dom}\int_{\hdiv}\fint_{B_r(0)} |\delta_\ell u(x)|^2 d\ell \,dx \,d\mu_t(u)\,dt\leq C r^\alpha.
\end{equation}
Using the equivalence theorem~\ref{thm:equivalencemeasures}, we can write this as
\begin{equation}\label{eq:DCvis}
\int_0^T\int_{\dom}\fint_{B_r(0)} \int_{U^2}|\xi_1-\xi_2|^2 \,d\nu_{x,x+y}^2(\xi) \,dy \,dx \,dt\leq C r^\alpha.
\end{equation}
and since this is uniform with respect to the viscosity coefficient $\vis$ (by the weak scaling assumption), it implies uniform diagonal continuity of the sequence $\{\cm\}_{\vis>0}$.
\subsection{Passage to the limit $\vis\to 0$}
Now we are in a position to prove our main result. We will keep track of the superscript $\vis$ again in order to distinguish between the approximating sequence $\{\cm^\vis\}_{\vis>0}$ and the limiting measure $\cm$ for $\vis=0$.
\begin{theorem}
	\label{thm:conv}
	Let $\{\cm^\vis\}_{\vis>0}$ be a sequence of (either Foia\c{s}--Temam or Friedman--Keller) statistical solutions to the Navier--Stokes equations with initial data $\mu_0$ with bounded support (cf.~\eqref{eq:initboundedsupp}). Assume that $\cm^\vis$ all satisfy Assumption~\ref{ass:scaling}. Then, as $\vis\to 0$, $\cm^\vis$ converges (along a subsequence) to a correlation measure $\cm$ on $L^2$ with bounded support (cf.~\eqref{eq:bdmoments})
	\begin{equation}
\int_{\dom^k}\int_{U^{k}} |\xi_1|^2\dots|\xi_k|^2 \,d\nu_{t,x}^k(\xi) \,dx\leq R^k<\infty,
\end{equation}
for some $0<R<\infty$, any $k\in \N$ and that satisfies the ``inviscid Friedman--Keller system'':
		\begin{multline}
	\label{eq:FKsysinviscid}
	\int_0^T\!\!\int_{\dom^k}\!\!\int_{U^k}\!\!(\xi_1\otimes \cdots\otimes \xi_k):\frac{\partial \varphi}{\partial t}(t,x)\,d\nu_{t,x}^k(\xi) \,dx \,dt+\int_{\dom^k}\!\!\int_{U^k}\!\!(\xi_1\otimes \cdots\otimes \xi_k):\varphi(0,x)\,d\nu_{0,x}^k(\xi) \,dx \\
	+\sum_{i=1}^k\int_0^T\!\!\int_{\dom^k}\!\!\int_{U^k}(\xi_1\otimes\cdots\otimes(\xi_i\otimes\xi_i)\otimes\cdots\xi_k):\Grad_{x_i}\varphi(t,x)\,d\nu_{t,x}^k(\xi) \,dx \,dt
	=0
	\end{multline}
	for all $k\in\N$, for all $\varphi\in C^2_c([0,T]\times \dom^k;U^k)$ with $\Div_{x_i}\varphi(x)=0$, a.e. $x\in \dom^k$ for all $i=1,\dots, k$
	and (corresponding to the divergence constraint)
	\begin{equation}
	\label{eq:divfreeconstlimit}
	\int_{\dom^{k}}\int_{U^{k}} \xi_1\otimes\dots\otimes\xi_{\ell}\otimes\alpha_{\ell+1}(\xi_{\ell+1})\otimes\dots \otimes \alpha_k(\xi_k) \,d\nu_{t,x}^k(\xi)\cdot\Grad_{x_1,\dots, x_\ell}\psi(x) \,dx = 0,
	\end{equation}
	where $\Grad_{x_1,\dots,x_\ell}=(\Grad_{x_{1}},\dots,\Grad_{x_\ell})^\top$, $1\leq \ell\leq k\in\N$, for all $\psi\in H^1(\dom^k;U^{k-\ell})$, $\alpha_j\in C(U;U)$, $\alpha_j(v)\leq C(1+|v|^2)$ and $j=1,\dots, k$.
\end{theorem}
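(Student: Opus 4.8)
The plan is to verify the hypotheses of the compactness Theorem~\ref{thm:timedepcmcompactness} uniformly in $\vis$, extract a limiting correlation measure, and then pass to the limit term-by-term in the Friedman--Keller system~\eqref{eq:FKsys}. First I would establish the uniform bounds. Since $\mathrm{supp}(\mu_0)\subset B\subset\{u:\|u\|_{L^2(\dom)}\le R\}$ is the \emph{same} initial datum for every $\vis$, the strengthened energy inequality~\eqref{eq:energystatstrong} (equivalently~\eqref{eq:energycm}) propagates this support bound: choosing $\psi\in C^1(\R,\R)$ nonnegative and nondecreasing with $\psi\equiv 0$ on $[0,R^2]$ and $\psi>0$ on $(R^2,\infty)$ (with bounded derivative) forces $\int_{\Hn}\psi(\|u\|_{L^2}^2)\,d\mu^\vis_t(u)=0$, so $\mu^\vis_t$ is supported in $\{\|u\|_{L^2}\le R\}$ for a.e.\ $t$, uniformly in $\vis$. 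This furnishes the uniform $L^2$-bound~\eqref{eq:timedepuniformlpbound} and the uniform bounded-support hypothesis~\eqref{eq:tLpLinftybound}. The uniform diagonal-continuity hypothesis~\eqref{eq:timedepuniformdc} is precisely the estimate~\eqref{eq:DCvis}, obtained by chaining the $\vis$-independent bound $|S^3_\|(\tau,r)/r|\le 2E_0$ of Lemma~\ref{lem:boundS3parallel} with the weak-scaling Assumption~\ref{ass:scaling} (giving $|S^2_\|(\tau,r)|\le Cr^\alpha$), the weak-anisotropy Lemma~\ref{lem:Drivas}, and the equivalence Theorem~\ref{thm:equivalencemeasures}. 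Theorem~\ref{thm:timedepcmcompactness} then yields a subsequence $\cm^{\vis_j}\weaksto\cm\in\Corrmeas^2([0,T),\dom;U)$, and the bounded support~\eqref{eq:bdmoments} of the limit follows from the lower-semicontinuity property~\ref{prop:tdliminf} applied with $\kappa(\xi)=|\xi_1|^2\cdots|\xi_k|^2$ and $\phi(t,x)=\chi(t)$ for arbitrary $0\le\chi\in L^1(0,T)$.

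The heart of the argument is passing to the limit in~\eqref{eq:FKsys}. The obstruction is that weak-$*$ convergence (property~\ref{prop:tdwsconv}) only tests against $g\in\Caratheodory^k_0$, i.e.\ integrands decaying at infinity in $\xi$, whereas every nontrivial term of~\eqref{eq:FKsys} is a polynomial moment. The resolution is the strong convergence of observables, property~\ref{prop:strongconv}, available precisely because $\dom=\torus^d$ is compact, $T<\infty$, and the uniform bounded support~\eqref{eq:tLpLinftybound} was secured above. Concretely, I would verify that the integrand of the time-derivative term, $(\xi_1\otimes\cdots\otimes\xi_k):\partial_t\varphi(t,x)$ (degree $k$), and of the convective term, $\sum_i(\xi_1\otimes\cdots\otimes(\xi_i\otimes\xi_i)\otimes\cdots\otimes\xi_k):\Grad_{x_i}\varphi$ (degree $k+1$), both lie in $\Caratheodory^{k,2}_1([0,T],\dom;U)$: the growth bound~\eqref{eq:gboundedt} holds because each monomial is dominated for large $|\xi|$ by the top term $|\xi_1|^2\cdots|\xi_k|^2$ (of degree $2k\ge k+1$), while the local Lipschitz bound~\eqref{eq:glipschitzt} holds since these are smooth polynomials, with the $\widetilde h$ term absorbing the contributions of variables appearing only linearly. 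Property~\ref{prop:strongconv} then passes both terms to the limit. The viscous term carries the prefactor $\vis$ multiplying a quantity bounded uniformly in $\vis$ (by the uniform moment bounds and smoothness of $\varphi$), hence vanishes as $\vis_j\to0$; and the initial term is $\vis$-independent, since $\mu^\vis_0\equiv\mu_0$, so it is retained unchanged. Collecting these limits gives the inviscid system~\eqref{eq:FKsysinviscid}.

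For the divergence constraint~\eqref{eq:divfreeconstlimit} I would test~\eqref{eq:divfreeconst} against an arbitrary $0\le\chi\in C_c((0,T))$ in time and apply property~\ref{prop:strongconv} to the observable $\chi(t)\,\bigl[\xi_1\otimes\cdots\otimes\xi_\ell\otimes\alpha_{\ell+1}(\xi_{\ell+1})\otimes\cdots\otimes\alpha_k(\xi_k)\bigr]\cdot\Grad_{x_1,\dots,x_\ell}\psi(x)$, first for $\alpha_j$ smooth and Lipschitz with quadratic growth, so that this observable belongs to $\Caratheodory^{k,2}_1$ (its degree $2k-\ell\le 2k$); letting $\chi$ range over a dense set recovers~\eqref{eq:divfreeconstlimit} for a.e.\ $t$ and such $\alpha_j$. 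The general case of merely continuous $\alpha_j$ with $|\alpha_j(v)|\le C(1+|v|^2)$ then follows by approximating each $\alpha_j$ uniformly on compact sets and invoking the uniform bound on \emph{all} moments (a consequence of the bounded support) to ensure uniform integrability, so that the equality survives the approximation.

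I expect the main obstacle to be exactly this gap between weak-$*$ and strong convergence: every informative term of the Friedman--Keller hierarchy involves a polynomially growing integrand, so the whole limit passage rests on the strong convergence of observables---which is why establishing the uniform bounded support in the first step and verifying $\Caratheodory^{k,2}_1$-membership of the monomials are indispensable. Within this, the divergence constraint for general continuous $\alpha_j$ is the most delicate point, since those functions are not Lipschitz and the approximation must be controlled purely through the uniform moment bounds.
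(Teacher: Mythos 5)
Your proposal is correct and follows essentially the same route as the paper's proof: verify the uniform $L^2$-bound and uniform diagonal continuity (via the K\'arm\'an--Howarth--Monin estimates, the weak scaling assumption and weak anisotropy), invoke the compactness theorem, and pass to the limit term-by-term by checking that the Friedman--Keller integrands and the divergence-constraint observable belong to $\Caratheodory^{k,2}_1$ so that the strong convergence of observables applies, with the viscous term vanishing as $O(\vis)$. Your added details---the explicit cut-off argument propagating the support bound, the lower-semicontinuity argument for the moment bounds of the limit, and the approximation of merely continuous $\alpha_j$---are refinements of steps the paper treats more briefly, not departures from its method.
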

\begin{proof}
	From the condition on the initially bounded support~\eqref{eq:initboundedsupp} and the energy inequality~\eqref{eq:energycm}, we obtain that the sequence $\cm^\vis$ satisfies~\eqref{eq:timedepuniformlpbound} for $p=2$ uniformly in $\vis>0$. The reasoning of Subsection~\ref{sec:KHM},~\ref{sec:ODEs} and~\ref{sec:scaling} resulting in~\eqref{eq:DCvis} imply that $\cm^\vis$ is uniformly diagonal continuous as in~\eqref{eq:timedepuniformdc}. Hence, using Theorem~\ref{thm:timedepcmcompactness}, we obtain, up to subsequence, the existence of a limiting correlation measure $\cm\in \Corrmeas^{2}([0,T),D;U)$. So it remains to check whether $\cm$ satisfies the equations~\eqref{eq:FKsysinviscid} and~\eqref{eq:divfreeconstlimit}. We note that the functions
	\begin{equation}
	\begin{split}
	g_1(t,x,\xi)&\coloneqq(\xi_1\otimes \cdots\otimes \xi_k):\frac{\partial \varphi}{\partial t}(t,x),\\
	g_2(t,x,\xi)&\coloneqq \sum_{i=1}^k (\xi_1\otimes \cdots\otimes \xi_k):\Delta_{x_i}\varphi(t,x),\\
g_3(t,x,\xi)&\coloneqq \sum_{i=1}^k (\xi_1\otimes\cdots\otimes(\xi_i\otimes\xi_i)\otimes\cdots\xi_k):\Grad_{x_i}\varphi(t,x)
	\end{split}
	\end{equation}
	for $\varphi\in C^2_c([0,T]\times \dom^k;U^k)$, $1\leq \ell\leq k\in \N$, are all functions in $\Caratheodory^{k,p}_1([0,T],D;U)$.
Hence, we can pass to the limit in all the terms in the Friedman--Keller system~\eqref{eq:FKsys}. The term that is multiplied by $\vis$ vanishes because it is a uniformly bounded in $\vis>0$ quantity that is multiplied by $\vis$. For the divergence constraint \eqref{eq:divfreeconstlimit}, we note that the function
\[
g_4(t,x,\xi)\coloneqq \theta(t)\xi_1\otimes\dots\otimes\xi_{\ell}\otimes\alpha_{\ell+1}(\xi_{\ell+1})\otimes\dots \otimes \alpha_k(\xi_k)\cdot\Grad_{x_1,\dots, x_\ell}\psi(x),
\]
lies in $\Caratheodory^{k,p}_1([0,T],D;U)$ for any $\theta\in C^\infty_c((0,T))$, and $\psi\in H^1(\dom^k;U^{k-\ell})$, and $\alpha_j\in C(U;U)$ with $|\alpha_j(v)|\leq C(1+|v|^2)$. Passing $\eps\to0$ in $\ip{\nu^{\eps,k}}{g_4}$ and using that $\nu^{\eps,k}$ satisfy the divergence constraint~\eqref{eq:divfreeconst},  we can conclude, by the arbitrariness of $\theta$, that~\eqref{eq:divfreeconstlimit} holds for a.e.\ $t\in [0,T]$.
\end{proof}
\begin{remark}
	By the equivalence theorem~\ref{thm:equivalencemeasures}, we know that the limiting correlation measure $\cm$ corresponds to a parametrized measure $\mu=(\mu_t)_{t>0}:[0,T)\to \mathcal{P}(\hdiv)$ that satisfies
	%\todof{divergence constraint?}
	\begin{equation}
	\label{eq:statsolFTEuler}
	\int_{L^2(\dom)} \Phi(u) \,d\mu_t(u)=\int_{L^2(\dom)}\Phi(u) \,d\mu_0(u)+\int_0^t\int_{L^2(\dom)}\int_{\dom} (u(x)\otimes u(x)):\Grad_x \Phi'(u)(x) \,dx \,d\mu_s(u) \,ds
	 %+ \int_0^t\int (F(s,u),\Phi'(u)) \,d\mu_s(u) \,ds,
	\end{equation}
	for all cylindrical test functions $\Phi\in \cyl^0$ that satisfy $g_j\in C^2(\dom)$,
and the energy inequality
	\begin{equation}
	\label{eq:energystatEuler}
	\int_{L^2(\dom)} \|u\|_{L^2(\dom)}^2 \,d\mu_t(u)
	\leq %2\int_0^t\int_{L^2(\dom)} (f(s),u)\,d\mu_s(u) \,ds +
	\int_{L^2(\dom)} \|u\|_{L^2(\dom)}^2 \,d\mu_0(u),\quad \text{for all } t\in [0,T].
	\end{equation}
	The proof of this fact follows along the lines of the proof of Theorem~\ref{thm:FKsys2FT_s} while ignoring the terms involving $\vis$ and not attempting to recover $B(u)$ as it may be unbounded.

\end{remark}

\section{Discussion}
It is well-known that many incompressible fluid flows of interest are characterized by very-high Reynolds number. Hence, a precise characterization of the vanishing viscosity ($\vis \to 0$) limit of the Navier-Stokes equations \eqref{eq:ns} is of great interest. Formally, one would expect that the vanishing viscosity limit of Navier-Stokes equations is related to the incompressible Euler equations. However as mentioned in the introduction, rigorous results in this direction are only available in two space dimensions, even in the case of periodic boundary conditions. The key aim of this article was to investigate the vanishing viscosity limit of the Navier-Stokes equations, including in three space dimensions. 

It is well known that fluid flows at high Reynolds numbers are characterized by \emph{turbulence}, loosely speaking, marked by the presence of energy containing eddies at smaller and smaller scales. This phenomenon is clearly linked to the lack of compactness in the Leray-Hopf Navier-Stokes solutions as well as their possible instabilities/non-uniqueness. 

Hence, one needs to make further assumptions on the Leray-Hopf solutions that can yield additional information and facilitate passage to the limit. One avenue for making such assumptions, which are realistic and possibly observed in experiments, comes from physical theories of turbulence. In particular, Kolmogorov's well-known K41 theory is based on several verifiable assumptions on the underlying fluid flow and results in a precise characterization of quantities such as structure functions and \emph{energy spectra}. 

In \cite{glimm1-2}, Chen and Glimm relate the K41 energy spectra to compactness results on the Leray-Hopf solutions, in appropriate Sobolev and H\"older spaces. Consequently, under the assumption of the K41 energy spectrum, the authors prove that the underlying Leray-Hopf solutions converge to \emph{weak solutions} of the incompressible Euler equations as $\vis \to 0$. However, Kolmogorov's derivation of the decay of energy spectra is based on a \emph{probabilistic charectization} of the underlying fluid flow. In particular, assumptions such as (statistical) homogeneity, isotropy and scaling, which form the foundation of Kolmogorov's theory, are too stringent if imposed at the deterministic level, as done in \cite{glimm1-2}. Moreover, it is now well-established that the strong scaling assumptions of Kolmogorov might not hold in real fluid flows and \emph{intermittent} corrections are necessary. Hence, the applicability of the assumptions and results of \cite{glimm1-2} can be questioned from this perspective.   

Nevertheless, the connection with Kolmogorov's theories of turbulence and their variants forms the basis of our work. We start with the realization that a probabilistic description of the solutions of Navier-Stokes equations is necessary to relate physical theories of turbulence to rigorous mathematical statements. To this end, we focus on \emph{statistical solutions} of Navier-Stokes equations. Two possible frameworks of such statistical solutions are available, namely the \emph{Foia\c{s}-Prodi} statistical solutions (see Definition \ref{def:foiastemamstatsol}) and the \emph{Friedman-Keller} statistical solutions (see Definition \ref{def:fksys}), which is based on the concept of correlation measures of \cite{FLM17}. We prove that both these solution concepts are equivalent as long as a statistical version of the energy inequality holds. This also allows us to prove the existence of Friedmann-Keller statistical solutions of the incompressible Navier-Stokes equations. 

Then, we investigated the vanishing viscosity limit of the statistical solutions of the incompressible Navier-Stokes equations. To this end, we derived a suitable statistical version of the well-known K\'arm\'an-Howarth-Monin relation and used it to prove precise rates for the asymptotic decay of a averaged third-order structure function in Lemma \ref{lem:S30}. However, these estimates do not suffice to pass to the $\vis \to 0$ limit. To this end, we assumed a \emph{weak statistical scaling} of the Navier-Stokes statistical solutions (see Assumption \ref{ass:scaling}). This assumption is a weaker version of Kolmogorov's scaling assumptions in his K41 theory. Moreover, it is consistent with and incorporates different variants of scaling that are proposed in the physics literature to explain intermittent corrections to Kolmogorov's theory. Under this assumption, we proved a weak anisotropy result and invoked compactness results of \cite{systemspaper} to rigorously prove that the statistical solutions of the Navier-Stokes equations converge, in a suitable sense, to a statistical solution of the incompressible Euler equations. Thus, we were able to characterize the vanishing viscosity limit of the Navier-Stokes equations in a relevant regime.  

At this juncture, it is essential to point that that no assumption, other than weak statistical scaling, is made in our results and all other estimates are derived rigorously. This should be contrasted with the results of \cite{glimm1-2} where the authors directly assume a decay of the energy spectrum for the weak solutions of the Navier-Stokes equations. It is currently unclear if one can relax the weak statistical scaling assumption or even if it holds for all incompressible fluid flows. Experimental evidence strongly supports that this assumption is verified in practice, see e.g.,~\cite{Anselmet1984,SheLeveque1994,Saw2018}. 

To the best of our knowledge, the only rigorous study of the vanishing viscosity limit of the (Foia\c{s}-Prodi) statistical solutions was carried out by Chae in \cite{Chae1991} where he proved that these statistical solutions converge to a measure-valued solution of the incompressible Euler equations. In contrast, we prove convergence to statistical solutions of the incompressible Euler equations and recall that statistical solutions are much more informative than measure-valued solutions as they also incorporate knowledge of all multi-point correlations. 

Finally, our characterization of the vanishing viscosity limit can be viewed in connection to recent results in \cite{MishraLanthalerPares2020} where the authors proved convergence of numerical spectral viscosity approximations to the statistical solutions of the Euler equations under very similar weak scaling assumptions.

\appendix

\section{Equivalence of different definitions of statistical solutions for the incompressible Navier--Stokes equations}\label{app:equivalence}
This appendix is devoted to the proof of Theorems~\ref{thm:FT2FKsys_s} and~\ref{thm:FKsys2FT_s}. For convenience, we restate the result:
\begin{theorem}[Foia\c{s}--Prodi statistical solutions are Friedman--Keller solutions]\label{thm:FT2FKsys}
		Let $\mu$ be a Foia\c{s}--Prodi statistical solution such that the initial condition $\mu_0$ has bounded support, $\text{supp}(\mu_0)\subset B\subset \hdiv$, $B=\{u\in \Lvec\, :\, \|u\|_{L^2(\dom)}\leq R\}$ for some $0<R\in\R$ large enough. Then $\mu$ is a Friedmann--Keller statistical solution (cf.~Definition~\ref{def:fksys}).
\end{theorem}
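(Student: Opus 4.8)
The plan is to use the correspondence of Theorem~\ref{thm:equivalencemeasures} to transfer the Foia\c{s}--Prodi weak formulation~\eqref{eq:statsolFT}, tested against carefully chosen cylindrical functionals, into the moment hierarchy~\eqref{eq:FKsys}, and to recognize the energy inequality~\eqref{eq:energycm} as a dual reformulation of~\eqref{eq:energystatstrong}. First I would record that, since $\supp(\mu_0)$ lies in the ball $\{\|u\|_{L^2(\dom)}\leq R\}$ and since~\eqref{eq:energystatstrong} (applied with nonnegative nondecreasing $\psi$) forces $\|u\|_{L^2(\dom)}\leq R$ for $\mu_t$-a.e.\ $u$ and a.e.\ $t$, every $\mu_t$ has support in the same fixed ball. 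This is what legitimizes polynomial test functionals: for divergence-free $g_1,\dots,g_k\in\V\cap C^\infty$ the monomial $\Phi(u)=\prod_{i=1}^k(u,g_i)$ agrees on $\supp(\mu_t)$ with a genuine element of $\cyl^0$ obtained by composing with a cutoff $\phi\in C^1_c(\R^k)$ equal to $1$ on the relevant cube.

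Second, I would insert such a $\Phi$ into~\eqref{eq:statsolFT}. Computing $\Phi'(u)=\sum_{i=1}^k\bigl(\prod_{j\neq i}(u,g_j)\bigr)g_i$ and using $F$ from~\eqref{eq:F}, the viscous contribution $-\vis(Au,g_i)=\vis(u,\Delta g_i)$ and the convective contribution $-(B(u),g_i)=\int_\dom u\otimes u:\Grad g_i\,dx$ appear after integration by parts on the torus (using $\Div u=0$ and $g_i\in\V$). Via the duality~\eqref{eq:corrmeastimeduality} between $\mu_t$ and $\cm$, each product $\prod_{j\neq i}(u,g_j)\cdot(u,\Delta g_i)$ becomes the $k$-point moment of $\nu^k$ paired with $\Delta_{x_i}(g_1\otimes\cdots\otimes g_k)$, while $\prod_{j\neq i}(u,g_j)\cdot\int_\dom u\otimes u:\Grad g_i$ becomes the pairing of $\nu^k$ with $\xi_1\otimes\cdots\otimes(\xi_i\otimes\xi_i)\otimes\cdots\otimes\xi_k$ against $\Grad_{x_i}(g_1\otimes\cdots\otimes g_k)$, both copies of $\xi_i$ sitting at the same point $x_i$. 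To produce the time-derivative and initial terms of~\eqref{eq:FKsys}, I would multiply the identity by $\theta'(t)$ for $\theta\in C^1_c([0,T))$ and integrate by parts in $t$; this realizes the separated test function $\varphi(t,x)=\theta(t)\,g_1(x_1)\otimes\cdots\otimes g_k(x_k)$, and the signs rearrange precisely into~\eqref{eq:FKsys}.

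Third, I would remove the product structure by density. Trigonometric polynomials on $\dom^k$ that are divergence-free in each $x_i$ decompose into Fourier modes $e^{\mathrm{i}\sum_i\kappa_i\cdot x_i}(a_1\otimes\cdots\otimes a_k)$ with $a_i\cdot\kappa_i=0$, i.e.\ into simple tensors of divergence-free fields, and these are dense in $C^2$ in $\{\varphi\in C^2_c([0,T)\times\dom^k;U^k):\Div_{x_i}\varphi=0\}$. Since the moments are uniformly bounded on the fixed ball, linearity and this density upgrade~\eqref{eq:FKsys} from separated to arbitrary admissible $\varphi$. The divergence constraint~\eqref{eq:divfreeconst} follows directly from the duality: for $\mu_t$-a.e.\ $u$ the field is divergence-free, so integrating by parts in $x_1,\dots,x_\ell$ annihilates the pairing, the quadratically bounded $\alpha_j$ being admissible thanks to the uniform $L^2$-support bound. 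Finally, choosing $\psi=p_K$ in~\eqref{eq:energystatstrong} (modified outside $[0,R]$ to have bounded derivative, which does not affect integrals over $\supp(\mu_t)$) and rewriting $\int\|u\|_{L^2(\dom)}^{2k}\,d\mu_t=\int_{\dom^k}\ip{\nu^k_{t,x}}{|\xi_1|^2\cdots|\xi_k|^2}\,dx$, while representing $|u|_{H^1(\dom)}^2$ as the $h\to0$ limit of $h^{-2}|u(x_i)-u(x_i+h\be_j)|^2$ summed over $i,j$, converts~\eqref{eq:energystatstrong} into~\eqref{eq:energycm}.

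I expect the main obstacle to be the passage to general test functions together with the limit interchanges. Concretely, controlling the convective term --- which is quadratic in $u$ and hence bounded only through the $L^2$-support bound rather than any compactness --- while approximating a divergence-free $\varphi$ by divergence-free simple tensors in a topology strong enough to converge $\Grad_{x_i}\varphi$, is the delicate point; and justifying the exchange of the limit $h\to0$ with the time integral and the average against $\mu_s$ in the dissipation term of~\eqref{eq:energycm}, so that the finite differences genuinely reconstruct $2\vis\int_0^t\int\psi'(\|u\|_{L^2(\dom)}^2)|u|_{H^1(\dom)}^2\,d\mu_s\,ds$, relies on the $L^1([0,T];\V)$-integrability recorded in Remark~\ref{rem:integrability}. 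Everything else is bookkeeping once the bounded-support reduction is in place.
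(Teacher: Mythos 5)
Your proposal is correct and follows essentially the same route as the paper's proof: monomial cylindrical test functionals legitimized by the uniform bounded support, transfer to the correlation measure via the duality of Theorem~\ref{thm:equivalencemeasures}, density of simple divergence-free tensors to pass to general test functions, and polynomial choices of $\psi$ in the strengthened energy inequality combined with a finite-difference representation of the $H^1$ seminorm to obtain~\eqref{eq:energycm}. The only notable difference is that where you derive the propagation of the bounded support and the space-time weak formulation directly from Definition~\ref{def:foiastemamstatsol} (via a cutoff $\psi$ vanishing on $[0,R^2]$ and via multiplication by $\theta'$ plus Fubini, respectively), the paper instead cites \cite{Foias1972} for both facts; your self-contained substitutes are valid.
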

\begin{proof}
	It is shown in~\cite[Theorem 2, Section 3]{Foias1972} that Foia\c{s}--Prodi statistical solutions with initial measure $\mu_0$ having bounded support in $B_{\hdiv}(R)\coloneqq\{u\in\hdiv : \|u\|_{L^2(\dom)}\leq R\}$ have bounded support for all times, i.e., $\text{supp}(\mu_t)\subset B_{\hdiv}(R)$.
	Therefore, we can assume that $\mu_t$ has uniformly bounded support. This implies in particular that $\mu_t$ have bounded moments:
	\begin{equation}\label{eq:bdmoments2}
	\int_{\hdiv}\|u\|_{L^2(\dom)}^{2k} \,d\mu_t(u)\leq R^{2k}\quad \text{for a.e. } t\in[0,T].
	\end{equation}
	Moreover, by~\cite[Lemma 5, Section 3]{Foias1972}, we have that statistical solutions of Navier--Stokes satisfy
	\begin{multline}\label{eq:3.13i}
	\int_0^T\int_{\hdiv}\left[-\partial_t\Phi(t,u) + a(u,\partial_u\Phi(t,u))+ b(u,u,\partial_u\Phi(t,u))\right]\,d\mu_t(u)\,dt\\
	 =\int_{\hdiv}\Phi(0,u)\,d\mu_0(u)%+\int_0^T\int_{\hdiv} (f(t),\partial_u\Phi(t,u)) \,d\mu_t(u)\,dt,
	\end{multline}
	for any test function $\Phi(t,u)$ that is Fr\'{e}chet differentiable on $[0,T]\times \hdiv$ with $\Phi(t,\cdot)=0$ near $t=T$ and $|\partial_u\Phi(t,u)|\leq C$ and $|\partial_t\Phi(t,u)|\leq C_1+C_2\|u\|_{L^2(D)}$ for all $u$ and $t$ and some constants $C, C_1, C_2$ (equation ($3.13_{\mathrm{I}}$) and condition (3.8) in \cite{Foias1972}). Therefore, we can choose test functions
	\begin{equation*}
	\Phi(u)=q((u,\varphi_1),\dots,(u,\varphi_k))\theta(t)
	\end{equation*}
	for $q$ a polynomial on $\R^k$ and $\varphi_j\in \Vd\cap C^2_c(D,U)$ and $\theta\in C^1_c([0,T))$, $j=1,\dots, k$ in \eqref{eq:3.13i}, so that we get
	\begin{multline*}
	\int_0^T\!\!\!\!\int_{\hdiv}\!\!-\theta'(t)q((u,\varphi_1),\dots,(u,\varphi_k))d\mu_t(u)dt\\
	+ \int_0^T\!\!\!\!\int_{\hdiv}\!\!\Big[\theta(t)\sum_{i=1}^k\partial_i q((u,\varphi_1),\dots,(u,\varphi_k))\left[a(u,\varphi_i)+b(u,u,\varphi_i)\right]\Big]\,d\mu_t(u)\,dt\\
	=\int_{\hdiv}\!\!\!\!\!\theta(0)q((u,\varphi_1),\dots,(u,\varphi_k)) \,d\mu_0(u), 
		\end{multline*}
	Note that we can integrate by parts in the terms involving $a(u,\varphi_i)$ and $b(u,u,\varphi_i)$, $i=1,\dots, k$, so that all the derivatives are on the test functions $\varphi_i$, $i=1,\dots, k$ and $\theta$:
	\begin{align*}
	0=& \int_0^T\!\!\int_{\hdiv}\!\!\theta'(t)q((u,\varphi_1),\dots,(u,\varphi_k))\,d\mu_t(u)\,dt+\int_{\hdiv}\!\!\theta(0)q((u,\varphi_1),\dots,(u,\varphi_k)) \,d\mu_0(u)\\
	&\quad+ \sum_{i=1}^k\int_0^T\!\!\int_{\hdiv}\!\!\theta(t)\partial_i q((u,\varphi_1),\dots,(u,\varphi_k))\\
	&\hphantom{\quad +\sum_{i=1}^k\int_0^T\!\!\int_{\hdiv}\!\!}\times\int_{\dom}\left[\vis u(x_i)\Delta_{x_i}\varphi_i(x_i)+(u(x_i)\otimes u(x_i)):\Grad_{x_i}\varphi_i(x_i)\right] \,dx_i\,d\mu_t(u)\,dt
	\end{align*}
	Now take $q(s_1,\dots,s_k)=s_1\cdots s_k$, so that the last identity becomes (denote $\,dx=\,dx_1\dots \,dx_k$)
	\begin{multline*}
	0=\int_0^T\!\!\int_{\hdiv}\int_{\dom^k}\theta'(t)u(x_1)\cdot\varphi_1(x_1) \cdots u(x_k)\cdot\varphi_k(x_k) \,dx \,d\mu_t(u)\,dt\\
	+\sum_{i=1}^k\int_0^T\!\!\int_{\hdiv}\int_{\dom^k}\theta(t) \big(u(x_1)\cdot\varphi_1(x_1) \cdots \vis u(x_i)\cdot\Delta_{x_i}\varphi_i(x_i) \cdots u(x_k)\cdot\varphi_k(x_k)\big)   \,dx \,d\mu_t(u)\,dt\\
	+ \sum_{i=1}^k\int_0^T\!\!\int_{\hdiv}\int_{\dom^k}\theta(t) \big(u(x_1)\cdot\varphi_1(x_1)\cdots(u(x_i)\otimes u(x_i)):\Grad_{x_i}\varphi_i(x_i)\cdots u(x_k)\cdot\varphi_k(x_k)\big)   \,dx \,d\mu_t(u)\,dt\\
	+\int_{\hdiv}\int_{\dom^k}\theta(0)u(x_1)\cdot\varphi_1(x_1) \cdots u(x_k)\cdot\varphi_k(x_k) \,dx \,d\mu_0(u), %\\
	\end{multline*}
	which is, denoting $x=(x_1,\dots,x_k)$ and $\varphi(t,x)\coloneqq\theta(t) \varphi_1(x_1)\otimes\dots\otimes\varphi_k(x_k)$, equivalent to
	\begin{align*}
	0=&\ \int_0^T\!\!\int_{\hdiv}\int_{\dom^k}(u(x_1)\otimes\dots\otimes u(x_k)):\partial_t\varphi(t,x) \,dx \,d\mu_t(u)\,dt\\
	&+ \vis\sum_{i=1}^k\int_0^T\!\!\int_{\hdiv}\int_{\dom^k}(u(x_1)\otimes\dots\otimes u(x_k)):\Delta_{x_i}\varphi(t,x)  \,dx \,d\mu_t(u)\,dt\\
	&+ \sum_{i=1}^k\int_0^T\!\!\int_{\hdiv}\int_{\dom^k} (u(x_1)\otimes\dots\otimes(u(x_i)\otimes u(x_i))\otimes\dots\otimes u(x_k)):\Grad_{x_i}\varphi(t,x) \,dx \,d\mu_t(u)\,dt\\
&+\int_{\hdiv}\int_{\dom^k}(u(x_1)\otimes\dots\otimes u(x_k)):\varphi(0,x) \,dx \,d\mu_0(u), %\\
	\end{align*}
	Since we assume that the support of $\mu_0$ is bounded (and therefore also the support of $\mu_t$ for almost all $t\in [0,T]$), we can use a density argument to conclude that the above identity holds for all $\varphi\in L^2([0,T];(\Vd)^k)\cap C^2_c([0,T]\times D^k;U^k)$, that is, all $\varphi\in C^2_c([0,T]\times D^k;U^k)$ with $\Div_{x_i}\varphi=0$ for all $i=1,\dots,k$. Observe next that all the terms in the above identity have the form required to apply Theorem~\ref{thm:equivalencemeasures} (recall that $\mu_t$ has bounded support). Therefore, by  the same theorem, there exists a unique correlation measure $\cm = (\cm_t)_{0\leq t\leq T}=\big((\nu^1_{t},\nu^2_t,\dots)\big)_{0\leq t\leq T}$ corresponding to $\{\mu_t\}$ satisfying
	 \begin{align*}
	 0=&\ \int_0^T\!\!\int_{\dom^k}\int_{U^k}(\xi_1\otimes\dots\otimes \xi_k):\partial_t\varphi(t,x)  \,d\nu_{t,x}^k(\xi)\,dx\,dt\\
	 &+ \vis\sum_{i=1}^k\int_0^T\!\!\int_{\dom^k}\int_{U^k}(\xi_1\otimes\dots\otimes \xi_k):\Delta_{x_i}\varphi(t,x)  \,d\nu_{t,x}^k(\xi)\,dx\,dt\\
	 &+ \sum_{i=1}^k\int_0^T\!\!\int_{\dom^k}\int_{U^k} (\xi_1\otimes\dots\otimes(\xi_i\otimes \xi_i)\otimes\dots\otimes \xi_k):\Grad_{x_i}\varphi(t,x) \,d\nu_{t,x}^k(\xi)\,dx \,dt\\
	 &+\int_{\dom^k}\int_{U^k}(\xi_1\otimes\dots\otimes \xi_k):\varphi(0,x) \,d\nu_{0,x}^k(\xi)\,dx, 
	 	 \end{align*}
	which is \eqref{eq:FKsys}.
	By previous arguments, each measure $\mu_t$ has bounded support (and in particular bounded moments), so we can take any nonnegative, nondecreasing polynomial $\psi(s)=p_K(s)$, of degree $K$, for $K\in \N$ in the strengthened energy inequality \eqref{eq:energystatstrong}, so that we get
		\begin{multline}
	\label{eq:energystatmoments}
\int_{L^2(\dom)}p_K( \|u\|_{L^2(\dom)}^{2}) \,d\mu_t(u) + 2\vis  \int_0^t\int_{L^2(\dom)}p_K'(\|u\|_{L^2(\dom)}^{2})|u|_{H^1(\dom)}^2 \,d\mu_s(u)\,ds\\
	\leq % 2k \int_0^t\int_{L^2(\dom)} \|u\|_{L^2(\dom)}^{2(k-1)}(f(s),u)\,d\mu_s(u) \,ds +
	 \int_{L^2(\dom)} p_K(\|u\|_{L^2(\dom)}^{2}) \,d\mu_0(u)\qquad \text{for a.e. } t\in [0,T].
	\end{multline}
	Thanks to the bounded support of $\mu_t$, all the moment terms have the form required to apply Theorem~\ref{thm:equivalencemeasures}, moreover, with Lemma~\ref{lem:gradientrep2}, we can rewrite the gradient term such that we obtain for a.e.\ $t\in [0,T]$, the energy inequality \eqref{eq:energycm}. Indeed, we have with Theorem~\ref{thm:equivalencemeasures}
	\begin{align*}
	\int_{L^2(\dom)}p_K( \|u\|_{L^2(\dom)}^{2}) \,d\mu_t(u) &= \sum_{k=0}^Ka_k \int_{L^2(\dom)}\|u\|_{L^2(\dom)}^{2k} \,d\mu_t(u)\\
	&= \sum_{k=0}^Ka_k \int_{L^2(\dom)}\int_{\dom^k} |u(x_1)|^2\dots|u(x_k)|^2 dx \,d\mu_t(u)\\
	&= \sum_{k=0}^Ka_k  \int_{\dom^k}\int_{U^k} |\xi_1|^2\dots|\xi_k|^2 d\nu_{t,x}^k(\xi) dx.
	\end{align*}
	Similarly, with Lemma~\ref{lem:gradientrep2}, (via Lemma~\ref{lem:gradientrepresentation})
	\begin{align*}
	&\int_0^t\int_{L^2(\dom)}p_K'\big(\|u\|_{L^2(\dom)}^{2}\big)|u|_{H^1(\dom)}^2 \,d\mu_s(u)\,ds\\
	 &\qquad= \sum_{k=0}^Ka_k k\int_0^t\int_{L^2(\dom)}\|u\|_{L^2(\dom)}^{2(k-1)}|u|_{H^1(\dom)}^2 \,d\mu_s(u) \,ds\\
	&\qquad= \sum_{k=0}^Ka_k k\int_0^t\int_{L^2(\dom)}\int_{\dom^{k}}|u(x_1)|^2\dots |u(x_{k-1})|^2 |\Grad u(x_k)|^2 dx \,d\mu_s(u)\,ds\\
	&\qquad= \sum_{k=0}^Ka_k\sum_{i=1}^k \int_0^t\int_{L^2(\dom)}\int_{\dom^{k}}|u(x_1)|^2\dots |\Grad u(x_i)|^2\dots |u(x_{k})|^2  dx \,d\mu_s(u)\,ds\\
	&\qquad= \sum_{k=0}^Ka_k\sum_{i=1}^k \sum_{j=1}^d\lim_{h\to 0}\frac{1}{h^2}\int_0^t\int_{\dom^{k}}\int_{U^{k+1}}|\xi_1|^2\dots |\xi_i-\xi_{k+1}|^2\dots |\xi^k|^2 d\nu^{k+1}_{t,(x,x_i+h\mathbf{e}_j)}(\xi,\xi_{k+1}) dx\,ds
	\end{align*}
	and a similar computation for the term involving $\mu_0$ yields~\eqref{eq:energycm}.
The bounded support of $\mu_0$ implies, using the equivalence theorem~\cite[Theorem 2.7]{FLM17},
\begin{equation*}
\int_{\dom^k}\!\!\int_{U^{k}}|\xi_1|^2\dots|\xi_k|^2 \,d\nu_{0,x}^k(\xi)\,dx<\infty
\end{equation*}
for any $k\in\N$ and thus the boundedness of all terms in~\eqref{eq:energycm}.
It remains to show that each $\nu^k$ satisfies \eqref{eq:divfreeconst}. We let $\varphi \in C^1_c(\dom^k;U^k)$, $1\leq \ell\leq k$, $\alpha_j\in C(U;U)$, with $\alpha_j(v)\leq C(1+|v|^2)$, $j=\ell,\dots, k$ and compute using the equivalence theorem~\ref{thm:equivalencemeasures},
\begin{equation*}
\begin{split}
&	\int_{\dom^{k}}\int_{U^{k}} \xi_1\otimes\dots\otimes\xi_{\ell}\otimes\alpha_{\ell+1}(\xi_{\ell+1})\otimes\dots \otimes \alpha_k(\xi_k) \,d\nu_{t,x}^k(\xi)\cdot\Grad_{x_1,\dots, x_\ell}\varphi(x) \,dx\\
& \qquad= \int_{L^2(\dom)}\int_{\dom^k}\!\!\! \begin{aligned}[t]&u(x_1)\otimes \dots\otimes u(x_\ell)\otimes \alpha_{\ell+1}(u(x_{\ell+1}))\otimes\dots\otimes \alpha_k(u(x_k))\\
&\cdot \Grad_{x_1,\dots, x_\ell}\varphi(x) \,dx \,d\mu_t(u)\end{aligned}\\
&\qquad = 0,
\end{split}
\end{equation*}
since $\mu_t$ is supported on divergence free functions. This concludes the proof.
\end{proof}

Let us show the reverse direction now, that is, that any correlation measure that solves the Friedman--Keller system and satisfies in addition an energy inequality, is a statistical solution of the Navier--Stokes equations in the sense of Foia\c{s}--Prodi.
\begin{theorem}[Friedman--Keller solutions are Foia\c{s}--Prodi solutions]\label{thm:FKsys2FT}
	Let $\cm=(\cm_t)_{0\leq t\leq T}$ be a Friedman--Keller statistical solution of Navier--Stokes (cf.\ Definition \ref{def:fksys}) with bounded support, i.e.,
	\begin{equation}\label{eq:bdmoments3}
	\int_{\dom^k}\int_{U^{k}} |\xi_1|^2\dots|\xi_k|^2 \,d\nu_{t,x}^k(\xi) \,dx\leq R^k<\infty,
	\end{equation}
	for some $0<R<\infty$, every $k\in \N$, and almost every $t\in [0,T]$. Then $\cm$ corresponds to a probability measure $\mu=(\mu_t)_{0\leq t\leq T}$ on a bounded set of $\hdiv$ which is a Foia\c{s}--Prodi statistical solution of the Navier--Stokes equations (cf.~Definition~\ref{def:foiastemamstatsol}).
\end{theorem}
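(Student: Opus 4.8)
The plan is to run the computations from the proof of Theorem~\ref{thm:FT2FKsys} in reverse, using the bounded-support hypothesis~\eqref{eq:bdmoments3} to justify every passage to the limit. First I would invoke the equivalence Theorem~\ref{thm:equivalencemeasures} (in the direction ``correlation measure $\Rightarrow$ parametrized measure'') to obtain a map $\mu\colon[0,T)\to\Prob(L^2(D;U))$ dual to $\cm$; property (i) of that theorem immediately yields the measurability condition \textbf{(a)} of Definition~\ref{def:foiastemamstatsol} for functionals of the form $L_g$, and a monotone-class argument together with the uniform bound~\eqref{eq:bdmoments3} extends it to all $\varphi\in C_b(\hdiv)$. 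The first genuinely new point is that $\mu_t$ is supported on divergence-free, mean-zero fields. For this I would test the divergence constraint~\eqref{eq:divfreeconst} with $k=\ell=2$ and $\varphi(x_1,x_2)=g(x_1)g(x_2)$ for a scalar $g$, which by the duality~\eqref{eq:corrmeastimeduality} gives exactly $\int_{L^2}\bigl(\int_D u\cdot\Grad g\,dx\bigr)^2\,d\mu_t(u)=0$; since $g$ is arbitrary and $L^2$ is separable, $\Div u=0$ for $\mu_t$-a.e.\ $u$, and the zero-mean normalisation is handled analogously, so $\supp\mu_t\subset\hdiv$.

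Next I would establish the weak formulation \textbf{(b)}. Since the Foia\c{s}--Prodi test functions in $\cyl^0$ use $g_j\in\V$, I would first approximate each $g_j$ by smooth divergence-free fields (truncating Fourier series on $\torus^d$) so that $\varphi(t,x)=\theta(t)\,g_1(x_1)\otimes\cdots\otimes g_k(x_k)$ is an admissible, divergence-free test function in~\eqref{eq:FKsys}. Plugging this in and writing $M(t):=\int_{L^2}(u,g_1)\cdots(u,g_k)\,d\mu_t(u)$, the moment equation reads $\int_0^T\theta'M\,dt+\theta(0)M(0)=-\int_0^T\theta\,R\,dt$, where $R$ is the integrable combination of the convective ($\Grad_{x_i}$) and viscous ($\Delta_{x_i}$) terms; this identifies $M'=R$ distributionally, and weak continuity of the moments (Remark~\ref{rem:weakcont}) upgrades it to the pointwise-in-$t$ identity. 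Because $\|u\|_{L^2}\le R$ on $\supp\mu_t$, the arguments $(u,g_j)$ range in a fixed compact box, so I would approximate an arbitrary $\phi\in C^1_c(\R^k)$ together with its first derivatives uniformly by polynomials (Stone--Weierstrass); linearity in the monomial identities then assembles the general cylindrical relation~\eqref{eq:statsolFT}, the integration-by-parts identities $-b(u,u,g_j)=\int(u\otimes u):\Grad g_j$ and $-a(u,g_j)=\vis\int u\cdot\Delta g_j$ matching $F$ in~\eqref{eq:F} term by term.

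For the energy inequality \textbf{(c)} I would reverse the moment/gradient computation at the end of the proof of Theorem~\ref{thm:FT2FKsys}: using Theorem~\ref{thm:equivalencemeasures} and the gradient-representation Lemma~\ref{lem:gradientrep2}, the polynomial energy inequality~\eqref{eq:energycm} for $p_K(s)=\sum_k a_k s^k$ translates exactly into~\eqref{eq:energystatstrong} with $\psi=p_K$, the difference-quotient term $\lim_{h\to0}h^{-2}\int|\xi_i-\xi_{k+1}|^2\cdots$ becoming $2\vis\,p_K'(\|u\|_{L^2}^2)|u|_{H^1}^2$. To reach a general nonnegative, nondecreasing $\psi\in C^1$ with bounded derivative I would approximate $\psi'\ge0$ uniformly on $[0,R^2]$ by nonnegative polynomials (for instance by squaring a polynomial approximation of $\sqrt{\psi'}$) and set $p_K(s)=\psi(0)+\int_0^s(\cdots)\,d\sigma$, which is nondecreasing; both sides of~\eqref{eq:energystatstrong} then pass to the limit on the compact support, preserving the \emph{inequality}. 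Finally, condition \textbf{(d)} follows from \textbf{(b)} and \textbf{(c)} exactly as in Remark~\ref{rem:integrability}.

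The main obstacle I anticipate is not any single estimate but the bookkeeping of the two layers of approximation needed to leave the ``polynomial/monomial'' world: passing from tensor-product test functions and monomials to arbitrary $C^1_c$ cylindrical functionals in \textbf{(b)}, and, more delicately, producing \emph{monotone} nonnegative polynomial approximants in \textbf{(c)} so that the inequality (not merely an identity) survives the limit. Both steps are made possible only by the uniform bounded-support assumption~\eqref{eq:bdmoments3}, which confines all relevant arguments to fixed compact sets and guarantees finiteness of every moment appearing in the limiting identities.
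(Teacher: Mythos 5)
Your proposal follows essentially the same route as the paper's proof: the equivalence theorem plus a monotone class argument for \textbf{(a)}, the divergence constraint~\eqref{eq:divfreeconst} tested with gradients to show $\supp\mu_t\subset\hdiv$ (the paper packages this as Lemma~\ref{lem:divconstraintlem}, you do the $k=\ell=2$ computation directly), tensor-product test functions in~\eqref{eq:FKsys} combined with $C^1$ Weierstrass approximation on the compact support for~\textbf{(b)}, the gradient-representation lemmas plus approximation by nondecreasing polynomials (the paper cites~\cite{Shisha1965}, you sketch the standard construction) for~\textbf{(c)}, and Remark~\ref{rem:integrability} for~\textbf{(d)}. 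The only minor ordering caveat is that the identification $(u,\Delta g_j)=-(Au,g_j)$ and $(u\otimes u,\Grad g_j)=-(B(u),g_j)$ in step~\textbf{(b)} uses that $\mu_t$ is supported on $H^1$ fields, which comes from the energy inequality of step~\textbf{(c)}, so that step should be established first, as the paper does.
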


\begin{proof}
From the equivalence theorem~\ref{thm:equivalencemeasures}, we obtain that $(\cm_t)_{0\leq t\le T}$ corresponds to a family of measures $(\mu_t)_{0\leq t\leq T}\subset\mathcal{P}(L^2(\dom;U))$ with bounded support, that is $\text{supp}(\mu_t)\subset \{u\in L^2(\dom;U)\,:\, \|u\|_{L^2(\dom;U)}\leq R\}$ for almost every $t\in [0,T]$. Property {\bf (a)} of Definition \ref{def:foiastemamstatsol} -- the fact that \eqref{eq:measurabilitymu} is measurable for all $\phi\in C_b(L^2)$ -- follows from a monotone class argument, which we include here. By property (i) of Theorem \ref{thm:equivalencemeasures}, the function
\begin{equation*}
t\mapsto \int_{L^2(\dom;U)}L_f(u) \,d\mu_t(u) = \int_{L^2(\dom;U)}\int_D f(x,u(x)) \,dx\,d\mu_t(u)
\end{equation*}
is measurable for every $f\in  L^1(\dom^k,C_0(U^k))$. Let $\mathbf{M}$ be the collection of sets
\[
\mathbf{M} = \left\{E\in \Borel(L^2(D;U)) \text{ such that } t \mapsto \int_{L^2(D;U)} \ind_E(u)\,d\mu_t(u) \text{ is measurable} \right\}.
\]
By the monotone convergence theorem, $\mathbf{M}$ is a monotone class, that is, it is closed under (countable) unions of increasing sequences of sets and intersections of decreasing sequences of sets. By the same argument as in the proof of \cite[Proposition 2.12]{FLM17}, $\mathbf{M}$ contains the collection of cylinder sets $\mathrm{Cyl}(L^2) = \{\text{all cylinder sets on }L^2(D;U)\}$, that is, all sets of the form $E = \big\{u\in L^2\ :\ (\ip{\phi_1}{u},\dots,\ip{\phi_n}{u})\in F\big\}$ for some $n\in\N$, a Borel set $F\subset\R^n$ and $\phi_1,\dots,\phi_n\in L^2(D;U)$. Since $\mathrm{Cyl}(L^2)$ is an algebra which generates $\Borel(L^2)$ (cf.~e.g.~\cite[Appendix]{FLM17}), it follows from the monotone class lemma that $\mathbf{M} = \sigma(\mathrm{Cyl}(L^2)) = \Borel(L^2)$. Approximating an arbitrary $\phi\in C_b(L^2(D;U))$ by simple functions now gives the desired conclusion.

We claim that $\mu_t$ is supported on $\hdiv$. This follows from Lemma~\ref{lem:divconstraintlem}: Since $\mu$ has bounded support on $L^2(\dom)$ we may take $\phi(\xi)=|\xi|^2$ in that lemma, which is continuous and bounded on any compact subset of $\R$ and satisfies $\phi(0)=0$. Then by the lemma, for any $g\in H^1(\dom)$,
\begin{equation*}
\int_{L^2(\dom)}\left|\int_{\dom} u\cdot\Grad g \,dx\right|^2\,d\mu_t(u) =0.
\end{equation*}
Hence, by Chebychev's inequality,
\begin{equation*}
\mu_t\left(\left\{u\in L^2(\dom): \,\int_{\dom} u\cdot\Grad g \,dx\neq 0\right\}\right)=0.
\end{equation*}
Since $g\in H^1(\dom;U)$ was arbitrary and $H^1(\dom;U)$ is separable, this implies that $\mu_t$ is supported on $L^2$-functions that are weakly divergence free, which is exactly the space $\hdiv$ (see e.g.~\cite[Section 1, Chapter 1]{Temam2001}).

Next we claim that $\mu$ satisfies condition {\bf (c)} in Definition~\ref{def:foiastemamstatsol}. As $\cm$ is assumed to satisfy~\eqref{eq:energycm}, we can apply \cite[Theorem 2.7]{FLM17} combined with Lemma~\ref{lem:gradientrep3} to each of the terms and obtain that $\mu_t$ satisfies~\eqref{eq:energystatmoments} for any nonnegative and nondecreasing polynomial $p_K(s)=\sum_{k=0}^K a_k s^k$ on $[0,R]$. This implies in particular that $\mu_t$ is supported on $\Hvec$, for a.e.~$t\in[0,T)$, and~\eqref{eq:h1linf}. Now any differentiable nondecreasing function $\psi$ on a bounded interval can be approximated by nondecreasing polynomials~\cite{Shisha1965}, from which~\eqref{eq:energystatstrong} follows after passing to the limit in a suitable polynomial approximation.
Specifically, for a given $\psi$, let $\{p_{K_n}\}_{n\in\N}$ be a sequence of nonnegative, nondecreasing polynomials ($K_n\to\infty$) satisfying
\begin{equation*}
\norm{p_{K_n}-\psi}_{C^1([0,R])}\leq \frac{1}{n}.
\end{equation*}
Then, for $t\geq 0$, by the compact support property of $\mu$ on $B$,
\begin{equation*}
\left|\int_{L^2(\dom)}\psi(\norm{u}_{L^2(\dom)}^2)d\mu_t(u)-\int_{L^2(\dom)} p_{K_n}(\norm{u}^2_{L^2(\dom)})d\mu_t(u)\right|\leq \norm{p_{K_n}-\psi}_{C^1([0,R])}\leq \frac{1}{n},
\end{equation*}
and
\begin{align*}
&\left|\int_0^t\int_{L^2(\dom)}\psi'(\norm{u}_{L^2(\dom)}^2) |u|^2_{H^1(\dom)}d\mu_t(u)ds-\int_0^t\int_{L^2(\dom)} p_{K_n}'(\norm{u}^2_{L^2(\dom)})|u|^2_{H^1(\dom)} d\mu_t(u)ds \right|\\
& \qquad\leq \int_0^t\int_{L^2(\dom)}\left|\psi'(\norm{u}_{L^2(\dom)}^2)-p_{K_n}'(\norm{u}^2_{L^2(\dom)})\right| |u|^2_{H^1(\dom)}d\mu_t(u)ds\\
&\qquad\leq \norm{p_{K_n}-\psi}_{C^1([0,R])}\int_0^t\int_{L^2(\dom)}|u|^2_{H^1(\dom)}d\mu_t(u)ds\\
&\qquad\leq \frac{C}{n},
\end{align*}
where the last inequality follows from~\eqref{eq:h1linf}.

It remains to show that the correlation measures satisfy the evolution equation~\eqref{eq:statsolFT} for all $\Phi\in \cyl^0$. From this also part {\bf (d)} of the definition will follow (see Remark~\ref{rem:integrability}). Let $\Phi(u) = \phi((u,g_1),\dots,(u,g_k))$ be an arbitrary function in $\cyl^0$ with $g_j\in \Hvec$ and let $\widetilde{R}=\max_{1\leq j\leq k}(\norm{g_j}_{L^2})$. Since $\phi$ is continuously differentiable and bounded on $B\coloneqq[-R\widetilde{R}-1,R\widetilde{R}+1]^k$, we can approximate it arbitrarily well by polynomials thanks to the Weierstrass approximation theorem. Let
\begin{equation*}
p_n(\zeta)\coloneqq\sum_{|\bj|=0}^{N_n}\beta_{\bj}\zeta_1^{j_1}\dots\zeta_k^{j_k},\quad j_i\geq 0,\quad \bj=(j_1,\dots,j_k),\quad |\bj|\coloneqq j_1+\dots+j_k,
\end{equation*}
for some $N_n\in\N$ large enough, be approximations of $\phi$ that satisfy
\begin{equation*}
 \norm{\phi-p_n}_{C^1(B)}\leq \frac{1}{n},\quad n\in\N.
\end{equation*}
Let $\theta\in C_c^1((0,T))$ be an arbitrary compactly supported test function. Since by the equivalence theorem, for any $\bj=(j_1,\dots, j_k)$, $j_i\geq 0$, $|\bj|=j_1+\dots +j_k$,
\begin{align*}
&\int_0^T\theta'(t)\int_{\dom^{|\bj|}}\left(\int_{U^{|\bj|}}\xi_{1}\otimes\cdots\otimes \xi_{|\bj|} \,d\nu_{t,x}^{|\bj|}(\xi)\right) \\
&: \Big(g_1(x_1)\otimes\cdots \otimes g_1(x_{j_1})\otimes \cdots \otimes g_k(x_{|\bj|-j_k+1})\otimes \cdots\otimes g_k(x_{|\bj|})\Big)  \,dx \,dt\\
&= \int_0^T\theta'(t) \int_{L^2(\dom)}(u,g_1)^{j_1}\dots (u,g_k)^{j_k} \,d\mu_t(u) \,dt,
\end{align*}
and if $\widetilde{j}=j_1+\dots + j_{i-1}$, then
\begin{align*}
&\sum_{\ell=\widetilde{j}+1}^{\widetilde{j}+j_i}\int_0^T\!\!\!\theta(t)\!\int_{\dom^{|\bj|}}\left(\int_{U^{|\bj|}} \xi_{1}\otimes\cdots\otimes \xi_{|\bj|} \,d\nu_{t,x}^{|\bj|}(\xi)\right)\\
&:  \Big(g_1(x_1)\otimes\cdots \otimes g_i(x_{\widetilde{j}+1})\otimes \dots \otimes \Delta_{x_{\ell}}g_i(x_{\ell})\otimes\dots\otimes g_i(x_{\widetilde{j}+j_i})\otimes\dots  \otimes g_k(x_{|\bj|})\Big)  \,dx \,dt\\
=&\ j_i\int_0^T\!\!\theta(t)\int_{L^2(\dom;U)}(u,g_1)^{j_1}\cdots (u, g_i)^{j_i-1}(u,\Delta g_i)\cdots (u,g_k)^{j_k} \,d\mu_t(u)  \,dt\\
=&\ j_i\int_0^T\!\!\theta(t)\int_{L^2(\dom;U)}(u,g_1)^{j_1}\cdots (u,g_i)^{j_i-1}(Au,g_i)\cdots (u,g_k)^{j_k} \,d\mu_t(u)  \,dt,
\end{align*}
where the last equality follows because $g_i$ is divergence free because $\mu_t$ is supported on $\Hvec$-functions by the energy inequality~\eqref{eq:energystatstrong}. Furthermore, we have
\begin{align*}
&\sum_{\ell=\widetilde{j}+1}^{\widetilde{j}+j_i}\int_0^T\!\!\!\theta(t)\!\int_{\dom^{|\bj|}}\left(\int_{U^{|\bj|}}\xi_{1}\otimes\cdots\otimes\xi_{\widetilde{j}+1}\otimes\cdots\otimes(\xi_{\ell}\otimes \xi_{\ell})\otimes\cdots\otimes\xi_{\widetilde{j}+j_i}\otimes\cdots\otimes \xi_{|\bj|} \,d\nu_{t,x}^{|\bj|}(\xi)\right)\\
&: \Big( g_1(x_1)\otimes \cdots \otimes\Grad_{x_{\ell}}g_i(x_{\ell})\otimes \cdots \otimes g_k(x_{|\bj|})\Big) \,dx \,dt\\
=&\ j_i\int_0^T\!\!\theta(t) \int_{L^2(\dom)}(u,g_1)^{j_1}\cdots (u, g_i)^{j_i-1}(u\otimes u,\Grad g_i)\cdots (u,g_k)^{j_k} \,d\mu_t(u) \,dt\\
=&\ -j_i\int_0^T\!\!\theta(t) \int_{L^2(\dom)}(u,g_1)^{j_1}\cdots (u,g_i)^{j_i-1}(B(u),g_i)\cdots (u,g_k)^{j_k} \,d\mu_t(u) \,dt,
\end{align*}
again, the last equality following because $g_i$ is divergence free and $\mu_t$ is supported on $\Hvec$-functions for almost every $t\in[0,T]$.
We obtain, by combining the terms to the Friedman--Keller system, that if
\begin{equation*}
\Phi_n(u) \coloneqq p_n\big((u,g_1),\dots,(u,g_k)\big) 
\end{equation*}
then $\mu$ satisfies the equation
\begin{equation}\label{eq:approxstatsol}
\int_0^T\int_{L^2(\dom)}\theta'(s)\Phi_n(u) \,d\mu_s(u)\,ds + \int_0^T\theta(s)\int_{L^2(\dom)} (F(s,u),\partial_u\Phi_n(u)) \,d\mu_s(u) \,ds=0
\end{equation}
for every test function $\theta\in C_c^1((0,T))$.
Next, note that
\begin{equation}\label{eq:approx1}
\begin{split}
&\left|\int_{L^2(\dom)}\Phi_{n}(u)\,d\mu_t(u) - \int_{L^2(\dom)}\Phi(u) \,d\mu_t(u)\right|=\left|\int_{L^2(\dom)}\left(\Phi_{n}(u)-\Phi(u)\right) \,d\mu_t(u)\right|\\
&\quad\leq \int_{L^2(\dom)}\big|p_n\big((u,g_1),\dots,(u,g_k)\big)-\phi\big((u,g_1),\dots,(u,g_k)\big)\big| \,d\mu_t(u)\\
&\quad \leq \int_{L^2(\dom)}\sup_{\zeta\in B}\left|p_n(\zeta)-\phi(\zeta)\right| \,d\mu_t(u)\\
&\quad= \norm{\phi-p_n}_{C^0(B)} \leq \frac{1}{n},
\end{split}
\end{equation}
because $\mu_t$ has support on $\{u\in L^2(\dom)\,:\, \|u\|_{L^2(\dom)}\leq R\}$. Moreover,
\begin{equation}\label{eq:approx2}
\begin{split}
&\left|\int_0^T\theta(s)\int_{L^2(\dom)}\big(\partial_u\Phi_n(u),F(s,u)\big)\,d\mu_s(u)\,ds - \int_0^T\theta(s)\int_{L^2(\dom)}\big(\partial_u\Phi(u),F(s,u)\big) \,d\mu_s(u)\,ds\right|\\
&\quad\leq\|\theta\|_{L^\infty}\int_0^T\int_{L^2(\dom)}\big|\big(\partial_u(\Phi(u)-\Phi_n(u)),F(s,u)\big)\big| \,d\mu_s(u)\,ds\\
&\quad \leq\|\theta\|_{L^\infty}\sum_{j=1}^k\int_0^T\int_{L^2(\dom)}\big|\partial_j\phi\big((u,g_1),\dots,(u,g_k)\big)-\partial_jp_n\big((u,g_1),\dots,(u,g_k)\big)\big|\big|(g_j,F(s,u))\big| \,d\mu_s(u)\,ds\\
&\quad \leq \|\theta\|_{L^\infty}\sum_{j=1}^k\sup_{B}\left|\partial_j(\phi-p_n)\right|\int_0^T\int_{L^2(\dom)}\big|(g_j,F(s,u))\big| \,d\mu_s(u)\,ds\\
&\quad\leq \|\theta\|_{L^\infty}\norm{\phi-p_n}_{C^1(B)}\sum_{j=1}^k\int_0^T\int_{L^2(\dom)}\big|(g_j,F(s,u))\big| \,d\mu_s(u)\,ds\\
&\quad\leq
\frac{1}{2n}\|\theta\|_{L^\infty} \sum_{j=1}^k\left(\norm{g_j}_{H^1(\dom)}^2+\int_0^T\int_{L^2(\dom)}
\norm{\Grad u}_{L^2(\dom)}^2 \,d\mu_s(u)\,ds \right)
\end{split}
\end{equation}
where we used Young's inequality for the last inequality.
Thanks to \eqref{eq:approx1} and \eqref{eq:approx2}, we can pass to the limit $n\rightarrow \infty$ in \eqref{eq:approxstatsol} and obtain
\begin{equation}\label{eq:approxstatsol2}
\int_0^T\theta'(s)\int_{L^2(\dom)}\Phi(u) \,d\mu_s(u)\,ds + \int_0^T\theta(s)\int_{L^2(\dom)} (F(s,u),\partial_u\Phi(u)) \,d\mu_s(u) \,ds=0
\end{equation}
for every $\theta\in C_c^1((0,T))$. It follows that the distributional derivative of $\varphi(s)\coloneqq \int_{L^2(\dom)}\Phi(u) \,d\mu_s(u)$ is
\[
\frac{d}{ds}\varphi(s) = \int_{L^2(\dom)}(F(s,u),\partial_u\Phi(s,u))\,d\mu_s(u),
\]
and since the right-hand side lies in $L^1((0,T))$ we see that $\varphi$ is absolutely continuous. Consequently, we can (through a standard approximation procedure) insert $\theta=\ind_{(0,t)}$ in \eqref{eq:approxstatsol2} and conclude that \eqref{eq:statsolFT} is true.

\end{proof}

	\section{Auxiliary lemmata}\label{appendix:auxilary}
	\begin{lemma}[Representation of the gradient]
		\label{lem:gradientrepresentation}
	%\footnote{this lemma is mainly to convince myself that this fact is true}
	Let $(\mu_t)_{0\leq t\leq T}$ be a measure on $L^2(\dom)$ satisfying
	\begin{equation}\label{eq:bdl2gradient}
	\int_0^T\int_{L^2(\dom)}\norm{\Grad u}_{L^2(\dom)}^2 \,d\mu_t(u) \,dt\leq C<\infty.
	\end{equation}
	Define for $h\in\R$ the finite difference gradient $\Grad_h$ by
	\begin{equation*}
	\Grad_h f(x) = \left(	D^h_1 f(x),\dots, D^h_d f(x)\right), \quad D^h_j f(x) = \frac{f(x+ h \mathbf{e}_j)- f(x)}{h},
	\end{equation*}
	where $\mathbf{e}_j\in \R^d$ is the $j$th unit vector. Then we have for any $V\subset\subset \dom$ (or in the case that $\dom$ is the torus also for $V=\dom$)
	\begin{equation}
	\label{eq:gradientid}
		\int_0^T\int_{L^2(\dom)}\norm{\Grad u}_{L^2(V)}^2 \,d\mu_t(u) \,dt = \lim_{h\rightarrow 0} \int_0^T\int_{L^2(\dom)}\norm{\Grad_h u}_{L^2(V)}^2 \,d\mu_t(u) \,dt.
	\end{equation}
	\end{lemma}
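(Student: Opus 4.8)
The plan is to reduce the identity to the classical fact that difference quotients converge to the gradient in $L^2$ for a fixed $H^1$ function, and then to upgrade this pointwise statement to the integral identity \eqref{eq:gradientid} by dominated convergence. First I would record the deterministic ingredient. For any $u\in H^1(\dom)$ and any $V\subset\subset\dom$ (or $V=\dom$ on the torus), and for $|h|$ small enough that the relevant translates stay in $\dom$, one writes
\[
D_j^h u(x)=\frac{1}{h}\int_0^h \partial_j u(x+s\mathbf{e}_j)\,ds=\int_0^1 \partial_j u(x+\theta h\mathbf{e}_j)\,d\theta,
\]
so that Minkowski's integral inequality (together with translation invariance on the torus, or the inclusion of the relevant neighborhood of $V$ in $\dom$ in the general case) yields the uniform bound $\|\Grad_h u\|_{L^2(V)}\le\|\Grad u\|_{L^2(\dom)}$ for all small $|h|$. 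Combining this bound with the density of $C^\infty$ in $H^1$ gives the convergence $\Grad_h u\to\Grad u$ in $L^2(V)$, and hence $\|\Grad_h u\|_{L^2(V)}^2\to\|\Grad u\|_{L^2(V)}^2$ as $h\to0$.

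Second, I would pass from this to the claimed identity. The integrability hypothesis \eqref{eq:bdl2gradient} forces $\|\Grad u\|_{L^2(\dom)}<\infty$ for $\mu_t$-a.e.\ $u$ and a.e.\ $t\in[0,T]$, so $\mu_t$-a.e.\ $u$ lies in $H^1(\dom)$ and the deterministic convergence above holds $\mu_t$-a.e.\ and for a.e.\ $t$. Fixing an arbitrary sequence $h_n\to0$, I would then invoke the dominated convergence theorem on $[0,T]\times L^2(\dom)$ endowed with $d\mu_t(u)\,dt$, using the dominating function $(t,u)\mapsto\|\Grad u\|_{L^2(\dom)}^2$: it is integrable by \eqref{eq:bdl2gradient} and majorizes each integrand $\|\Grad_{h_n}u\|_{L^2(V)}^2$ by the uniform bound established above. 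Since the resulting limit is independent of the chosen sequence $h_n$, the continuous limit $h\to0$ exists and equals $\int_0^T\!\int_{L^2}\|\Grad u\|_{L^2(V)}^2\,d\mu_t(u)\,dt$, which is precisely \eqref{eq:gradientid}.

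Third, I would dispatch the measurability points needed for the above to make sense. For each fixed $h$ the map $u\mapsto\Grad_h u$ is bounded and linear on $L^2(\dom)$, so $u\mapsto\|\Grad_h u\|_{L^2(V)}^2$ is continuous, hence Borel; the functional $u\mapsto\|\Grad u\|_{L^2(\dom)}^2$ is lower semicontinuous on $L^2(\dom)$ (being the monotone supremum of the continuous functionals $\|\Grad_h u\|_{L^2}^2$ as $h\to0$, up to the torus normalization), hence Borel as well; and the measurability of $t\mapsto\int_{L^2}(\cdots)\,d\mu_t(u)$ is part of the hypotheses on the family $(\mu_t)$. This ensures the iterated integrals are well defined and that Tonelli and the dominated convergence theorem apply on the product.

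This is a routine lemma, and the only points requiring genuine care are the uniform-in-$h$ domination $\|\Grad_h u\|_{L^2(V)}\le\|\Grad u\|_{L^2(\dom)}$, which is exactly what makes the dominated convergence theorem applicable, and the reduction of the continuous limit $h\to0$ to sequential limits so that the theorem can be invoked. Everything else is bookkeeping with the standard $H^1$ difference-quotient estimates and the measurability setup inherited from the definition of $(\mu_t)$.
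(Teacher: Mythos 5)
Your proposal is correct and follows essentially the same route as the paper's proof: establish the deterministic convergence $\|\Grad_h u\|_{L^2(V)}^2\to\|\Grad u\|_{L^2(V)}^2$ for $u\in H^1$ via the difference-quotient representation, a uniform-in-$h$ bound (which the paper cites from Evans as $\|\Grad_h u\|_{L^2(V)}\le C_V\|\Grad u\|_{L^2(\dom)}$ and you derive directly with Minkowski's inequality), and density of smooth functions, then pass to the limit under $d\mu_t(u)\,dt$ by dominated convergence using \eqref{eq:bdl2gradient}. Your added remarks on measurability and on reducing the continuous limit to sequential limits are sound refinements of details the paper leaves implicit.
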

	\begin{proof}
		Consider a compact $V\subset\subset \dom$  and a smooth $u\in C^\infty(\dom)$. Then, since we can write
		\begin{equation}
		D^h_j u(x) = \int_0^1 \partial_{x^j}u(x+t h \mathbf{e}_j) \,dt,
		\end{equation}
		and hence
		\begin{equation}
		|\Grad_h u(x)|\leq \sum_{j=1}^d\int_{0}^{1}|\partial_{x^j}u(x+t h \mathbf{e}_j)| \,dt\leq \sum_{j=1}^{d}\norm{\partial_{x^j}u}_{L^\infty(V)}\leq C\norm{\Grad u}_{L^\infty(V)},
		\end{equation}
		for all $x\in V$ and $\int_{V}C\norm{\Grad u}_{L^\infty(V)} \,dx<\infty$, we can use the dominated convergence theorem to obtain that
		\begin{equation*}
		\int_{V}|\Grad u|^2 \,dx =\lim_{h\rightarrow 0}\int_V |\Grad_h u|^2 \,dx.
		\end{equation*}
	Moreover, by~\cite[Theorem 3, Section 5.8.2]{Evans2010}, we have for any $u\in H^1(\dom)$,
		\begin{equation}\label{eq:evans582}
		%c_V\norm{\Grad u}_{L^2(V)}\leq
		\norm{\Grad_h u}_{L^2(V)}\leq C_V \norm{\Grad u}_{L^2(\dom)},
		\end{equation}%\todof{wrong??}
		for some constants $0\leq c_V,C_V<\infty$ and any $V\subset\subset\dom$.
	Since $C^\infty(\dom)$ is dense in $H^1(\dom)$,
	we can find a sequence of functions $(u_n)_{n\in\N}\subset C^\infty(\dom)$ such that
	\begin{equation*}
	\lim_{n\rightarrow \infty}\norm{u_n-u}_{H^1(V)} = 0,
	\end{equation*}
	for any $V\subset \dom$ bounded.
	So fix $V\subset\subset \dom$ and an arbitrary $\eps>0$ and pick $N_\eps\in\N$ large enough such that for all $n\geq N_\eps$,
	\begin{equation*}
	\norm{u-u_n}_{H^1(V)}\leq \frac{1}{2}\frac{1}{1+C_V}\eps,
	\end{equation*}
	where $C_V$ is the constant in \eqref{eq:evans582}.
	Then for $n\geq N_\eps$
	\begin{align*}
	\norm{\Grad u - \Grad_h u}_{L^2(V)}&\leq \norm{\Grad (u-u^n)}_{L^2(V)}+ \norm{\Grad u^n -\Grad_h u^n}_{L^2(V)}+ \norm{\Grad_h (u^n-u)}_{L^2(V)}\\
	&\leq \norm{\Grad (u-u^n)}_{L^2(V)}+ \norm{\Grad u^n -\Grad_h u^n}_{L^2(V)}+ C_V\norm{\Grad (u^n-u)}_{L^2(V)}\\
	&\leq \frac{\eps}{2}+\norm{\Grad u^n -\Grad_h u^n}_{L^2(V)}.
	\end{align*}
	Now pick $h_0$ with $|h_0|$ small enough that
	\begin{equation*}
	\norm{\Grad u^n -\Grad_h u^n}_{L^2(V)}\leq\frac{\eps}{2}
	\end{equation*}
	for all $h$ with $|h|\leq |h_0|$.
	Then
	\begin{equation*}
	\norm{\Grad u - \Grad_h u}_{L^2(V)}\leq \eps
	\end{equation*}
	for all $h$ with $|h|\leq |h_0|$. Since $\eps>0$ was arbitrary, we obtain
	\begin{equation}\label{eq:convgrad}
	\lim_{h\rightarrow 0} \int_V |\Grad_h u|^2 \,dx = \int_V |\Grad u|^2 \,dx.
	\end{equation}
	Combining~\eqref{eq:evans582} with \eqref{eq:bdl2gradient}, we can apply the dominated convergence theorem and use \eqref{eq:convgrad} to pass to the limit:
	\begin{equation*}
	\begin{split}
			\int_0^T\int_{L^2(\dom)}\norm{\Grad u}_{L^2(V)}^2 \,d\mu_t(u) \,dt& =  \int_0^T\int_{L^2(\dom)}\lim_{h\rightarrow 0}\norm{\Grad_h u}_{L^2(V)}^2 \,d\mu_t(u) \,dt\\
			&= \lim_{h\rightarrow 0} \int_0^T\int_{L^2(\dom)}\norm{\Grad_h u}_{L^2(V)}^2 \,d\mu_t(u) \,dt.
	\end{split}
\end{equation*}
\end{proof}
\begin{corollary}\label{cor:bla}
	Let $(\mu_t)_{0\leq t\leq T}$ be a measure on $L^q(\dom)$. If $\mu_t$ satisfies
	\begin{equation*}
	\int_0^T\int_{L^q(\dom)}\norm{\Grad u}_{L^q(\dom)}^q \,d\mu_t(u) \,dt\leq C<\infty,
	\end{equation*}
	then for any $V\subset\subset \dom$ (or if $\dom$ is the torus also for $V=\dom$)
	\begin{equation*}
	\int_0^T\int_{L^q(\dom)}\norm{\Grad u}_{L^q(V)}^q \,d\mu_t(u) \,dt =\lim_{h\rightarrow 0}\int_0^T\int_{L^q(\dom)}\norm{\Grad_h u}_{L^q(V)}^q \,d\mu_t(u) \,dt.
	\end{equation*}

\end{corollary}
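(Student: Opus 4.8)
The plan is to repeat, essentially verbatim up to replacing the exponent $2$ by $q$, the three-step argument used to prove Lemma~\ref{lem:gradientrepresentation}. The only structural ingredient whose survival from $q=2$ to general $q$ must be checked is the uniform difference-quotient estimate, so I would begin by isolating that point and otherwise follow the lemma line for line.

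First I would establish the fixed-function convergence $\lim_{h\to0}\norm{\Grad_h u}_{L^q(V)}^q = \norm{\Grad u}_{L^q(V)}^q$ for each $u$ with $\Grad u\in L^q(\dom)$. For smooth $u\in C^\infty(\dom)$ this is immediate: the representation $D^h_j u(x)=\int_0^1 \partial_{x^j}u(x+th\mathbf{e}_j)\,dt$ gives, for $h$ small, the pointwise bound $|\Grad_h u(x)|\leq C\norm{\Grad u}_{L^\infty}$ (the supremum taken over a fixed compact neighborhood of $V$), which is constant and hence integrable on the bounded set $V$, while $\Grad_h u(x)\to\Grad u(x)$ pointwise; dominated convergence then yields the claim. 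To extend this to every $u$ with $\Grad u\in L^q(\dom)$ I would invoke the $L^q$ difference-quotient estimate $\norm{\Grad_h u}_{L^q(V)}\leq C_V\norm{\Grad u}_{L^q(\dom)}$, valid uniformly in small $h$ (the $L^q$ analogue of~\cite[Theorem 3, Section 5.8.2]{Evans2010}, obtainable directly from the integral representation above together with Minkowski's integral inequality). Approximating $u$ by $u_n\in C^\infty(\dom)$ in $W^{1,q}(V)$ and splitting
\begin{equation*}
\norm{\Grad u-\Grad_h u}_{L^q(V)}\leq \norm{\Grad(u-u_n)}_{L^q(V)}+\norm{\Grad u_n-\Grad_h u_n}_{L^q(V)}+C_V\norm{\Grad(u_n-u)}_{L^q(V)}
\end{equation*}
exactly as in the proof of the lemma, the first and third terms are controlled by the approximation and the middle term by the smooth case, giving $\norm{\Grad_h u}_{L^q(V)}\to\norm{\Grad u}_{L^q(V)}$.

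Finally I would promote this to the statement under $\mu_t$. The difference-quotient estimate furnishes the dominating bound $\norm{\Grad_h u}_{L^q(V)}^q\leq C_V^q\norm{\Grad u}_{L^q(\dom)}^q$, uniform in small $h$, and the hypothesis $\int_0^T\int_{L^q(\dom)}\norm{\Grad u}_{L^q(\dom)}^q\,d\mu_t(u)\,dt\leq C$ guarantees that $(t,u)\mapsto\norm{\Grad u}_{L^q(\dom)}^q$ is integrable with respect to $d\mu_t(u)\,dt$. The dominated convergence theorem then lets me interchange $\lim_{h\to0}$ with the double integral, which is precisely the asserted identity.

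The proof presents no essential obstacle beyond the $q=2$ case; the only point requiring care is the uniform difference-quotient bound in $L^q$, since the cited reference is stated for $q=2$. For $1<q<\infty$ this is standard, and for $q=1$ one needs only the forward inequality (which still holds) rather than the characterization of $W^{1,1}$ by bounded difference quotients, so the direction used here is unaffected. The torus case $V=\dom$ is handled as before, the hypothesis $V\subset\subset\dom$ being needed only to suppress boundary effects that are absent on $\mathbb{T}^d$.
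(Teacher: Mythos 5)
Your proof is correct and is essentially the paper's intended argument: the corollary is stated without a separate proof precisely because it follows by running the proof of Lemma~\ref{lem:gradientrepresentation} with $2$ replaced by $q$, and you correctly identify (and resolve) the one point needing verification, namely the $L^q$ difference-quotient bound $\norm{\Grad_h u}_{L^q(V)}\leq C_V\norm{\Grad u}_{L^q(\dom)}$, which follows for all $q\geq 1$ from the integral representation of $D^h_j u$ together with Minkowski's integral inequality. Nothing further is needed.
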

\begin{lemma}
	[More representation of gradients]\label{lem:gradientrep2}
	Let $(\mu_t)_{0\leq t\leq T}$ be a measure on $L^q(\dom)$, where $\dom$ is the $d$-dimensional torus satisfying
	\begin{equation}\label{eq:aprioribd}
	\int_0^T\int_{L^q(\dom)} \norm{\Grad u}_{L^q(\dom)}^q \,d\mu_t(u)\,dt\leq C<\infty.
	\end{equation}
	Then we can represent the integral of the gradient in terms of correlation measures as
	\begin{equation}\label{eq:corgradient}
	\int_0^T\int_{L^q(\dom)} \norm{\Grad u}_{L^q(\dom)}^q \,d\mu_t(u)\,dt = \lim_{h\rightarrow 0}\frac{1}{h^q}\int_0^T \int_{\dom}\sum_{j=1}^d\int_{U^2}|\xi_{1}-\xi_2|^q \,d\nu^2_{x,x+h\mathbf{e}_j}(\xi) \,dx \,dt.
	\end{equation}
		\end{lemma}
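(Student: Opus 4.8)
The plan is to reduce the genuine gradient on the left-hand side to finite differences, and then to convert finite differences of $u$ into two-point evaluations of the correlation measure via the duality of Theorem~\ref{thm:equivalencemeasures}. Throughout I shall write $\cm=(\nu^1,\nu^2,\dots)$ for the two-point correlation dual to $\mu$ furnished by Theorem~\ref{thm:equivalencemeasures}, and use the finite-difference gradient $\Grad_h u=(D_1^h u,\dots,D_d^h u)$ from Lemma~\ref{lem:gradientrepresentation}.

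First I would invoke Corollary~\ref{cor:bla} with $V=\dom$ (admissible since $\dom$ is the torus) to replace the true gradient by the finite-difference gradient:
\[
\int_0^T\!\!\int_{L^q(\dom)}\norm{\Grad u}_{L^q(\dom)}^q\,d\mu_t(u)\,dt=\lim_{h\to0}\int_0^T\!\!\int_{L^q(\dom)}\norm{\Grad_h u}_{L^q(\dom)}^q\,d\mu_t(u)\,dt.
\]
In the case $q=2$ used in the applications (Theorems~\ref{thm:FT2FKsys_s} and~\ref{thm:FKsys2FT_s}) the Euclidean norm decouples over coordinates, so that $\norm{\Grad_h u}_{L^2(\dom)}^2=\sum_{j=1}^d\norm{D_j^h u}_{L^2(\dom)}^2$, and by the definition of $D_j^h$,
\[
\norm{\Grad_h u}_{L^2(\dom)}^2=\frac1{h^2}\sum_{j=1}^d\int_\dom\bigl|u(x+h\mathbf{e}_j)-u(x)\bigr|^2\,dx.
\]
For general $q$ one reads $\norm{\Grad_h u}_{L^q}^q$ componentwise, i.e.\ as $\sum_j\norm{D_j^h u}_{L^q}^q$, so that the same decoupling holds and the right-hand side of~\eqref{eq:corgradient} is matched exactly.

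The core of the argument is the second step: for each fixed $h>0$, $1\le j\le d$ and a.e.\ $t$, I claim that
\[
\int_{L^q(\dom)}\int_\dom\bigl|u(x+h\mathbf{e}_j)-u(x)\bigr|^q\,dx\,d\mu_t(u)=\int_\dom\int_{U^2}|\xi_1-\xi_2|^q\,d\nu^2_{t,x,x+h\mathbf{e}_j}(\xi)\,dx,
\]
which is the duality~\eqref{eq:corrmeastimeduality} for the two-point marginal $\nu^2$ evaluated along the shift $y=x+h\mathbf{e}_j$. Substituting this identity, summed over $j$, integrated in $t$, and divided by $h^q$, into the first display and letting $h\to0$ then yields~\eqref{eq:corgradient}.

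The main obstacle is justifying this last displayed identity along the \emph{shifted diagonal} $\{y=x+h\mathbf{e}_j\}$, which is a null set in $\dom^2$, so that~\eqref{eq:corrmeastimeduality} (phrased for full integration over $\dom^k$) does not apply verbatim. I would resolve this by exploiting the structure of correlation measures: the map $x\mapsto\bigl(u(x),u(x+h\mathbf{e}_j)\bigr)$ pushes $\mu_t$ forward to a Young measure on $U^2$ whose law coincides, for a.e.\ $x$, with the evaluation $\nu^2_{t,x,x+h\mathbf{e}_j}$ of the two-point correlation; this compatibility of fixed shifts with two-point correlations is precisely what makes quantities such as the diagonal-continuity modulus in~\eqref{eq:timedepdc} well defined, and I would invoke (or adapt) the corresponding construction in~\cite{FLM17,systemspaper}. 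A secondary technical point is that $|\xi_1-\xi_2|^q$ has $q$-growth and hence lies outside $C_0(U^2)$, so one must use the extended duality for Carath\'eodory functions of class $\Caratheodory^{k,q}$; the relevant integrands are uniformly controlled in $h$ by the finite-difference estimate~\eqref{eq:evans582} together with the hypothesis~\eqref{eq:aprioribd}, which simultaneously legitimises the interchange of $\lim_{h\to0}$ with the time integral.
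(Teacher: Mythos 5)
Your overall strategy --- reduce the gradient to finite differences via Lemma~\ref{lem:gradientrepresentation}/Corollary~\ref{cor:bla}, then convert the shifted difference $\int_\dom|u(x+h\mathbf{e}_j)-u(x)|^q\,dx$ into a two-point evaluation of $\nu^2$ --- is the same as the paper's, and you correctly locate the crux: the duality \eqref{eq:corrmeastimeduality} is an identity for integrals over all of $\dom^2$, whereas here one needs $\nu^2_{t,x,x+h\mathbf{e}_j}$ along a Lebesgue-null shifted diagonal, where an $L^\infty_w$ representative is a priori not even defined. The gap is that you do not actually close this point: you assert that the pushforward of $\mu_t$ under $x\mapsto(u(x),u(x+h\mathbf{e}_j))$ coincides with $\nu^2_{t,x,x+h\mathbf{e}_j}$ and propose to ``invoke or adapt the corresponding construction'' in the references. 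That is precisely the statement that needs proof, and your supporting remark is not quite right either: the diagonal-continuity modulus $\omega^p_r(\nu^2_t)$ involves an average of $y$ over the positive-measure set $B_r(x)$, so it is well defined for any representative and does not by itself give meaning to fixed-shift evaluations.

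The paper supplies the missing argument concretely, and it has two ingredients you would need. First, mollification in the second variable: by continuity of translations in $L^q$,
\begin{equation*}
\int_\dom|u(x)-u(x+h\mathbf{e}_j)|^q\,dx=\lim_{\eps\to0}\int_\dom\intavg_{B_\eps(x+h\mathbf{e}_j)}|u(x)-u(y)|^q\,dy\,dx,
\end{equation*}
and the mollified quantity is a genuine integral over $\dom^2$ of a Carath\'eodory integrand with $q$-growth, to which the (extended) duality of \cite[Theorem 2.7]{FLM17} applies exactly. Second, to remove the mollifier on the correlation-measure side one uses the \emph{consistency} of the hierarchy --- passing to the three-point marginal $\nu^3_{x,y,x+h\mathbf{e}_j}$, applying the triangle inequality in $L^q(\nu^3)$, and invoking diagonal continuity to show the error term $\int_\dom\intavg_{B_\eps(x+h\mathbf{e}_j)}\Ypair{\nu^2_{y,x+h\mathbf{e}_j}}{|\xi_1-\xi_2|^q}\,dy\,dx$ vanishes as $\eps\to0$. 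This step both proves the identity and gives the precise meaning of the right-hand side of \eqref{eq:corgradient}; without it your ``core claim'' remains unproven. The remaining interchanges of limits (in $\eps$, $t$ and $h$) are then justified by dominated convergence using \eqref{eq:aprioribd} and the one-point $L^q$ bound, much as you indicate.
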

\begin{proof}
	From Lemma \ref{lem:gradientrepresentation}, we know that we can represent the left hand side of \eqref{eq:corgradient} as the limit as $h$ goes to zero of
	\begin{equation*}
	\frac{1}{h^q}\int_0^T\int_{L^q(\dom)}\int_{\dom}|u(x+h)-u(x)|^q \,dx \,d\mu_t(u) \,dt.
		\end{equation*}

Note that for arbitrary nonnegative functions $\varphi\in L^1(\dom)$ with $\varphi_\eps (x)\coloneqq\eps^{-d}\varphi(x/\eps)$, we have that
\begin{equation*}
\int_{\dom}\int_{\dom}|f(x)-f(x-y)|^q\varphi_\eps(y) \,dy \,dx\stackrel{\eps\to 0}{\longrightarrow} 0,
\end{equation*}
for any $f\in L^q(\dom)$, since shifts are continuous in $L^p$ for $L^p$-functions.
In particular, choosing $\varphi(x)=\frac{1}{|B_1(0)|}\mathbbm{1}_{B_1(0)}(x)$ this yields
\begin{equation}\label{eq:lpcont}
\int_{\dom}\intavg_{B_\eps(x)} |f(x)-f(y)|^q \,dy \,dx\stackrel{\eps\to 0}{\longrightarrow} 0,
\end{equation}
and
\begin{equation*}
\int_{\dom}\intavg_{B_\eps(x+h\mathbf{e}_j)} |f(x)-f(y)|^q \,dy \,dx\stackrel{\eps\to 0}{\longrightarrow} \int_{\dom} |f(x)-f(x+h\mathbf{e}_j)|^q  \,dx,
\end{equation*}
for any fixed $h>0$ and $\mathbf{e}_j$ is the $j$th unit vector.
We have for any $\eps>0$,
\begin{align}
&\Biggl|\frac{1}{h}\left(\int_0^T\!\!\!\int_{L^q(\dom)}\int_{\dom}\!\!\!|u(x)-u(x+h\mathbf{e}_j)|^q  \,dx\,d\mu_t(u) \,dt\right)^{1/q}\\
&\quad\qquad -\frac{1}{h}\left(\int_0^T\!\!\!\int_{L^q(\dom)}\int_{\dom}\intavg_{B_\eps(x+h\mathbf{e}_j)} |u(x)-u(y)|^q \,dy \,dx\,d\mu_t(u) \,dt\right)^{1/q}\Biggr|\\
&\quad\leq \left(\frac{1}{h^q}\int_0^T\!\!\!\int_{L^q(\dom)}\int_{\dom}\intavg_{B_\eps(x+h\mathbf{e}_j)} |u(x+h\mathbf{e}_j)-u(y)|^q \,dy \,dx\,d\mu_t(u) \,dt\right)^{1/q}\\
&\quad = \left(\frac{1}{h^q}\int_0^T\!\!\!\int_{L^q(\dom)}\int_{\dom}\intavg_{B_\eps(x)} |u(x)-u(y)|^q \,dy \,dx\,d\mu_t(u) \,dt\right)^{1/q} \stackrel{\eps\to 0}{\longrightarrow} 0,
\end{align}
by~\eqref{eq:lpcont} and Lebesgue dominated convergence theorem.
Hence
\begin{multline}
\frac{1}{h^q}\int_0^T\!\!\!\int_{L^q(\dom)}\int_{\dom}\!\!\!|u(x)-u(x+h\mathbf{e}_j)|^q  \,dx\,d\mu_t(u) \,dt \\
=\frac{1}{h^q}\lim_{\eps\to 0}\int_0^T\!\!\!\int_{L^q(\dom)}\int_{\dom}\intavg_{B_\eps(x+h\mathbf{e}_j)} |u(x)-u(y)|^q \,dy \,dx\,d\mu_t(u) \,dt.
\end{multline}
Now we can apply~\cite[Theorem 2.7]{FLM17} to rewrite the last expression in terms of the unique correlation measure $\cm_t$ to which $\mu_t$ corresponds:
\begin{multline*}
\frac{1}{h^q}\lim_{\eps\rightarrow 0}\int_0^T\!\!\!\int_{L^q(\dom)}\int_{\dom}\intavg_{B_\eps(x+h\mathbf{e}_j)} |u(x)-u(y)|^q \,dy \,dx\,d\mu_t(u) \,dt \\
= \frac{1}{h^q}\lim_{\eps\rightarrow 0}\int_0^T\!\!\!\int_{\dom}\intavg_{B_\eps(x+h\mathbf{e}_j)}\int_{U^{2}} |\xi_{1}-\xi_{2}|^q \,d\nu_{x,y}^2(\xi) \,dy \,dx \,dt
\end{multline*}
Because of the equivalence theorem~\cite[Theorem 2.7]{FLM17} and assumption \eqref{eq:aprioribd} combined with Corollary~\ref{cor:bla}, we have
\begin{equation*}
\lim_{\eps\rightarrow 0}\frac{1}{h^q}\int_0^T\int_{\dom}\intavg_{B_\eps(x+h\mathbf{e}_j)}\int_{U^{2}}|\xi_{1}-\xi_{2}|^q \,d\nu_{x,y}^2(\xi) \,dy \,dx \,dt<\infty,
\end{equation*}
uniformly in $0<h<h_0$ for a small enough $h_0$.
Since
\begin{align*}
&\int_{\dom}\intavg_{B_\eps(x+h\mathbf{e}_j)}\int_{U^{2}}|\xi_{1}-\xi_{2}|^q \,d\nu_{x,y}^2(\xi) \,dy \,dx \\
&\qquad\leq C \int_{\dom}\intavg_{B_\eps(x+h\mathbf{e}_j)}\int_{U^{2}}\left(|\xi_{1}|^q+|\xi_{2}|^q\right) \,d\nu_{x,y}^2(\xi) \,dy \,dx\\
&\stackrel{\text{consistency}}{=}2 C \int_{\dom}\int_{U}|\xi_1|^q \,d\nu_{x}^1(\xi_1) \,dx \leq C<\infty \quad \text{ for almost every } t,
\end{align*}
we can apply the dominated convergence theorem and pass the limit in $\eps$ inside:
\begin{multline}\label{eq:dctx}
\lim_{\eps\rightarrow 0}\frac{1}{h^q}\int_0^T\int_{\dom}\intavg_{B_\eps(x+h\mathbf{e}_j)}\int_{U^{2}}|\xi_{1}-\xi_{2}|^q \,d\nu_{x,y}^2(\xi) \,dy \,dx \,dt\\
 = \frac{1}{h^q}\int_0^T\lim_{\eps\rightarrow 0}\int_{\dom}\intavg_{B_\eps(x+h\mathbf{e}_j)}\int_{U^{2}}|\xi_{1}-\xi_{2}|^q \,d\nu_{x,y}^2(\xi) \,dy \,dx \,dt
\end{multline}

Now fix $\eps>0$, a `good' $t\in [0,T]$,
and note that for almost every $x\in \dom$,
\begin{align*}
&\left|\left(\int_{\dom}\intavg_{B_\eps(x+h\mathbf{e}_j)} \int_{U^{2}} |\xi_{1}-\xi_{2}|^q \,d\nu_{x,y}^2(\xi) \,dy \,dx\right)^{1/q} - \left(\int_{\dom} \int_{U^{2}} |\xi_{1}-\xi_{2}|^q \,d\nu_{x,x+h\mathbf{e}_j}^2(\xi) \,dx\right)^{1/q} \right|\\
&\qquad \stackrel{\text{consistency}}{=}\Biggl|\left(\int_{\dom}\intavg_{B_\eps(x+h\mathbf{e}_j)} \int_{U^{3}} |\xi_{1}-\xi_{2}|^q \,d\nu_{x,y,x+h\mathbf{e}_j}^3(\xi) \,dy \,dx\right)^{1/q}\\
&\qquad\hphantom{\stackrel{\text{consistency}}{=}\Bigl|}-\left(\int_{\dom}\intavg_{B_\eps(x+h\mathbf{e}_j)} \int_{U^{3}} |\xi_{1}-\xi_{3}|^q \,d\nu_{x,y,x+h\mathbf{e}_j}^3(\xi)\,dy \,dx\right)^{1/q}\Biggr|\\
&\qquad\stackrel{\Delta\text{-ineq.}}{\leq}\left(\int_{\dom}\intavg_{B_\eps(x+h\mathbf{e}_j)} \int_{U^{3}} |\xi_{3}-\xi_{2}|^q \,d\nu_{x,y,x+h\mathbf{e}_j}^3(\xi) \,dy \,dx\right)^{1/q}\\
&\qquad \stackrel{\text{consistency}}{=}\left(\int_{\dom}\intavg_{B_\eps(x+h\mathbf{e}_j)} \int_{U^{2}} |\xi_{2}-\xi_{1}|^q \,d\nu_{y,x+h\mathbf{e}_j}^2(\xi) \,dy \,dx\right)^{1/q}
\end{align*}
The last term goes to zero by the diagonal continuity property of $\cm$ (c.f.~\cite[Definition 2.5]{FLM17}). Hence, for almost every $t\in [0,T]$,
\begin{equation}\label{eq:weisnoed}
\lim_{\eps\to 0}\int_{\dom}\intavg_{B_\eps(x+h\mathbf{e}_j)} \int_{U^{2}} |\xi_{1}-\xi_{2}|^q \,d\nu_{x,y}^2(\xi) \,dy \,dx =\int_{\dom} \int_{U^{2}} |\xi_{1}-\xi_{2}|^q \,d\nu_{x,x+h\mathbf{e}_j}^2(\xi) \,dx.
\end{equation}
Thus, combining~\eqref{eq:weisnoed} and~\eqref{eq:dctx},
\begin{align*}
&\int_0^T \int_{L^q(\dom)}\int_{\dom}\frac{|u(x)-u(x+h\mathbf{e}_j)|^q}{h^q} \,dx \,d\mu_t(u) \,dt \\
&\qquad=\lim_{\eps\to 0}\frac{1}{h^q}\int_0^T\int_{L^q(\dom)}\int_{\dom}\intavg_{B_\eps(x+h\mathbf{e}_j)}|u(x)-u(y)|^q \,dy \,dx \,d\mu_t(u) \,dt\\
&\qquad =\lim_{\eps\rightarrow 0} \frac{1}{h^q} \int_0^T\int_{\dom}\intavg_{B_\eps(x+h\mathbf{e}_j)}\int_{U^{2}}|\xi_{1}-\xi_{2}|^q \,d\nu_{x,y}^2(\xi) \,dy \,dx \,dt\\
&\qquad = \frac{1}{h^q}\int_0^T\int_{\dom}\int_{U^{2}} |\xi_{1}-\xi_{2}|^q \,d\nu_{x,x+h\mathbf{e}_j}^2(\xi)\,dx \,dt.
\end{align*}
Since this last identity holds for any $h>0$, and the limit is bounded by assumption \eqref{eq:aprioribd}, we can pass to the limit $h\to 0$ and obtain, using Lemma \ref{lem:gradientrepresentation},
\begin{align*}
\int_0^T\int_{L^q(\dom)}\int_{\dom}|\partial_j u|^q \,dx \,d\mu_t(u) \,dt & = \lim_{h\rightarrow 0} \int_0^T \int_{L^q(\dom)}\int_{\dom}\frac{|u(x)-u(x+h\mathbf{e}_j)|^q}{h^q} \,dx \,d\mu_t(u) \,dt\\
& = \lim_{h\rightarrow 0}\frac{1}{h^q}\int_0^T\int_{\dom}\int_{U^{2}} |\xi_{1}-\xi_{2}|^q \,d\nu_{x,x+h\mathbf{e}_j}^2(\xi)\,dx \,dt.
\end{align*}
Summing over $j=1,\dots, d$, we obtain the claim.
\end{proof}
\begin{lemma}
	[Reverse direction of Lemma \ref{lem:gradientrep2}]\label{lem:gradientrep3}
	Let $\cm=(\cm_t)_{0\leq t\leq T}=((\nu^1_t,\nu_t^2\dots))_{0\leq t\leq T}$ be a correlation measure as in~\cite[Definition 2.5]{FLM17} that satisfies
	\begin{equation}\label{eq:aprioribdcm}
	\lim_{h\rightarrow 0}\frac{1}{h^q}\int_0^T \int_{\dom}\sum_{j=1}^d\int_{U^2}|\xi_{1}-\xi_2|^q \,d\nu^2_{x,x+h\mathbf{e}_j}(\xi) \,dx \,dt\leq  C<\infty.
	\end{equation}
	Let $\dom$ be the $d$-dimensional torus. Then $\cm$ corresponds to a family of probability measures $\mu=(\mu_t)_{0\leq t\leq T}$ on $L^q(\dom)$ that satisfy
	\begin{equation}\label{eq:corgradientagain}
	\int_0^T\int_{L^q(\dom)} \norm{\Grad u}_{L^q(\dom)}^q \,d\mu_t(u)\,dt = \lim_{h\rightarrow 0}\frac{1}{h^q}\int_0^T \int_{\dom}\sum_{j=1}^d\int_{U^2}|\xi_{1}-\xi_2|^q \,d\nu^2_{x,x+h\mathbf{e}_j}(\xi) \,dx \,dt\leq \infty.
	\end{equation}%\todof{double check afterwards, whole domain or only compact subset?}
\end{lemma}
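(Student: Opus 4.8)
The plan is to use the equivalence theorem to transport the hypothesis \eqref{eq:aprioribdcm}, stated on the correlation-measure side, to the dual family $\mu=(\mu_t)$, and then to recognize the resulting finite-difference bound through the difference-quotient characterization of Sobolev regularity. First I would invoke~\cite[Theorem 2.7]{FLM17} (equivalently Theorem~\ref{thm:equivalencemeasures}) to produce the dual family $\mu=(\mu_t)_{0\le t\le T}$ of probability measures on $L^q(\dom)$. The essential preliminary point is the fixed-$h$ identity
\begin{equation*}
\frac{1}{h^q}\int_0^T\!\!\int_{\dom}\int_{U^2}|\xi_1-\xi_2|^q\,d\nu^2_{x,x+h\mathbf{e}_j}(\xi)\,dx\,dt=\int_0^T\!\!\int_{L^q(\dom)}\norm{D^h_j u}_{L^q(\dom)}^q\,d\mu_t(u)\,dt,
\end{equation*}
valid for each fixed $h>0$ and each direction $\mathbf{e}_j$. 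This is exactly the computation performed in the proof of Lemma~\ref{lem:gradientrep2}---the $\eps$-averaging combined with consistency and the diagonal continuity of $\cm$---and, crucially, it uses only the built-in $L^q$-bound and diagonal continuity of a correlation measure, \emph{not} any a priori gradient bound. Summing over $j=1,\dots,d$, assumption \eqref{eq:aprioribdcm} thus becomes
\begin{equation*}
\liminf_{h\to0}\int_0^T\!\!\int_{L^q(\dom)}\sum_{j=1}^d\norm{D^h_j u}_{L^q(\dom)}^q\,d\mu_t(u)\,dt\le C<\infty.
\end{equation*}

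Next I would convert this integrated bound into pointwise Sobolev regularity. Set $g_h(u):=\sum_{j=1}^d\norm{D^h_j u}_{L^q(\dom)}^q$, a continuous functional of $u\in L^q(\dom)$. The difference-quotient criterion asserts that whenever $\liminf_{h\to0}\norm{D^h_j u}_{L^q(\dom)}<\infty$, a subsequence of $D^h_j u$ converges weakly in $L^q(\dom)$ (here reflexivity, i.e.~$q>1$, is used); testing against smooth periodic functions and integrating by parts identifies the limit as the weak derivative $\partial_j u$, and weak lower semicontinuity of the norm gives $\norm{\partial_j u}_{L^q(\dom)}\le\liminf_{h\to0}\norm{D^h_j u}_{L^q(\dom)}$ (cf.~\cite[Section 5.8.2]{Evans2010}). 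Hence $\norm{\Grad u}_{L^q(\dom)}^q\le\liminf_{h\to0}g_h(u)$ for every $u$, with the convention that the left-hand side is $+\infty$ off $W^{1,q}(\dom)$. Fatou's lemma applied to the $d\mu_t\,dt$ integral then yields
\begin{equation*}
\int_0^T\!\!\int_{L^q(\dom)}\norm{\Grad u}_{L^q(\dom)}^q\,d\mu_t(u)\,dt\le\liminf_{h\to0}\int_0^T\!\!\int_{L^q(\dom)}g_h(u)\,d\mu_t(u)\,dt\le C,
\end{equation*}
so the left-hand side of \eqref{eq:corgradientagain} is finite and, in particular, $\mu_t$ is supported on $W^{1,q}(\dom)$ for a.e.~$t$.

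It then remains to turn this inequality into the asserted equality by passing the limit $h\to0$ through the integral in the opposite direction. For $u\in W^{1,q}(\mathbb{T}^d)$ two facts are available: the strong convergence $D^h_j u\to\partial_j u$ in $L^q(\dom)$ (the pointwise statement underlying Lemma~\ref{lem:gradientrepresentation} and Corollary~\ref{cor:bla}), which gives $g_h(u)\to\norm{\Grad u}_{L^q(\dom)}^q$; and the torus estimate $\norm{D^h_j u}_{L^q(\dom)}\le\norm{\partial_j u}_{L^q(\dom)}$, obtained by writing $D^h_j u=\frac{1}{h}\int_0^h\partial_j u(\cdot+s\mathbf{e}_j)\,ds$ and applying Minkowski's integral inequality together with translation invariance of Lebesgue measure on the torus. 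The latter furnishes the majorant $g_h(u)\le\norm{\Grad u}_{L^q(\dom)}^q$, which is $d\mu_t\,dt$-integrable by the previous paragraph, so dominated convergence gives
\begin{equation*}
\lim_{h\to0}\int_0^T\!\!\int_{L^q(\dom)}g_h(u)\,d\mu_t(u)\,dt=\int_0^T\!\!\int_{L^q(\dom)}\norm{\Grad u}_{L^q(\dom)}^q\,d\mu_t(u)\,dt.
\end{equation*}
Together with the fixed-$h$ identity of the first paragraph (summed over $j$), this is precisely \eqref{eq:corgradientagain}.

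The main obstacle is the step promoting the merely integrated finite-difference bound into genuine weak differentiability for $\mu_t$-a.e.~$u$: in contrast to Lemma~\ref{lem:gradientrep2}, here there is no a priori gradient bound to feed into Lemma~\ref{lem:gradientrepresentation}, so the difference-quotient characterization of $W^{1,q}$---and the reflexivity $q>1$ underpinning it---carries the argument. The case $q=1$ is genuinely different, as bounded difference quotients then only yield $BV$ regularity; this is why one stays in the reflexive range, which also covers the intended application $q=2$.
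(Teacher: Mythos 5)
Your proof is correct and, at the level of strategy, follows the route the paper intends: the paper's own proof of Lemma~\ref{lem:gradientrep3} consists of the single sentence that one should reverse the steps of Lemma~\ref{lem:gradientrep2} ``and check that all the steps are equivalences.'' What you supply---and what that one-line proof silently omits---is the one step that is \emph{not} a mere reversal: in the forward direction the a priori gradient bound \eqref{eq:aprioribd} is an input, whereas here the $W^{1,q}$-regularity of $\mu_t$-a.e.\ $u$ has to be extracted from the integrated finite-difference bound \eqref{eq:aprioribdcm}. Your handling of this is exactly what is needed and is carried out correctly: the fixed-$h$ identity via consistency and diagonal continuity (which, as you note, uses no gradient information), Fatou along a sequence $h_n\to0$ to get $\liminf_n\sum_j\norm{D^{h_n}_j u}_{L^q}^q<\infty$ for a.e.\ $u$, the difference-quotient characterization of $W^{1,q}$ via weak compactness and lower semicontinuity of the norm, and finally the Minkowski majorant $\norm{D^h_j u}_{L^q(\dom)}\le\norm{\partial_j u}_{L^q(\dom)}$ on the torus to run dominated convergence for the full limit. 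Your closing caveat is also substantive rather than cosmetic: the weak-compactness step genuinely requires $q>1$, and for $q=1$ the statement as printed fails (take $\mu_t$ a Dirac mass at an indicator function, for which the right-hand side of \eqref{eq:corgradientagain} is the finite total variation while the left-hand side is $+\infty$); the paper does not record this restriction, though only the case $q=2$ is ever used.
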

\begin{proof}
	The proof follows from the proof of Lemma \ref{lem:gradientrep2} by going in the reverse direction and checking that all the steps are equivalences.
\end{proof}
\begin{lemma}\label{lem:divconstraintlem}
	Let $\Phi:H^1(\dom)\to \R$ be a function of the form
	\begin{equation*}
	\Phi(u)= \phi\left((u,\Grad g_1),\dots, (u,\Grad g_k)\right),
	\end{equation*}
	where $\phi:\R^k\to\R$ is bounded and continuous with $\phi(0)=0$ and $g_i\in H^1(\dom)$. Then
	\begin{equation*}
	\int_{L^2(\dom)}\Phi(u) \,d\mu_t(u) =0,
	\end{equation*}
	for any $\mu$ supported on a bounded set of $L^2(\dom)$ corresponding to a family of correlation measures $\cm=(\nu_1,\nu_2,\dots)$ satisfying \eqref{eq:divfreeconst}, i.e.,
		\begin{equation*}
	\int_{\dom^{k}}\int_{U^{k}} \xi_1\otimes\dots\otimes\xi_{\ell}\otimes\alpha_{\ell+1}(\xi_{\ell+1})\otimes\dots \otimes \alpha_k(\xi_k) \,d\nu_{t,x}^k(\xi)\cdot\Grad_{x_1,\dots, x_\ell}\varphi(x) \,dx = 0,
	\end{equation*}
	where $\Grad_{x_1,\dots,x_\ell}=(\Grad_{x_{1}},\dots,\Grad_{x_\ell})^\top$, $1\leq \ell\leq k\in\N$, for all $\varphi\in H^1(\dom^k;U^{k-\ell})$, $\alpha_j\in C(U;U)$, with $\alpha_j(v)\leq C(1+|v|^2)$ for all $j=1,\dots, k$.
\end{lemma}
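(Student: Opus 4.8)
The plan is to deduce from the divergence constraint \eqref{eq:divfreeconst} that every linear functional $u\mapsto (u,\Grad g_i)$ vanishes $\mu_t$-almost everywhere; once this is established, we have $\Phi(u)=\phi(0,\dots,0)=0$ for $\mu_t$-a.e.\ $u$, so that $\int_{L^2(\dom)}\Phi(u)\,d\mu_t(u)=0$ follows immediately. Since there are only finitely many indices $i=1,\dots,k$ and each vanishing holds on a set of full measure, their intersection again has full measure, and it suffices to treat a single index $i$. This is cleaner than approximating $\phi$ by polynomials, which would also work but would require the Weierstrass theorem together with the probability normalization $\mu_t(L^2)=1$ to handle the constant term.

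The first step is to compute the \emph{second moment} of $(u,\Grad g_i)$, where throughout $(u,\Grad g_i)=\int_\dom u\cdot\Grad g_i\,dx$ is understood as the $L^2$-pairing (which is well defined for $u\in L^2(\dom)$ and $\Grad g_i\in L^2(\dom;U)$). Writing the square as a double integral over $\dom^2$ and invoking the equivalence theorem~\ref{thm:equivalencemeasures} — applicable because $\mu_t$ has bounded support, so that all relevant moments are finite, exactly as in the other moment computations of this appendix (cf.~the use of \cite[Theorem 2.7]{FLM17}) — one obtains
\[
\int_{L^2(\dom)} (u,\Grad g_i)^2\,d\mu_t(u) = \int_{\dom^2}\int_{U^2} (\xi_1\otimes\xi_2)\,d\nu_{t,x}^2(\xi) : \bigl(\Grad g_i(x_1)\otimes\Grad g_i(x_2)\bigr)\,dx .
\]

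The key observation is then that the right-hand side is an instance of the divergence constraint \eqref{eq:divfreeconst}. Taking $k=\ell=2$ (so that no factors $\alpha_j$ occur) and the scalar test function $\varphi(x_1,x_2)=g_i(x_1)\,g_i(x_2)\in H^1(\dom^2;U^0)=H^1(\dom^2;\R)$, the iterated gradient appearing in \eqref{eq:divfreeconst} satisfies $\Grad_{x_1,x_2}\varphi=\Grad g_i(x_1)\otimes\Grad g_i(x_2)$, whence \eqref{eq:divfreeconst} asserts precisely that the displayed integral equals $0$. Thus $\int_{L^2(\dom)}(u,\Grad g_i)^2\,d\mu_t(u)=0$, and since the integrand is nonnegative we conclude that $(u,\Grad g_i)=0$ for $\mu_t$-a.e.\ $u$. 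Intersecting over $i=1,\dots,k$ gives $\bigl((u,\Grad g_1),\dots,(u,\Grad g_k)\bigr)=0$ $\mu_t$-a.e., and since $\phi(0)=0$ the claim follows.

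The main point requiring care is the justification of the moment identity above: one must verify that the quadratic integrand $(\xi_1\cdot\Grad g_i(x_1))(\xi_2\cdot\Grad g_i(x_2))$, which does not lie in $C_0(U^2)$, is nonetheless covered by the duality \eqref{eq:corrmeastimeduality} under the bounded-support hypothesis (the same truncation/extension argument used elsewhere in this appendix applies), and that the iterated-gradient convention in \eqref{eq:divfreeconst} is correctly matched by the tensorized test function $g_i\otimes g_i\in H^1(\dom^2)$ — the latter being a genuine $H^1$ function on the product since $\|g_i(x_1)g_i(x_2)\|_{H^1(\dom^2)}$ is controlled by $\|g_i\|_{H^1(\dom)}^2$.
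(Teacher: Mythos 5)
Your proof is correct, but it takes a genuinely different route from the paper's. The paper approximates $\phi$ on the relevant compact set by polynomials $p_n$ with $p_n(0)=0$ (Weierstrass, using the bounded support), rewrites each monomial moment $\int_{L^2}(u,\Grad g_1)^{j_1}\cdots(u,\Grad g_k)^{j_k}\,d\mu_t(u)$ via the $|\underline{j}|$-point correlation marginal, kills each such term with \eqref{eq:divfreeconst}, and then passes to the limit $n\to\infty$. You instead use only the \emph{second} moment of each linear functional $(u,\Grad g_i)$, identify it as the $k=\ell=2$ instance of \eqref{eq:divfreeconst} with the tensor-product test function $g_i\otimes g_i$, and conclude that $(u,\Grad g_i)=0$ for $\mu_t$-a.e.\ $u$, so that $\Phi=\phi(0)=0$ a.e.\ and no approximation of $\phi$ is needed. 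What your version buys: it is shorter, it needs only the two-point marginal rather than arbitrarily high ones, and it isolates the genuinely useful content of the lemma --- namely that $\mu_t$ is concentrated on $\{u:(u,\Grad g)=0\}$ --- which is exactly how the lemma is deployed afterwards in the proof of Theorem~\ref{thm:FKsys2FT} (there the paper takes $\phi(\zeta)=|\zeta|^2$ and invokes Chebyshev to reach the same conclusion that you obtain directly). What the paper's version buys is uniformity of style with the rest of the appendix, where all identities are verified moment by moment. The two technical points you flag are real but are resolved exactly as in the paper's own argument: the duality of Theorem~\ref{thm:equivalencemeasures} extends to the quadratically growing integrand because $|L_g(u)|\leq\|u\|_{L^2}^2\|\Grad g_i\|_{L^2}^2\leq R^2\|g_i\|_{H^1}^2$ on the support of $\mu_t$, and the paper itself reads $\Grad_{x_1,\dots,x_\ell}$ as the iterated mixed gradient, so that for the product test function one indeed gets $\Grad g_i(x_1)\otimes\Grad g_i(x_2)$, precisely the tensor your moment identity requires.
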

\begin{proof} 
	Since $\phi$ is bounded and continuous, we can approximate it on every compact subset $B\subset \R^k$ by polynomials $p_n$ such that
	\begin{equation*}
	\norm{\phi-p_n}_{C(B)}\leq \frac{1}{n},
	\end{equation*}
	$n\in \N$. Since $\phi(0)=0$, we may assume that $p_n(0)=0$ and hence the constant term of the polynomial is zero. It is therefore of the form
	\begin{equation*}
	p_n(\zeta) = \sum_{|\underline{j}|=1}^{N_n} \beta_{\underline{j}} \zeta_1^{j_1}\dots \zeta_k^{j_k},
	\end{equation*}
	where $N_n\in\N$ large enough, $\zeta=(\zeta_1,\dots,\zeta_k)$, $\underline{j}=(j_1,\dots,j_k)$, $j_i\geq 0$, and $|\underline{j}| =j_1+\dots+j_k$.
	Hence
	\begin{equation*}
	\int_{L^2(\dom)}p_n((u,\Grad g_1),\dots,(u,\Grad g_k)) \,d\mu_t(u)
	= \sum_{|\underline{j}|=1}^{N_n} \beta_{\underline{j}}	\int_{L^2(\dom)}(u,\Grad g_1)^{j_1}\cdots(u,\Grad g_k)^{j_k} \,d\mu_t(u).
	\end{equation*}
	Consider one of the terms in the sum:
	\begin{align*}
	&	\int_{L^2(\dom)}(u,\Grad g_1)^{j_1}\cdots(u,\Grad g_k)^{j_k} \,d\mu_t(u) \\
	& = \int_{L^2(\dom)}\int_D u(x_{1})\cdot\Grad_{x_1} g_1(x_1) \,dx_1\cdots \int_D u(x_{j_1})\cdot\Grad_{x_{j_1}} g_1(x_{j_1}) \,dx_{j_1} \\
	&\qquad\cdot\int_D u(x_{j_1+1})\cdot\Grad_{x_{j_1+1}} g_2(x_{j_1+1}) \,dx_{j_1+1} \cdots  \int_D u(x_{|\underline{j}|})\cdot\Grad_{x_{|\underline{j}|}} g_k(x_{|\underline{j}|}) d x_{|\underline{j}|}  \,d\mu_t(u)\\
	& = \int_{D^{|\underline{j}|}}\int_{(\R^{N})^{|\underline{j}|}}\xi_1\otimes\dots\otimes \xi_{|\underline{j}|} \,d\nu^{|\underline{j}|}_x(\xi) \Grad_{x_{1}}g_1(x_1)\dots \Grad_{x_{|\underline{j}|}} g_k(x_{|\underline{j}|}) \,dx =0,
	\end{align*}
	where the last equality follows from \eqref{eq:divfreeconst}. Hence
	\begin{equation*}
	\int_{L^2(\dom)}p_n((u,\Grad g_1),\dots,(u,\Grad g_k)) \,d\mu_t(u)
	= 0.
	\end{equation*}
	By the approximation property of the $p_n$, after passing $n\to\infty$, we obtain the result for arbitrary continuous $\phi$.
\end{proof}
\section{An identity by T.\ Drivas for second order structure functions}\label{sec:appendixC}
We present the proof of the following lemma discovered by Theo Drivas~\cite[Lemma 1]{Drivas2021} (Lemma~\ref{lem:Drivas}):
\begin{lemma}[Weak anisotropy]
	Let $\mu_t$ be a statistical solution of the Navier--Stokes equation. Then $\mu$ satisfies
	\begin{equation}\label{eq:WA}
	\begin{split}
	&3\int_0^T\int_{\dom}\int_{\hdiv}\fint_{\partial B_r(0)} (\delta_{rn} u\cdot n)^2dS(n)\,dx \,d\mu_t(u)\,dt\\
	&=\int_0^T\int_{\dom}\int_{\hdiv}\fint_{B_r(0)} |\delta_\ell u(x)|^2 d\ell \,dx \,d\mu_t(u)\,dt .
	\end{split}
	\end{equation}
\end{lemma}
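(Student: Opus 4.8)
The plan is to integrate out $x$, $u$ and $t$ first, thereby reducing the claim to a purely geometric identity for the (integrated) second‑order structure‑function tensor, and then to extract the factor $3$ from the divergence theorem together with the incompressibility of $u$.

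\textbf{Reduction.} Since both integrands are nonnegative, Tonelli's theorem lets me move $\int_0^T\!\int_{\hdiv}\!\int_\dom(\cdot)\,dx\,d\mu_t(u)\,dt$ inside the spherical and ball averages. I define the symmetric tensor
\begin{equation*}
\mathcal{D}_{ij}(h) \coloneqq \int_0^T\!\int_{\hdiv}\!\int_\dom \delta_h u^i(x)\,\delta_h u^j(x)\,dx\,d\mu_t(u)\,dt,\qquad h\in\R^3,
\end{equation*}
where $\delta_h u = u(\cdot+h)-u(\cdot)$. Parametrising $\partial B_r(0)$ by $h=rn$ with $n\in\mathbb{S}^2$ (so that $\fint_{\partial B_r(0)}\,dS(n)=\fint_{\mathbb{S}^2}\,d\sigma(n)$ once the surface weight $r^2$ cancels in the normalised average), the left‑hand side of \eqref{eq:WA} equals $3\fint_{\mathbb{S}^2}\sum_{ij}n^i n^j\,\mathcal{D}_{ij}(rn)\,d\sigma(n)$, while the right‑hand side equals $\fint_{B_r(0)}\operatorname{tr}\mathcal{D}(\ell)\,d\ell$. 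It therefore suffices to prove the kinematic identity $\fint_{B_r(0)}\operatorname{tr}\mathcal{D}(\ell)\,d\ell = 3\fint_{\mathbb{S}^2}\sum_{ij}n^i n^j\,\mathcal{D}_{ij}(rn)\,d\sigma(n)$.

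\textbf{Incompressibility of the tensor.} The crucial ingredient is that $\mathcal{D}$ is divergence‑free in $h$, i.e.\ $\sum_j\partial_{h^j}\mathcal{D}_{ij}(h)=0$ for each $i$. I would verify this for a fixed divergence‑free $u$: differentiating under the integral and using $\partial_{h^j}\delta_h u^i(x)=\partial_j u^i(x+h)$ gives
\begin{equation*}
\sum_j\partial_{h^j}\!\int_\dom \delta_h u^i\,\delta_h u^j\,dx = \int_\dom\Big(\sum_j\partial_j u^i(x+h)\,\delta_h u^j(x) + \delta_h u^i(x)\,(\Div u)(x+h)\Big)\,dx,
\end{equation*}
the last term vanishing by incompressibility. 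In the remaining term $\sum_j\partial_j u^i(x+h)\,u^j(x+h)$ integrates to $\int_\dom(u\cdot\Grad)u^i\,dx=0$ on the torus, while an integration by parts in $x$ turns $-\int_\dom\sum_j\partial_j u^i(x+h)\,u^j(x)\,dx$ into $\int_\dom u^i(x+h)\,(\Div u)(x)\,dx=0$. Integrating over $\mu_t$ and $t$ then yields $\Div_h\mathcal{D}=0$; the only analytic input is that $u\in H^1$ for $\mu_t$‑a.e.\ $u$ and a.e.\ $t$, which holds by the energy inequality \eqref{eq:energystatstrong} (cf.\ Remark~\ref{rem:integrability}) and renders $\mathcal{D}\in C^1$.

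\textbf{Divergence theorem and normalisation.} Using symmetry and $\Div_h\mathcal{D}=0$, the vector field $W^i(h)\coloneqq\sum_j\mathcal{D}_{ij}(h)h^j$ satisfies $\Div_h W = \sum_j\big(\sum_i\partial_{h^i}\mathcal{D}_{ij}\big)h^j + \operatorname{tr}\mathcal{D} = \operatorname{tr}\mathcal{D}$, the first sum vanishing because $\mathcal{D}_{ij}=\mathcal{D}_{ji}$ and $\Div_h\mathcal{D}=0$. Applying the divergence theorem on $B_r(0)$, whose outward normal at $h$ is $n=h/r$ with $h^j=rn^j$, yields
\begin{equation*}
\int_{B_r(0)}\operatorname{tr}\mathcal{D}(\ell)\,d\ell = \int_{\partial B_r(0)}\sum_{ij}\mathcal{D}_{ij}(h)\,h^j n^i\,dS(h) = r^{3}\int_{\mathbb{S}^2}\sum_{ij}n^i n^j\,\mathcal{D}_{ij}(rn)\,d\sigma(n).
\end{equation*}
Dividing by $|B_r(0)|=\tfrac{r^3}{3}|\mathbb{S}^2|$ and recalling $\fint_{\mathbb{S}^2}=\tfrac{1}{|\mathbb{S}^2|}\int_{\mathbb{S}^2}$ produces exactly the constant $3$, which closes the identity and hence \eqref{eq:WA}.

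\textbf{Main obstacle.} The algebra is elementary once $\Div_h\mathcal{D}=0$ is in hand; the delicate point is justifying the differentiation under the integral and the integration by parts in $x$ at the level of $\mu_t$‑a.e.\ $H^1$ fields. For this I would invoke the uniform $L^2_tH^1_x$ bound coming from \eqref{eq:energystatstrong} together with dominated convergence and shift‑continuity of translates in $L^2$ to secure $\mathcal{D}\in C^1$ and the vanishing boundary contributions. Reassuringly, the proof uses only incompressibility and spatial homogeneity (periodicity), not the Navier--Stokes dynamics, exactly as one expects for a purely kinematic anisotropy relation.
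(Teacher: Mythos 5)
Your proof is correct, and it takes a genuinely different route from the paper's. The paper derives the identity \emph{through the dynamics}: it tests the $k=1$ Friedman--Keller equation \eqref{eq:FKsyspressure} with the shifted, mollified test function $\ell\,\omega_\eta(x-\cdot)$, subtracts the resulting identities at $x$ and $x+\ell$, integrates over $x$, and checks that every term coming from the time derivative, the viscosity, and the pressure vanishes by periodicity; what survives is the convective contribution $E_2$, whose decomposition $E_{2,1}+E_{2,2}+E_{2,3}$ and subsequent ball-average via Gauss' theorem is precisely your computation $\Div_h W=\operatorname{tr}\mathcal D$ carried out pathwise on the mollified field $u_\eta$, with $\eta\to 0$ at the end. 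You instead bypass the PDE entirely and prove the statement as a purely kinematic fact about the integrated two-point correlation tensor $\mathcal D_{ij}(h)$: incompressibility plus periodicity give $\Div_h\mathcal D=0$, and the divergence theorem applied to $W^i=\mathcal D_{ij}h^j$ produces the factor $3$ from $|B_r(0)|=\tfrac{r}{3}|\partial B_r(0)|$. Your observation that only $\Div u=0$ and translation invariance are used --- not the Navier--Stokes evolution --- is accurate and makes the mechanism more transparent than in the paper, where the cancellation of the dynamical terms obscures this. The trade-offs: your direct differentiation of $\mathcal D$ under the integral requires $\mu_t$ to be carried by $H^1$ with $\int_0^T\!\int |u|_{H^1}^2\,d\mu_t\,dt<\infty$, which for fixed $\vis>0$ does follow from \eqref{eq:energystatstrong} (with constant $E_0/(2\vis)$, so not uniformly in $\vis$ --- harmless here, since the lemma is applied at fixed viscosity before passing to the limit); the paper's mollification $u_\eta$ avoids invoking this regularity and only needs the $L^2$ bound to pass $\eta\to0$, at the cost of a longer bookkeeping of vanishing terms. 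Both arguments are sound; yours is shorter and isolates the structural reason the identity holds.
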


\begin{proof}
We let $\omega\in C^\infty_c(\R^d)$, $\omega\geq 0$, compactly supported in $B_1(0)$ and $\omega_\eta(y)\coloneqq\eta^{-d}\omega(y/\eta)$ for $\eta>0$ its rescaled version. For some fixed $\ell\in \R^3$ and $x\in D$, we let $\phi(t,y) = \ell \omega_\eta(x-y)$. We use this as a test function in~\eqref{eq:FKsyspressure} for $k=1$. Using the Einstein summation convention we then have
\begin{equation}
\begin{split}
&\int_0^T\int_{\hdiv}\int_{\dom} u(y)\cdot \ell \omega_\eta(x-y)\theta'(t) \,dy \,d\mu_t(u)\,dt\\
&-\int_0^T\int_{\hdiv}\int_{\dom} u^i(y)u^j(y)\ell^j \partial_{y^i}\omega_\eta(x-y)\theta(t)\,dy \,d\mu_t(u)\,dt \\
&+ \vis \int_0^T\int_{\hdiv} \int_{\dom} u^i(y)\ell^i\Delta_y\omega_\eta(x-y)\theta(t) \,dy\,d\mu_t(u) \,dt \\
&+ \int_0^T\int_{\hdiv}\int_{\dom} u^i(y)u^j(y) \partial_{y^i}\partial_{y^j}\psi_{\omega}(x-y)\theta(t)\,dy \,d\mu_t(u)\,dt=0.
\end{split}
\end{equation}
where the last term comes from the fact that the test function is not divergence free, c.f.\ Remark~\ref{rem:lerayprojection}. We define $u_\eta(x)\coloneqq\int_{\dom} u(y)\omega_{\eta}(x-y) \,dy$ and rewrite the above in terms of $u_\eta$,
\begin{equation}
\begin{split}
&\int_0^T\int_{\hdiv} u_\eta(x)\cdot \ell \theta'(t) \,d\mu_t(u)\,dt+\int_0^T\int_{\hdiv}u^i_\eta(x)\partial_{x^i} u^j_\eta(x)\ell^j \,d\mu_t(u)\theta(t)\,dt \\
&+ \vis \int_0^T\int_{\hdiv} \Delta_x u^i_\eta(x)\ell^i\theta(t) \,d\mu_t(u) \,dt \\
= &-\int_0^T\int_{\hdiv}\int_{\dom} u^i(y)u^j(y) \partial_{y^i}\partial_{y^j}\psi_{\omega}(x-y)\theta(t)\,dy \,d\mu_t(u)\,dt\\
&+\int_0^T\int_{\hdiv}[u^i_\eta(x)\partial_{x^i} u^j_\eta(x)- \partial_{x^i}((u^iu^j)_{\eta}(x))]\ell^j \,d\mu_t(u)\theta(t)\,dt .
\end{split}
\end{equation}
We subtract this identity from the same identity at $x+\ell$ (and use the notation $\delta_\ell u=u(x+\ell)-u(x)$):
\begin{equation}\label{eq:lalala}
\begin{split}
&\int_0^T\int_{\hdiv} \delta_\ell u_\eta(x)\cdot \ell \theta'(t) \,d\mu_t(u)\,dt\\
&+\int_0^T\int_{\hdiv}[u^i_\eta(x+\ell)\partial_{x^i} u^j_\eta(x+\ell)-u^i_\eta(x)\partial_{x^i} u^j_\eta(x)]\ell^j \,d\mu_t(u)\theta(t)\,dt \\
&+ \vis \int_0^T\int_{\hdiv} \Delta_x\delta_\ell u^i_\eta(x)\ell^i\theta(t) \,d\mu_t(u) \,dt \\
= &-\int_0^T\int_{\hdiv}\int_{\dom} u^i(y)u^j(y) \partial_{y^i}\partial_{y^j}\psi_{\omega}(x+\ell-y)\theta(t)\,dy \,d\mu_t(u)\,dt\\
&+\int_0^T\int_{\hdiv}\int_{\dom} u^i(y)u^j(y) \partial_{y^i}\partial_{y^j}\psi_{\omega}(x-y)\theta(t)\,dy \,d\mu_t(u)\,dt\\
&+\int_0^T\int_{\hdiv}[u^i_\eta(x+\ell)\partial_{x^i} u^j_\eta(x+\ell)- \partial_{x^i}((u^iu^j)_{\eta}(x+\ell))]\ell^j \,d\mu_t(u)\theta(t)\,dt \\
&-\int_0^T\int_{\hdiv}[u^i_\eta(x)\partial_{x^i} u^j_\eta(x)- \partial_{x^i}((u^iu^j)_{\eta}(x))]\ell^j \,d\mu_t(u)\theta(t)\,dt .
\end{split}
\end{equation}
Denote the above terms as $E_1,E_2,\dots,E_7$, which are all functions of $x$. We now integrate~\eqref{eq:lalala} over $x\in\dom$ and consider each term separately. As we will see, all of the terms in \eqref{eq:lalala} vanish. We obtain
\begin{equation}
\int_D E_1\,dx = \int_0^T\int_{\hdiv}\int_{\dom} \delta_\ell u_\eta(x)\cdot \ell \,dx\theta'(t) \,d\mu_t(u)\,dt = 0,
\end{equation}
after changing the integration from $x$ to $x+\ell$ in one of the integrals. Next,
\begin{equation}
\int_D E_3\,dx = \vis \int_0^T\int_{\hdiv}\int_{\dom} \Delta_x u^i_\eta(x)\ell^i \,dx\theta(t) \,d\mu_t(u) \,dt =0,
\end{equation}
again because of periodic boundary conditions. We also get
\begin{equation}
\begin{split}
\int_\dom E_4\,dx &=\int_0^T\int_{\hdiv}\iint_{\dom^2} u^i(y)u^j(y) \partial_{y^i}\partial_{y^j}\psi_{\omega}(x+\ell-y)\theta(t)\,dy\,dx  \,d\mu_t(u)\,dt\\
  &=\int_0^T\int_{\hdiv}\iint_{\dom^2} u^i(y)u^j(y) \partial_{x^i}\partial_{x^j}\psi_{\omega}(x+\ell-y)\theta(t)\,dy\,dx  \,d\mu_t(u)\,dt\\
 &=\int_0^T\int_{\hdiv}\int_{\dom} u^i(y)u^j(y) \int_{\dom}\partial_{x^i}\partial_{x^j}\psi_{\omega}(x+\ell-y)\,dx\theta(t)\,dy  \,d\mu_t(u)\,dt=0,
\end{split}
\end{equation}
and similarly, $\int_\dom E_5\,dx = 0$. We also have
\begin{equation}
\begin{split}
\int_\dom E_6\,dx &=\int_0^T\int_{\hdiv}\int_{\dom}[u^i_\eta(x+\ell)\partial_{x^i} u^j_\eta(x+\ell)- \partial_{x^i}(u^iu^j)_{\eta}(x+\ell)]\ell^j\,dx  \,d\mu_t(u)\theta(t)\,dt \\
&=\int_0^T\int_{\hdiv}\int_{\dom}\partial_{x^i}\big[\big(u^i_\eta(x+\ell) u^j_\eta(x+\ell)\big)- (u^iu^j)_{\eta}(x+\ell)\big]\ell^j\,dx  \,d\mu_t(u)\theta(t)\,dt \\
&= 0
\end{split}
\end{equation}
from the fact that $u$ is divergence free and the identity $\int_D \partial_{x^i} v\,dx = 0$ for every $v\in C^1(D)$, which follows from the periodic boundary conditions. In the same manner we obtain $\int_\dom E_7\,dx = 0$. Since $E_2$ is the only remaining term in \eqref{eq:lalala}, we obtain also
\begin{equation}\label{eq:lalalala}
\int_\dom E_2\,dx = 0.
\end{equation}

We next rewrite $E_2$ as
\begin{equation}
\begin{split}
E_2&=\int_0^T\int_{\hdiv}\big[u^i_\eta(x+\ell)\partial_{x^i} u^j_\eta(x+\ell)-u^i_\eta(x)\partial_{x^i} u^j_\eta(x)\big]\ell^j \,d\mu_t(u)\theta(t)\,dt \\
& =\int_0^T\int_{\hdiv}\big[u^i_\eta(x)\partial_{x^i} \delta_\ell u^j_\eta(x) + \delta_\ell u^i_\eta(x)\partial_{x^i} u^j_\eta(x+\ell)\big]\ell^j \,d\mu_t(u)\theta(t)\,dt \\
& =\int_0^T\int_{\hdiv}\big[u^i_\eta(x)\partial_{x^i} \delta_\ell u^j_\eta(x) + \delta_\ell u^i_\eta(x)\partial_{\ell^i} u^j_\eta(x+\ell)\big]\ell^j \,d\mu_t(u)\theta(t)\,dt \\
& =\int_0^T\int_{\hdiv}\big[u^i_\eta(x)\partial_{x^i} \delta_\ell u^j_\eta(x) + \delta_\ell u^i_\eta(x)\partial_{\ell^i} \delta_\ell u^j_\eta(x)\big]\ell^j \,d\mu_t(u)\theta(t)\,dt \\
& =\int_0^T\int_{\hdiv}\big[u^i_\eta(x)\partial_{x^i} \delta_\ell u^j_\eta(x)\ell^j + \delta_\ell u^i_\eta(x)\partial_{\ell^i}\big(\delta_\ell u^j_\eta(x)\ell^j\big)-\delta_\ell u^i_\eta(x) \delta_\ell u^j_\eta(x)\partial_{\ell^i}\ell^j\big] \,d\mu_t(u)\theta(t)\,dt \\
& =\int_0^T\int_{\hdiv}\big[u^i_\eta(x)\partial_{x^i} \delta_\ell u^j_\eta(x)\ell^j + \delta_\ell u^i_\eta(x)\partial_{\ell^i}\big(\delta_\ell u^j_\eta(x)\ell^j\big)-\big|\delta_\ell u_\eta(x)\big|^2\big] \,d\mu_t(u)\theta(t)\,dt\\
&= E_{2,1}+E_{2,2}+E_{2,3}.
\end{split}
\end{equation}
Integrating these terms, we have
\begin{equation}
\begin{split}
\int_\dom E_{2,1}\,dx &=\int_0^T\int_{\hdiv}\int_{\dom}u^i_\eta(x)\partial_{x^i} \delta_\ell u^j_\eta(x)\ell^j \,dx\,d\mu_t(u)\theta(t) \,dt\\
& = \int_0^T\int_{\hdiv}\int_{\dom}\partial_{x^i}\big(u^i_\eta(x) \delta_\ell u^j_\eta(x)\ell^j\big) \,dx\,d\mu_t(u)\theta(t) \,dt=0,
\end{split}\end{equation}
where the first equation follows because $\mu$ is supported on divergence free functions. Again using the divergence free property we can write
\begin{equation}
\int_\dom E_{2,2} \,dx = \int_0^T\int_{\dom}\int_{\hdiv} \partial_{\ell^i}\big(\delta_\ell u^i_\eta(x) \delta_\ell u^j_\eta(x)\ell^j\big) \,d\mu_t(u)\theta(t)\,dx \,dt.
\end{equation}
Now we integrate in $\ell$ over a ball of radius $r$, take $\theta(t)\equiv 1$, and use Gauss' divergence theorem:
\begin{equation}
\begin{split}
\fint_{B_r(0)} E_{2,2}\,d\ell &=\int_0^T\int_{\dom}\int_{\hdiv}\fint_{B_r(0)} \partial_{\ell^i}\big(\delta_\ell u^i_\eta(x) \delta_\ell u^j_\eta(x)\ell^j\big) d\ell \,d\mu_t(u)\,dx\theta(t)\,dt\\
&=3\int_0^T\int_{\hdiv}\int_{\dom}\fint_{\partial B_r(0)} \delta_{rn} u^i_\eta(x)n^i \delta_{rn} u^j_\eta(x)n^j dS(n)\,dx \,d\mu_t(u)\,dt\\
&=3\int_0^T\int_{\hdiv}\int_{\dom}\fint_{\partial B_r(0)} \big(\delta_{rn} u_\eta\cdot n\big)^2 dS(n)\,dx \,d\mu_t(u)\,dt,
\end{split}
\end{equation}
the factor 3 coming from the relation $|B_r(0)| = 3^{-1}r|\partial B_r(0)|$. Since $\mu$ has bounded support in $L^2$, we can let $\eta\to 0$ in \eqref{eq:lalalala} and obtain
\begin{equation}
\begin{split}
&3\int_0^T\int_{\dom}\int_{\hdiv}\fint_{\partial B_r(0)} (\delta_{rn} u\cdot n)^2dS(n)\,dx \,d\mu_t(u)\,dt\\
&=\int_0^T\int_{\dom}\int_{\hdiv}\fint_{B_r(0)} |\delta_\ell u(x)|^2 d\ell \,dx \,d\mu_t(u)\,dt,
\end{split}
\end{equation}
which is~\eqref{eq:WA}.
\end{proof}
\bibliographystyle{abbrv}
\bibliography{GCMbib}

\end{document}